\documentclass[journal, final, 10pt, twocolumn]{IEEEtran}




\usepackage{amsmath, amssymb, amsthm}
\usepackage{graphicx, graphics, epsfig, color}
\usepackage{adjustbox}
\usepackage{subfigure}
\usepackage{tikz, pgfplots}
\usepackage{algorithm, algorithmic}
\usepackage{float}
\usepackage{multirow}
\usepackage{cuted, flushend}
\usepackage{midfloat}
\usepackage{bm}
\usepackage{fancyhdr}
\usepackage{enumerate}
\usepackage[numbers,square,sort&compress]{natbib}

\pgfplotsset{compat=1.4}


\newtheorem{theorem}{Theorem}

\newtheorem{lemma}{Lemma}
\newtheorem{remark}{Remark}
\newtheorem{corollary}{Corollary}
\newtheorem{definition}{Definition}
\newtheorem{assumption}{Assumption}

\newcommand{\toas}{\xrightarrow{\rm{a.s.}}}


\def\stud{{\rm S}}
\def\hub{{\rm H}}

\DeclareMathOperator{\tr}{{\rm tr}}

\def\c{c_n}

\def\C{\mathbf{C}_N}
\def\D{\mathbf{D}_N}
\def\E{\mathbf{E}_N}
\def\I{\mathbf{I}_N}

\def\Chalf{\mathbf{C}_N^{1/2}}
\def\Dhalf{\mathbf{D}_N^{1/2}}
\def\Chalfm{\mathbf{C}_N^{-1/2}}

\def\Chat{\mathbf{\hat C}_N}

\def\Chatrx{\mathbf{\hat C}_{(x_i)}}
\def\Chatra{\mathbf{\hat C}_{(a_i)}}

\def\Shat{\mathbf{\hat S}_N}
\def\ShatRnd{\mathbf{\hat S}_N^{\rm R}}

\def\An{{\mathbf{F}_N}}
\def\Ani{{\mathbf{F}_{N,(i)}}}
\def\AnI{{\mathbf{F}_N^{-1}}}
\def\AniI{{\mathbf{F}_{N,(i)}^{-1}}}
\def\Ex{{\rm E}_{\x_i}}

\def\Bni{{\mathbf{G}_{N,(i)}}}
\def\BniI{{\mathbf{G}_{N,(i)}^{-1}}}

\def\A{\mathbf{A}_N}
\def\B{\mathbf{B}_N}
\def\R{\mathbf{R}}
\def\H{\mathbf{H}}
\def\0{\mathbf{0}}
\def\T{\mathbf{T}}
\def\Q{\mathbf{Q}}
\def\Y{\mathbf{Y}}
\def\Z{\mathbf{Z}}

\def\x{\mathbf{x}}
\def\y{\mathbf{y}}
\def\a{\mathbf{a}}

\def\h{\mathbf{h}}

\def\gammaEq{{\gamma_n}}

\def\gammaEqRnd{{\gamma^{\rm R}_n}}
\def\alphaEqRnd{{\alpha^{\rm R}_n}}

\def\alphaEqi{{\alpha_{i,n}}}
\def\alphaEqj{{\alpha_{j,n}}}

\def\eps{\varepsilon_n}

\def\rnd{{\rm{R}}}
\def\deltaf{{\beta}}
\def\deltap{{\delta}}
\def\ind{{\mathbf{1}}}

\def\O{{\mathcal{O}}}


\title{Large Dimensional Analysis of Robust M-Estimators of Covariance with Outliers}

\author{David Morales-Jimenez$^\star$, Romain Couillet$^\dagger$, Matthew R. McKay$^\star$

\thanks{$^\star$D. Morales-Jimenez and M. R. McKay are with Dept. Electronic and Computer Engineering, Hong Kong University of Science and Technology, Clear Water Bay, Kowloon (Hong Kong). (e-mail:\{eedmorales,eemckay\}@ust.hk)}

\thanks{$^\dagger$R. Couillet is with CNRS-CentraleSup\'elec-Universit\'e Paris-Sud, 91192 Gif-sur-Yvette, France (romain.couillet@centralesupelec.fr). }

\thanks{The work of D. Morales-Jimenez and M. R. McKay was supported by the Hong Kong Research Grants Council under grant number 16206914. Couillet's work is supported by the ERC MORE EC--120133.}

\thanks{This work has been submitted to the IEEE for possible publication. Copyright may be transferred without notice, after which this version may no longer be accessible}
}


\begin{document}

\date{\today}

\maketitle

\begin{abstract}
	A large dimensional characterization of robust \mbox{M-estimators} of covariance (or scatter) is provided under the assumption that the dataset comprises independent (essentially Gaussian) legitimate samples as well as arbitrary deterministic samples, referred to as outliers. Building upon recent random matrix advances in the area of robust statistics, we specifically show that the so-called Maronna M-estimator of scatter asymptotically behaves similar to well-known random matrices when the population and sample sizes grow together to infinity. The introduction of outliers leads the robust estimator to behave asymptotically as the weighted sum of the sample outer products, with a constant weight for all legitimate samples and different weights for the outliers. A fine analysis of this structure reveals importantly that the propensity of the M-estimator to attenuate (or enhance) the impact of outliers is mostly dictated by the alignment of the outliers with the inverse population covariance matrix of the legitimate samples. Thus, robust M-estimators can bring substantial benefits over more simplistic estimators such as the per-sample normalized version of the sample covariance matrix, which is not capable of differentiating the outlying samples.
The analysis shows that, within the class of Maronna's estimators of scatter, the Huber estimator is most favorable for rejecting outliers. On the contrary, estimators more similar to Tyler's scale invariant estimator (often preferred in the literature) run the risk of inadvertently enhancing some outliers.
\end{abstract}
\begin{keywords}
Robust statistics, M-estimation, outliers.
\end{keywords}

\section{Introduction}
\label{sec:intro}

The growing momentum of big data applications along with the recent advances in large dimensional random matrix theory have raised much interest for problems in statistics and signal processing under the assumption of large but similar population dimension $N$ and sample size $n$. 
Due to the intrinsic complexity of large dimensional random matrix theory, as compared to classical statistics where $N$ is fixed and $n\to\infty$, most of the classical applications were concerned with sample covariance matrix (SCM) based methods (as in e.g., \cite{Bianchi2009,Nadler2010} for source detection or \cite{Mestre2008} for subspace estimation). Only recently have other random matrix structures started to be explored which are adequate to deal with more advanced statistical problems; see for instance \cite{BIC08} on Toeplitz random matrix structures, or \cite{ELK10} on kernel random matrices. Of particular interest is the structure of robust M-estimators of covariance (or scatter), which have very recently come to a better understanding in the large dimensional regime and is the focus of the present work. 


The field of robust M-estimation, born with the early works of Huber \cite{Huber1964}, roughly consists in improving classical Gaussian maximum-likelihood estimators, such as the sample mean or SCM, into estimators that (unlike the classical estimators) are resilient to both the possibly heavy-tailed nature of the observed data or the presence of outliers in the dataset. Assuming observation data of known zero mean, robust estimators of the population covariance matrix, referred to as robust \mbox{M-estimators} of scatter, were proposed successively in \cite{Huber1964} for data composed of a majority of independent Gaussian samples and a few outliers and then in \cite{Maronna1976} and \cite{Tyler1987} for elliptically distributed or arbitrary scaled Gaussian data.

But the analysis for each given $N,n$ of the aforementioned robust estimators of scatter, which often take the form of solutions of implicit equations, is in general intractable. In a series of recent works \cite{Couillet2013,COU14,COU14d,ZHA14} (see also \cite{YAN14,COU14c} for applications), this limitation was alleviated by considering the random matrix regime where both $N,n$ are large and commensurable. These works have shown that in this regime several classes of robust estimators of scatter (Maronna, Tyler, and regularized Tyler) behave similar to simpler and explicit random matrix models, which are fully understandable via (now standard) random matrix methods. Nonetheless, all these works were pursued under the assumption that the input data are independent and follow a zero-mean elliptical distribution. One of the salient outcomes of these works is that, under elliptical inputs, the Tyler and regularized Tyler estimators asymptotically behave similar to the SCM of the normalized data,\footnote{This being valid up to second-order fluctuations \cite{COU14d}.} henceforth referred to as the normalized SCM, and therefore do not provide any apparent gain in robustness versus simpler sample covariance estimators. 

This fact, however, fundamentally disregards the important role of robust estimators as arbitrary outlier rejectors. In the present work, 
we shall consider data comprising both legitimate data (that are essentially independent Gaussian samples) and a certain (a priori unknown) amount of arbitrary deterministic outliers. Focusing our attention specifically to the (larger) class of Maronna's M-estimators of scatter, similar to all of the aforementioned works and following the approach in \cite{Couillet2013}, we will show that in this setting the robust estimator of scatter behaves similar for large $N,n$ to an explicit and easily understood random matrix. But it will appear, unlike in \cite{Couillet2013,COU14,COU14d,ZHA14}, that this random matrix no longer behaves similar to the normalized SCM. Our main finding is that, under suitable conditions, the robust estimator of scatter manages to attenuate (to some extent) the impact of the deterministic outliers, which the SCM (or normalized SCM) may not be capable of. Calling $\C$ the population covariance matrix of the legitimate data, $\a_i\in\mathbb{C}^N$ the $i$-th outlier, and assuming the number of outliers is small compared to $n$, it will be demonstrated that the rejection power of the robust estimator of scatter is monotonically related to the quadratic form $\a_i^\dagger\C^{-1}\a_i$. This shows that, if $\C$ is (invertible but) essentially of low rank, $\a_i^\dagger\C^{-1}\a_i$ can take large values and thus $\a_i$ is likely to be suppressed. If $\a_i^\dagger\C^{-1}\a_i$ is quite small instead, an inverse effect of outlier enhancement may appear that needs be controlled by an appropriate choice of estimator within Maronna's class. We shall show that such an estimator should resemble the original Huber estimator from \cite{Huber1964} and substantially differ from the Tyler estimator.

In the remainder of the article, we provide a rigorous statement of our main results. The problem at hand is discussed in Section~\ref{sec:problem} and our main results introduced in Section~\ref{sec:main}, all proofs being deferred to the appendices. Special attention will then be made on the analytically tractable cases where the number of outliers is either small (Section~\ref{sec:few_outliers}) or random i.i.d.\@ (Section~\ref{sec:random}). Concluding remarks are provided in Section~\ref{sec:conclusion}.    

\medskip

\textit{Notations:} The superscript $(\cdot)^\dagger$ stands for Hermitian transpose in the complex case or transpose in the real case. The norm $\|\cdot\|$ is the spectral norm for matrices and the Euclidean norm for vectors. The Dirac measure at point $x$ is denoted $\delta_x$ and $\ind_{A}$ stands for the indicator function with $A$ the corresponding inclusion event. The imaginary unit is denoted $\imath = \sqrt{-1}$ and $\Im [ \cdot ]$ stands for the imaginary part. The set $\mathbb{R}^+$ is defined as $\{x: x \geq 0 \}$ and $\mathbb{C}^+=\{z\in\mathbb{C},~\Im[z]>0\}$. The support of a distribution function $F$ is denoted by ${\rm{Supp}}(F)$. The ordered eigenvalues of a Hermitian (or symmetric) matrix ${\bf X}$ of size $N \times N$ are denoted $\lambda_1 ({\bf X}) \leq \ldots \leq \lambda_N ({\bf X})$. For $\mathbf{A}, \mathbf{B}$ Hermitian, $\mathbf{A} \succ \mathbf{B}$ means that $\mathbf{A} - \mathbf{B}$ is positive definite. The notation ${\rm diag}({\bf X})$ stands for the diagonal matrix composed of the diagonal elements of matrix $\bf X$ and ${\rm diag}({\bf x})$ the diagonal matrix composed of the elements of vector $\bf x$ on the diagonal. The arrow $\toas$ designates almost sure convergence and $\Rightarrow$ stands for weak convergence.

\section{System Model and Motivation}
\label{sec:problem}

For $\eps\in \mathbb{R}$ such that $n\eps\in \{1,\ldots,n\}$, let
\begin{align*}
	\Y = \left[ \y_1, \ldots, \y_{(1-\varepsilon_n)n}, \a_1, \ldots , \a_{\varepsilon_n  n} \right] \in \mathbb{C}^{N\times n}
\end{align*}
where $\y_i= \Chalf \x_i\in \mathbb{C}^{N}$, $i=1,\ldots,(1-\varepsilon_n)n$, are independent across $i$, $\C \in \mathbb{C}^{N \times N}$ is deterministic Hermitian positive definite, and $\x_i$ has zero mean, unit variance and finite $(8+\eta)$-th order moment entries for some $\eta>0$, while $\a_1,\ldots,\a_{\varepsilon_n n} \in \mathbb{C}^{N}$ are arbitrary deterministic vectors.\footnote{As shall be seen in Section~\ref{sec:random}, the vectors $\a_i$'s can be considered random as long as they are independent of the $\y_i$'s.}
We shall further assume that, as $N\to\infty$, $\limsup_N \| \C \| < \infty$. 

The vectors $\y_1, \ldots, \y_{(1-\varepsilon_n)n}$ will be considered the legitimate data, while $\a_1,\ldots,\a_{\varepsilon_n n}$ are deterministic unknown outliers.
It is important to note at this point that all estimators of $\C$ considered in the following are invariant to column permutations in $\Y$ so that we can freely assume the first columns of $\Y$ to be the legitimate data and the last columns to be the outliers.
Note also that we consider here a more general setting than Gaussian legitimate data as we merely request the $\x_i$'s to have independent normalized entries with some bounded moment condition. 

Although $\a_1,\ldots,\a_{\varepsilon_n n}$ are arbitrary, for technical reasons we shall need the following control. 
\begin{assumption}
	\label{as:C}
	$\limsup_n \| \frac1n \sum_{i=1}^{\eps n} \Chalfm \a_i \a_i^\dagger \Chalfm \|< \infty .$
\end{assumption}
Note that, if $\limsup_n \eps n < \infty$, Assumption \ref{as:C} reduces to $\limsup_n \max_{1\leq i \leq \eps n} \frac1N\a_i^*\C^{-1}\a_i < \infty$.

\bigskip

If one were aware of the presence and position of outliers in the dataset, then the natural estimator for $\C$ (up to renormalization by $1-\varepsilon_n$) would read $\frac1n\Y^{\rm o}{\Y^{\rm o}}^\dagger$ with $\Y^{\rm o}=[\y_1,\ldots,\y_{(1-\varepsilon_n)n}]$; this estimator, which we shall refer to as the Oracle estimator (hence the ``${\rm o}$'' superscript), merely consists in a SCM with discarded outliers. 
For lack of knowing the outliers presence and positions, the immediate alternative estimate for $\C$ is the SCM, which reads here $\frac1n\Y\Y^\dagger$. If one is only interested in estimating any scaled version of $\C$, then, to mitigate the negative impact of outliers with arbitrarily large norm, a simple robust procedure consists in estimating $\C$ via the normalized SCM $\frac1n{\Y^{\rm n}}{\Y^{\rm n}}^\dagger$, where ${\Y^{\rm n}}\triangleq \Y{\rm diag}(\frac1N\Y^\dagger\Y)^{-\frac12}$. This matrix has the advantage of avoiding arbitrarily large biases in the estimation of $\C$. However, being only based on a per-data norm control, $\frac1n{\Y^{\rm n}}{\Y^{\rm n}}^\dagger$ does not take into account the fact that outliers can also be detected if they significantly differ, not just in norm, from the majority of the data. The robust estimators of scatter, introduced by Huber \cite{Huber1964} and later studied by Maronna \cite{Maronna1976}, were precisely designed for this purpose. Our objective here is to finely understand this outlier identification and mitigation procedure by means of a large random matrix analysis.

To be able to define a robust M-estimator of scatter in the sense of Maronna under the presence of arbitrary outlier vectors, a constraint must be set on $\varepsilon_n$ and $N$. In particular, as $n$ grows large, we shall require that $n(1-\varepsilon_n)/N$ (and not only $n/N$) be always beyond one.
\begin{assumption}[Growth rate]
\label{as:cN}
As $n \to \infty$ $\varepsilon_n \to \varepsilon\in [0,1)$ and $\c \triangleq \frac{N}n \to c$ with $0 < c < 1-\varepsilon$.
\end{assumption}

We then define Maronna's $M$-estimator of scatter $\Chat$ as a solution, when it exists, to the equation in $\Z$
\begin{align} \label{Maronna}
\Z &= \frac{1}{n} \sum_{i=1}^{(1-\varepsilon_n)n} u\left( \frac{1}{N} \y_i^\dagger \Z^{-1}  \y_i \right) \y_i \y_i^\dagger \nonumber \\
   &+ \frac{1}{n} \sum_{i=1}^{\varepsilon_n n} u\left( \frac{1}{N} \a_i^\dagger \Z^{-1} \a_i  \right) \a_i \a_i^\dagger.
\end{align}
where $u:[0,\infty)\to (0,\infty)$ is continuous, non-increasing, and such that $\phi(x) \triangleq x u(x)$ is increasing with $\lim_{x\to \infty} \phi (x) \triangleq \phi_{\infty}$ and $(1-\varepsilon)^{-1}<\phi_{\infty}<c^{-1}$. 
	Note that the latter assumption on $\phi_{\infty}$ is equivalent to that in \cite{Couillet2013} with a slight modification accounting for the presence of outliers.
	
	A standard choice for the function $u$ is $u=u_\stud$, where, for some $t>0$,
\begin{align}
	\label{eq:u_mar}
	u_\stud(x)\triangleq \frac{1+t}{t+x}
\end{align}
which, for an appropriate $t$, turns $\Chat$ into the maximum-likelihood estimator of $\C$ when the columns of $\Y$ are independent multivariate Student vectors (hence the superscript ``$\stud$'').
As $t\to 0$, $\Chat$ converges to one version of the so-called Tyler estimator \cite{Tyler1987}, as shown in \cite{CHI14}.\footnote{As opposed to Maronna's class of estimators, Tyler estimator is only defined up to a constant factor; thus it estimates $\C$ up to a scale parameter.} We shall however restrict our study here to Maronna's class of estimators. Of particular interest in the present work is another function $u$, which we shall (somewhat abusively\footnote{Huber's original estimator takes the form $u(x)=\max\{\alpha,\beta/x\}$ for some $\alpha,\beta$, hence with additional parameters and with $t=0$. However, uniqueness of $\Chat$ is not guaranteed for $t=0$ and, in the random matrix limit, $\alpha=\beta=1$ is a particularly appealing choice.}) refer to as Huber's estimator function $u_\hub$, defined, for some $t>0$, as
\begin{align}
	\label{eq:u_hub}
	u_\hub(x)\triangleq \max\left\{ 1, \frac{1+t}{t+x}\right\}.
\end{align}
This function has the particularity of being constant for all $x\leq 1$, which will be later seen as an important property. 

\section{Main Result}
\label{sec:main}

From the problem setting, Assumption~\ref{as:cN}, and \cite[Thm.~2.3]{Kent1991}, it is easily seen that, with probability one, the solution of \eqref{Maronna} is unique for all large $n$ and thus $\Chat$ is unequivocally defined. In the same spirit as in \cite{Couillet2013,COU14} (and with similar notations), our first objective is to find an explicit tight approximation of the implicitly defined $\Chat$ in the regime where $N,n\to\infty$ as per Assumption~\ref{as:cN}. Our main result unfolds as follows.

\begin{theorem}[Asymptotic Behavior] 
\label{thm1}
Let Assumptions~\ref{as:C}--\ref{as:cN} hold and let $\Chat$ the solution to \eqref{Maronna} (unique for all large $n$, with probability one). Then, as $n \to \infty$,
\begin{align*}
\left\| \Chat - \Shat \right\| \toas 0 
\end{align*}
where
\begin{align*}
 \Shat & \triangleq v\left( \gammaEq \right) \frac{1}{n} \sum_{i=1}^{(1-\varepsilon_n)n}  \y_i \y_i^\dagger + \frac{1}{n} \sum_{i=1}^{\varepsilon_n n} v\left( \alphaEqi \right) \a_i \a_i^\dagger 
\end{align*}
with $v(x) = u \left( g^{-1}(x) \right)$, $g(x) = x/(1-c\phi (x))$, and $(\gammaEq,\alpha_{1,n},\ldots,\alpha_{\varepsilon_n n,n})$ the solution to 
\begin{align}
\gammaEq &= \frac{1}{N} \tr \C \hspace{-1mm} \left(\frac{(1-\varepsilon) v(\gammaEq)}{1+c v(\gammaEq) \gammaEq} \C + \frac{1}{n} \sum_{j=1}^{\varepsilon_n n} v ( \alphaEqj )  \a_j \a_j^\dagger  \right)^{\hspace{-1.5mm} -1} \nonumber \\ 
\alphaEqi&= \frac{1}{N} \a_i^\dagger \left( \frac{(1-\varepsilon) v(\gammaEq)}{1+c v(\gammaEq) \gammaEq} \C + \frac{1}{n} \sum_{j \neq i} v( \alphaEqj )  \a_j  \a_j^\dagger \right)^{\hspace{-1.5mm} -1} \hspace{-2mm} \a_i \label{gammaThm}
\end{align}
for $i=1,\ldots,\eps n$.
In particular, from \cite[Thm.~4.3.7]{Horn1985},
\begin{align*}
\max_{1\leq i \leq N} \left| \lambda_i(\Chat)-\lambda_i(\Shat)\right|\toas 0.
\end{align*}
\end{theorem}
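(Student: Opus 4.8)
The plan is to follow the random matrix approach of \cite{Couillet2013}, suitably adapted to accommodate the deterministic outlier terms. Writing $d_i \triangleq \frac1N \y_i^\dagger \Chat^{-1}\y_i$ for the legitimate samples and $e_i \triangleq \frac1N \a_i^\dagger \Chat^{-1}\a_i$ for the outliers, the defining equation \eqref{Maronna} reads $\Chat = \frac1n\sum_{i} u(d_i)\y_i\y_i^\dagger + \frac1n\sum_i u(e_i)\a_i\a_i^\dagger$, so that comparing with $\Shat$ it suffices to prove that the random weights concentrate, namely $\max_i |u(d_i)-v(\gammaEq)|\toas 0$ over the legitimate block and $\max_i|u(e_i)-v(\alphaEqi)|\toas 0$ over the outliers. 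The spectral-norm convergence $\|\Chat-\Shat\|\toas 0$ then follows, since the difference is a weighted sum of outer products whose weight discrepancies vanish uniformly while the aggregate operator norm stays bounded (via Assumption~\ref{as:C} for the outlier block and standard sample-covariance bounds for the legitimate block). The eigenvalue statement is an immediate consequence of the cited Weyl-type inequality \cite[Thm.~4.3.7]{Horn1985}.

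The core machinery rests on three ingredients. First, one must establish a uniform lower bound on $\lambda_1(\Chat)$, equivalently an upper bound on $\|\Chat^{-1}\|$, holding with probability one for all large $n$; this is where the structural constraints $(1-\varepsilon)^{-1}<\phi_\infty$ and $\phi_\infty<c^{-1}$ enter, together with $\limsup_N\|\C\|<\infty$, to rule out degeneracy of the estimator. Second, for each index $i$ I would introduce the rank-one deflated estimator $\Chatr$ obtained by discarding the $i$-th contribution, and invoke the Sherman–Morrison identity to write $\y_i^\dagger \Chat^{-1}\y_i = \y_i^\dagger\Chatr^{-1}\y_i\,/\,(1+\frac1n u(d_i)\,\y_i^\dagger\Chatr^{-1}\y_i)$, and analogously for the outliers. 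Because $\Chatr$ depends on $\y_i$ only weakly (through the deflated weights, whose residual perturbation is controlled separately), the trace lemma applies; the $(8+\eta)$-th moment condition supplies exactly the integrability needed for a Borel--Cantelli union bound over the $\O(n)$ indices, yielding $\frac1N\y_i^\dagger\Chatr^{-1}\y_i - \frac1N\tr\C\,\Chatr^{-1}\toas 0$ uniformly in $i$.

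Third, I would convert these relations into the deterministic fixed-point system \eqref{gammaThm}. Since removing a single rank-one term perturbs $\frac1N\tr\C\,\Chatr^{-1}$ only by a vanishing amount, all legitimate quadratic forms $d_i$ share one common limit; calling it $\gammaEq$ and unfolding the Sherman--Morrison denominator produces precisely the change of variables encoded by $g(x)=x/(1-c\phi(x))$, so that $u(d_i)\to u(g^{-1}(\gammaEq))=v(\gammaEq)$. The same deflation applied to an outlier $\a_i$, with its own term removed, gives the per-outlier equation for $\alphaEqi$, in which the remaining outliers contribute the explicit deterministic sum $\frac1n\sum_{j\neq i}v(\alphaEqj)\,\a_j\a_j^\dagger$. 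Existence and uniqueness of the solution $(\gammaEq,\alpha_{1,n},\ldots,\alpha_{\eps n,n})$, and convergence of the empirical quantities to it, would then follow from the monotonicity of $\phi$ (hence of $g$ and $v$) via a standard contraction / standard-interference-function argument as in \cite{Couillet2013}.

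The principal obstacle is the \emph{simultaneous, uniform} control of all $\O(n)$ quadratic forms together with the spectral lower bound on $\Chat$, rendered more delicate by the deterministic outliers, which do not self-average and must be handled through Assumption~\ref{as:C} rather than by probabilistic cancellation. Concretely, one needs a bootstrapping scheme that first secures rough spectral bounds on $\Chat$, then sharpens the quadratic-form estimates uniformly, and finally closes the loop to pin down the fixed point. Disentangling the coupling between the common legitimate limit $\gammaEq$ and the individual outlier limits $\alphaEqi$, while ensuring the weight discrepancies decay fast enough to annihilate the aggregate operator norm of the perturbation, is the most technical part of the argument.
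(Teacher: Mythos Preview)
Your outline has the right architecture (Sherman--Morrison rewriting via $g$, reduction to uniform convergence of the weights, interference-function argument for the fixed point, final operator-norm bound via Assumption~\ref{as:C} and \cite{Bai1998}), and this matches the paper. But there is a genuine gap at the step you describe as ``$\Chatr$ depends on $\y_i$ only weakly \ldots\ the trace lemma applies''. It does not: after removing the $i$-th outer product, \emph{every remaining weight} $u(d_j)$ and $u(e_j)$ in $\Chatr$ still depends on the full estimator $\Chat$, hence on $\x_i$. The trace lemma requires the matrix to be independent of the quadratic-form vector, and your proposed ``bootstrapping scheme'' does not explain how to break this circularity. Likewise, the a~priori lower bound on $\lambda_1(\Chat)$ you list as the first ingredient is neither available nor actually used in the paper.

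The device the paper uses instead is a monotone \emph{comparison} argument. Writing $e_i=v(d_i)/v(\gammaEq)$ and $f_i=v(b_i)/v(\alphaEqi)$, one orders them and sets $\delta_n=\max(e_{(1-\eps)n},f_{\eps n})$. Since $v$ is non-increasing and all weights satisfy $v(d_j)\le \delta_n v(\gammaEq)$, $v(b_j)\le \delta_n v(\alphaEqj)$, one obtains
\[
e_i \;\le\; \frac{v\!\left(\dfrac{1}{\delta_n N}\,\x_i^\dagger \Ani^{-1}\x_i\right)}{v(\gammaEq)},\qquad
\Ani \triangleq \frac{v(\gammaEq)}{n}\sum_{j\ne i}\x_j\x_j^\dagger+\frac{1}{n}\sum_j v(\alphaEqj)\a_j\a_j^\dagger,
\]
where now $\Ani$ has \emph{deterministic} weights and is genuinely independent of $\x_i$. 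The trace lemma (with the $(8+\eta)$-moment for the union bound) then gives $\frac1N\x_i^\dagger\Ani^{-1}\x_i\to\gammaEq$ uniformly, and the required spectral lower bound is obtained for $\Ani$ (not for $\Chat$) directly from the legitimate sample-covariance part. Finally, assuming $\delta_n>1+\ell$ infinitely often leads, via the strict monotonicity of $\psi(x)=xv(x)$ and a subsequence argument, to a contradiction; hence $\delta_n\to 1$, and symmetrically for the minimum. This sandwiching with deterministic-weight surrogates is the missing idea in your proposal.
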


\begin{remark}[Function $v$]
	The function $v$ defined in Theorem~\ref{thm1} was already introduced in \cite{Couillet2013} and uses, through $g$, the assumption that $\phi(x)<c^{-1}$. It has essentially the same general properties as $u$ in that it is continuous, non-increasing and such that $\psi(x) \triangleq x v(x)$ is increasing and bounded with $\lim_{x\to \infty} \psi (x) \triangleq \psi_{\infty} = \phi_{\infty} / (1-c \phi_{\infty})$. 
\end{remark}

\begin{remark}[Relation to previous results]
	\label{rem:previous_results}
	Taking $\varepsilon_n=0$, Theorem~\ref{thm1} reduces to the result obtained in \cite{Couillet2014a} and \cite{ZHA14}, i.e., $\Shat=v(\gammaEq) \frac1n\Y\Y^\dagger$. In this case, \eqref{gammaThm} reduces to
	\begin{align*}
	 \gammaEq &= \frac{1+c v(\gammaEq) \gammaEq}{v(\gammaEq)}
	\end{align*}
	which, after basic algebra, entails $\gammaEq=\phi^{-1}(1)/(1-c)$ and $v(\gammaEq)=1/\phi^{-1}(1)$.
\end{remark}

Theorem~\ref{thm1} allows us to transfer many properties of the implicit matrix $\Chat$ into the more tractable matrix $\Shat$, the random matrix structure of which is well known and has been studied as early as in \cite{Silverstein1995a}. The structure of $\Shat$ is particularly interesting as it mostly consists of two terms: the sum of outer products of the legitimate data scaled by a constant factor $v(\gamma_n)$ along with a per-sample weighted sum of the outer products of the outlying data. Therefore, as one would expect, $\Chat$ sets a specific emphasis (either small or large) on each outlying sample while maintaining all legitimate data under constant weight. We expect here that, as opposed to the SCM that provides no control on the data or to the normalized SCM that merely normalizes the outliers, $\Chat$ will appropriately ensure a reduction of the outlier impact by letting $v(\alpha_{j,n})$ be quite small compared with $v(\gamma_n)$, especially if $\eps$ is small.

An immediate corollary of Theorem~\ref{thm1} concerns the large $N$ eigenvalue distribution of $\Chat$ and reads as follows.

\begin{corollary}[Spectral Distribution] Define the empirical spectral distribution $F_N^{\Chat}(x) = \frac1N \sum_{i=1}^N \ind_{\{\lambda_i (\Chat) \leq x\}}$ for $x \in \mathbb{R}$. Then, under the setting of Theorem~\ref{thm1},
\begin{align}
F_N^{\Chat} (x) - F_N (x) \Rightarrow 0
\notag
\end{align}
almost surely as $n \to \infty$, where $F_N (x)$ is a real distribution function with density defined via its Stieltjes transform $m_N(z)$ (i.e.,\footnote{Recall that any distribution function $F$ is uniquely defined by its Stieltjes transform $m(z)$ by the fact that, for all continuity points $a,b$ of $F$, $$F(b)-F(a)=\lim_{y\downarrow 0} \int_a^b \Im [m(t+\imath y)]dt.$$} $m_N(z)\triangleq \int (t-z)^{-1}dF_N(t)$) given for all $z \in \mathbb{C}^+$ by
\begin{align*}
m_N(z) = \frac1N\tr \left( \frac{(1-\varepsilon) v(\gammaEq)}{1+e_{N}(z)} \C + \A - z \I \right)^{-1}
\end{align*}
with $\A \triangleq \frac{1}{n} \sum_{i=1}^{\varepsilon_n n} v\left( \alphaEqi \right) \a_i \a_i^\dagger$ and $e_N(z)$ the unique solution in $\mathbb{C}^+$ of the equation
\begin{align*}
	e_{N}(z) = \frac{v(\gammaEq)}n\tr\C \left( \frac{(1-\varepsilon) v(\gammaEq)}{1+e_{N}(z)} \C + \A - z \I \right)^{-1}.
\end{align*}
\label{cor2}
\end{corollary}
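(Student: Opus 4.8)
The plan is to deduce the spectral behavior of $\Chat$ from that of the explicit matrix $\Shat$ furnished by Theorem~\ref{thm1}, and then to recognize $\Shat$ as a standard additive-deformation sample covariance model whose empirical spectral distribution is governed by a Marchenko--Pastur-type fixed-point equation, as studied in \cite{Silverstein1995a}.

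First I would transfer the problem from $\Chat$ to $\Shat$. Since Theorem~\ref{thm1} gives $\|\Chat-\Shat\|\toas 0$ and hence $\max_i|\lambda_i(\Chat)-\lambda_i(\Shat)|\toas 0$, the L\'evy distance between $F_N^{\Chat}$ and $F_N^{\Shat}$ is bounded by $\max_i|\lambda_i(\Chat)-\lambda_i(\Shat)|$ and therefore vanishes almost surely; it thus suffices to prove $F_N^{\Shat}-F_N\Rightarrow 0$ almost surely. Writing $\Shat=\A+\frac{v(\gammaEq)}{n}\sum_{i=1}^{(1-\eps)n}\y_i\y_i^\dagger$ with $\y_i=\Chalf\x_i$, I note that $\A$ is deterministic and that the scalars $v(\gammaEq),v(\alphaEqi)$ solving \eqref{gammaThm} are deterministic. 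Hence $\Shat$ is a fixed Hermitian matrix $\A$ additively deformed by a constant-weight sample covariance matrix built from i.i.d.\@ normalized columns, independent of $\A$.

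Next I would verify the hypotheses of the standard result. The entries of $\x_i$ have zero mean, unit variance and bounded higher-order moments, $\limsup_N\|\C\|<\infty$ holds by assumption, and $\|\A\|$ is bounded: indeed $v$ is non-increasing so $v(\alphaEqi)\le v(0)<\infty$, whence $\|\A\|\le v(0)\,\|\frac1n\sum_i\a_i\a_i^\dagger\|\le v(0)\,\|\C\|\,\|\frac1n\sum_i\Chalfm\a_i\a_i^\dagger\Chalfm\|$, which is finite by Assumption~\ref{as:C}. These are precisely the conditions under which the deformed model admits a deterministic equivalent. Applying it, the resolvent $\mathbf{Q}(z)=(\Shat-z\I)^{-1}$ satisfies $\frac1N\tr\mathbf{Q}(z)-m_N(z)\toas 0$ for each $z\in\mathbb{C}^+$, where $m_N(z)=\frac1N\tr\bar{\mathbf{Q}}(z)$ with $\bar{\mathbf{Q}}(z)=\big(\frac{(1-\eps)v(\gammaEq)}{1+e_N(z)}\C+\A-z\I\big)^{-1}$ and $e_N(z)$ the unique solution in $\mathbb{C}^+$ of $e_N(z)=\frac{v(\gammaEq)}{n}\tr\C\bar{\mathbf{Q}}(z)$. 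Since the legitimate sum runs over $(1-\eps)n$ terms with the single constant weight $v(\gammaEq)$, the corresponding coefficient of $\C$ is exactly $\frac{(1-\eps)v(\gammaEq)}{1+e_N(z)}$, matching the stated $m_N$ and $e_N$ verbatim; pointwise Stieltjes-transform convergence then yields $F_N^{\Shat}-F_N\Rightarrow 0$ almost surely, and combining with the first step closes the argument.

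The step I would treat most carefully is the derivation of the self-consistent equation together with existence and uniqueness of $e_N(z)$ in $\mathbb{C}^+$. This is obtained by the usual leave-one-column-out expansion of $\bar{\mathbf{Q}}$: isolating the $j$-th legitimate term produces a rank-one update whose quadratic form $\frac{v(\gammaEq)}{n}\y_j^\dagger\mathbf{Q}_{(j)}\y_j$ concentrates on $\frac{v(\gammaEq)}{n}\tr\C\bar{\mathbf{Q}}=e_N(z)$, and summing the $(1-\eps)n$ identical contributions yields the factor $\frac{(1-\eps)v(\gammaEq)}{1+e_N(z)}$ in front of $\C$ while the deterministic $\A$ passes through unchanged; the replacement of the exact ratio $1-\varepsilon_n$ by its limit $1-\eps$ only incurs a vanishing error. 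Uniqueness of $e_N(z)$ follows from the standard fact that the associated fixed-point map is a self-map of the analytic class of Stieltjes transforms on $\mathbb{C}^+$ and is contractive there. No genuinely new difficulty arises beyond the approximation already secured by Theorem~\ref{thm1}.
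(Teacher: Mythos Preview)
Your proposal is correct and follows essentially the same route as the paper: the corollary is treated as immediate from Theorem~\ref{thm1} (transferring from $\Chat$ to $\Shat$) together with a standard deterministic-equivalent result for the additive-deformation sample covariance model, for which the paper invokes \cite[Thm.~1]{Wagner2012} (restated in Appendix~\ref{app:moments}) rather than \cite{Silverstein1995a}, but either reference applies here since all legitimate columns share the common covariance $v(\gammaEq)\C$. Your explicit verification that $\|\A\|$ is bounded via Assumption~\ref{as:C} and the sketch of the leave-one-out derivation of $e_N(z)$ are entirely in line with, though more detailed than, what the paper provides.
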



In the appendix, it is importantly shown that $\limsup_N\|\Chat\|<\infty$ a.s. (as a result of $\limsup_N\|\Shat\|<\infty$ a.s.). This implies that $F_N^{\Chat}$ and $F_N$ have compact supports and are fully determined by their respective moments $M^{\Chat}_{N,k}\triangleq \int t^k dF_N^{\Chat}(t)$ and $M_{N,k}\triangleq \int t^k dF_N(t)$, $k=1,2,\ldots$, which satisfy $M^{\Chat}_{N,k}-M_{N,k}\toas 0$ (by the dominated convergence theorem). While $F_N$ is defined via its deterministic but implicit Stieltjes transform, the $M_{N,k}$ can be retrieved explicitly using successive derivatives of the moment generating formula (for $|z|<1/\sup({\rm Supp}(F_N))$)
\begin{align*}
	m_N(1/z) &= - \sum_{k=0}^\infty z^{k+1} M_{N,k}.
\end{align*}
Precisely, we obtain here the following result.
\begin{corollary}[Moments]
	For $F_N$ defined in Corollary~\ref{cor2}, letting $M_{N,p}\triangleq \int t^p dF_N(t)$, $p=1,2,\ldots$, 
\begin{align}
M_{N,p} = \frac{(-1)^p}{p!} \frac1N \tr \T_p
\notag
\end{align}
where $\T_p$ is obtained from the following recursive formulas
\begin{align*}
\T_{p+1} &= - \hspace{-0.5mm} \sum_{i=0}^p \T_{p-i} \A \T_i \hspace{-0.3mm} + \hspace{-0.3mm} \sum_{i=0}^p \sum_{j=0}^i \binom{p}{i} \binom{i}{j} \T_{p - i} \Q_{i \hspace{-0.2mm}  - \hspace{-0.2mm} j \hspace{-0.2mm} + \hspace{-0.2mm} 1} \T_j \notag \\
\Q_{p+1} &= (p+1) f_{p} (1-\varepsilon) v(\gammaEq) \C \notag \\
f_{p+1} &= \sum_{i=0}^p \sum_{j=0}^i \binom{p}{i} \binom{i}{j} (p-i+1) f_{j} f_{i-j} \deltaf_{p-i} \notag \\
\deltaf_{p+1} &= v(\gammaEq) \frac1n \tr \C \T_{p+1} ,
\end{align*}
with initial values $\T_0 = \I$, $f_{0} = -1$, $\deltaf_{0} = v(\gammaEq) \frac1n \tr \C$.
In particular,
\begin{align*}
 M_{N,1} &= \frac1N \tr \left[ \A + (1-\varepsilon) v(\gammaEq) \C \right] \\
 M_{N,2} &= \frac1N \tr \Big[ \A^2 + 2 (1-\varepsilon) v(\gammaEq) \C \A \nonumber \\
	 & + (1-\varepsilon)^2 v^2(\gammaEq) \C^2 + \left[\frac1n\tr \C \right] (1-\varepsilon) v^2(\gammaEq) \C  \Big].
\end{align*}
	\label{cor:moments}
\end{corollary}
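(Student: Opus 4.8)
The plan is to reduce the computation of the moments to the extraction of Taylor coefficients of $m_N(1/z)$ at $z=0$, and then to solve the implicit system defining $m_N$ order by order. Since it is established just above the statement that $\limsup_N\|\Shat\|<\infty$ almost surely, $F_N$ has compact support; hence $m_N$ is analytic at infinity and the moment‑generating expansion $m_N(1/z)=-\sum_{k\geq0}z^{k+1}M_{N,k}$ holds for $|z|$ small. It therefore suffices to expand the right‑hand side of the fixed‑point characterization of $m_N$ in powers of $z$ and to match coefficients of $z^{p+1}$.

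First I would substitute the argument $1/z$ into the system of Corollary~\ref{cor2} and factor out the pole. Writing $v\triangleq v(\gammaEq)$ and $\tilde e(z)\triangleq e_N(1/z)$, I set
$$\T(z)\triangleq\Big(\I-z\A-z\,\tilde h(z)\,\C\Big)^{-1},\qquad \tilde h(z)\triangleq\frac{(1-\varepsilon)\,v}{1+\tilde e(z)},$$
so that the resolvent in Corollary~\ref{cor2} equals $-z\,\T(z)$ and $m_N(1/z)=-\tfrac{z}{N}\tr\T(z)$. Because the resolvent vanishes as $z\to0$, one has $\tilde e(0)=0$ and $\T(0)=\I$, which yields the initial value $\T_0=\I$. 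The $\T_p$ are then, up to the alternating‑sign and factorial normalization in $M_{N,p}=\frac{(-1)^p}{p!}\frac1N\tr\T_p$, the derivatives of $\T$ at $z=0$, so that the claimed moment formula is precisely the matching of the two expansions.

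The recursion itself arises from three coupled relations. From $\T^{-1}=\I-z\A-z\tilde h\C$ and the resolvent identity $\T'=\T\,[\A+(z\tilde h)'\C]\,\T$, Leibniz's rule produces the two convolution sums $\sum_i\T_{p-i}\A\T_i$ and $\sum_{i,j}\binom pi\binom ij\T_{p-i}\Q_{i-j+1}\T_j$, where $\Q$ collects the coefficients of the scalar prefactor $z\tilde h$ multiplying $\C$, the index shift $\Q_{i-j+1}$ reflecting the differentiation in $(z\tilde h)'$. To expand $\tilde h$ I introduce $f_p$, the coefficients of $-1/(1+\tilde e)$ (whence $f_0=-1$): expanding this geometric series and applying the product rule for exponential generating functions gives the triple‑binomial recursion for $f_{p+1}$, while the $z$‑prefactor in $z\tilde h=-(1-\varepsilon)v\,zf$ accounts for the factor $(p+1)$ in $\Q_{p+1}=(p+1)f_p(1-\varepsilon)v\,\C$. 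The loop is finally closed by the self‑consistency $\tilde e(z)=-z\,\tfrac{v}{n}\tr\C\,\T(z)$, which shows that the scalars $\deltaf_p=v\frac1n\tr\C\T_p$ determine $\tilde e$ and feed back into $f$; hence the chain $\T\to\deltaf\to\tilde e\to f\to\Q\to\T$ is solvable at each order. The explicit $M_{N,1},M_{N,2}$ then follow by running the recursion for $p=0,1$ with the stated initial values.

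The chief obstacle is the implicit, nonlinear coupling: $\tilde e$ appears inside the very resolvent whose trace defines it, so no single Neumann series suffices and the coefficients of $\T$, $f$, and $\deltaf$ must be solved \emph{simultaneously}, order by order. The delicate part is the bookkeeping of the combinatorial factors — the nested binomials $\binom pi\binom ij$ coming from triple products of exponential generating functions, and the $(p+1)$ factors and index shifts coming from the $z$‑prefactor and from differentiation — together with tracking the signs introduced by the $(-1)^p$ normalization (which, for instance, flips the sign of the $\A$‑term to $-\sum_i\T_{p-i}\A\T_i$). Throughout, the compact support of $F_N$ guarantees convergence of all the series involved, so that the termwise identification of coefficients is legitimate.
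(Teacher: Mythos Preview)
Your approach is correct and is essentially the same as the paper's: the paper obtains Corollary~\ref{cor:moments} by specializing a general moment theorem (stated in the appendix) whose proof is declared to ``follow the same steps as the proof of \cite[Thm.~2]{Hoydis2011} with proper modifications to account for $\A\neq\0$,'' and those steps are precisely the Taylor expansion of $m_N(1/z)$ at $z=0$ with simultaneous order-by-order resolution of the coupled resolvent/fixed-point equations that you describe. The only cosmetic difference is that the paper packages the derivation at the level of the general Wagner et al.\ model (arbitrary per-column covariances $\R_i$ plus a deterministic $\A$) and then specializes, whereas you work directly in the particular case $\R_i\equiv v(\gammaEq)\C$.
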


Albeit having characterized the random matrix $\Shat$, which approximates  the behavior of $\Chat$ for large $N,n$, it is quite challenging to gain a good intuitive understanding of the weight structure as the expression \eqref{gammaThm} relating $\gammaEq$ to the $\alphaEqi$'s is still implicit (while being deterministic). To get more insight on the properties of $\Chat$, we shall successively consider two specific scenarios that simplify the system \eqref{gammaThm}.

\section{Finitely Many Outliers Scenario}
\label{sec:few_outliers}

Let us first assume that $\eps n=K$ is maintained constant as $n\to\infty$ (thus $\varepsilon=0$). Recall that, in this scenario, Assumption \ref{as:C} can be replaced by the sufficient condition $\limsup_N \max_{1\leq i \leq \eps n} \frac1N\a_i^*\C^{-1}\a_i < \infty$. In the appendix, it is shown that $\gamma_n$ cannot grow unbounded with $n$. As such, by a rank-one perturbation argument iterated $K$ times, see e.g., \cite[Lemma 2.6]{Silverstein1995a}, we find that 
\begin{align}
	\gammaEq - \frac{1+c v(\gammaEq) \gammaEq}{v(\gammaEq)} = \O(1/N)
	\notag
\end{align}
which ensures by Remark~\ref{rem:previous_results} that 
\begin{align}
	\gamma_n = \frac{\phi^{-1}(1)}{1-c} + \O(1/N).
	\notag
\end{align}
We shall denote next $\gamma\triangleq \frac{\phi^{-1}(1)}{1-c}$ (and thus $v(\gamma)=1/\phi^{-1}(1)$).
Then we obtain that
\begin{align*}
	\left\Vert \Chat- \Shat' \right\Vert \toas 0
\end{align*}
with 
\begin{align*}
	\Shat' = v\left( \gamma \right) \frac1n\sum_{i=1}^{n-K} \y_i\y_i^\dagger + \frac1n \sum_{i=1}^K v(\alpha'_{i,n}) \a_i\a_i^\dagger
\end{align*}
where $\alpha'_{i,n}$ are the unique positive solutions to
\begin{align*}
	\alpha'_{i,n} &= \frac{1}{N} \a_i^\dagger \left( \gamma^{-1} \C + \frac{1}{n} \sum_{j \neq i} v( \alpha'_{j,n} )  \a_j  \a_j^\dagger \right)^{-1} \a_i.
\end{align*}
As such, when the number $K$ of outliers is fixed, the common weight $v(\gamma_n)$ becomes independent of the vectors $\a_i$'s (even if they are of arbitrarily large norm) while the individual weights $v(\alpha_{i,n})$ eventually solve a system of $K$ equations involving the $\a_i$'s and $\C$.

A more specific case lies in the scenario where $\a_1=\ldots=\a_K$. There, $\alpha_{1,n}=\ldots=\alpha_{K,n}$ and the $K$ equations above reduce to a single one reading
\begin{align*}
	\alpha'_{1,n} &= \frac{1}{N} \a_1^\dagger \left( \gamma^{-1} \C + \frac{K-1}{n} v( \alpha'_{1,n} )  \a_1  \a_1^\dagger \right)^{-1} \a_1
\end{align*}
which, using $\a^\dagger(\mathbf{A} + t\a\a^\dagger)^{-1}=\a^\dagger\mathbf{A}^{-1}/(1+t\a^\dagger \mathbf{A}^{-1} \a)$ for invertible $\mathbf{A}$, simplifies as
\begin{align*}
	\alpha'_{1,n} &= \gamma \frac{ \frac1N \a_1^\dagger \C^{-1}\a_1 }{ 1 +  \c \gamma (K-1) v( \alpha'_{1,n} ) \frac1N \a_1^\dagger \C^{-1} \a_1 }
\end{align*}
or equivalently
\begin{align*}
	\frac{\alpha'_{1,n}}{1-c_n(K-1)\psi(\alpha'_{1,n})} &= \gamma \frac1N \a_1^\dagger \C^{-1}\a_1.
\end{align*}
Since the right-hand side is positive, so should be the left-hand side, which may then be seen as an increasing function of $\alpha'_{1,n}$. Thus, since $\gamma$ depends neither on $\C$ nor $\a_1$, it comes that $\alpha'_{1,n}$ is an increasing function of $\frac1N \a_1^\dagger \C^{-1}\a_1$. Moreover, $\alpha'_{1,n}<\psi^{-1}(1/(\c(K-1)))$ and thus converges to zero as $K$ grows large. 
When $K=1$, and thus the outlier is now isolated, this reduces to
\begin{align*}
	\alpha'_{1,n} &= \gamma \frac1N \a_1^\dagger \C^{-1}\a_1.
\end{align*}

This short calculus leads to two important remarks. First, for $K=1$, $\Chat$ asymptotically allocates a weight $v(\gamma)$ to the legitimate data and a weight $v(\gamma \frac1N \a_1^\dagger \C^{-1}\a_1)$ for the outlier. As a consequence, by the non-increasing property of $v$, the effect of the outlier will be (for most choices of the $v$ function) attenuated if $\frac1N \a_1^\dagger \C^{-1}\a_1\geq 1$ but will be increased if $\frac1N \a_1^\dagger \C^{-1}\a_1\leq 1$. As such, the robust estimator of scatter will tend to mitigate the effect of outliers $\a_1$ having either large norm or, more interestingly, having strong alignment to the weakest eigenmodes of $\C$. In particular, note that when $\C={\bf I}_N$, $\Chat$ will mostly control outliers upon their norms $\frac1N \Vert\a_1\Vert^2$, which is essentially what the normalized SCM
\begin{align}
	\label{eq:normalized_SCM}
	\frac1n{\Y^{\rm n}}{\Y^{\rm n}}^\dagger = \frac1n\sum_{i=1}^{(1-\varepsilon_n)n} \frac{\y_i\y_i^\dagger}{\frac1N\Vert \y_i\Vert^2}+\frac1n\sum_{i=1}^{\varepsilon_nn} \frac{\a_i\a_i^\dagger}{\frac1N\Vert\a_i\Vert^2}
\end{align}
would do, and thus there is no gain in using robust estimators here. However, if $\C$ has large dimensional weak eigenspaces (i.e., close to singular with most eigenvalues near zero), $\frac1N \a_1^\dagger \C^{-1}\a_1$ may be quite large, and thus $\a_1$ may be strongly attenuated. But if $\a_1$ aligns to the strong eigenmodes of $\C$, the impact of $\a_1$ may be enhanced rather than reduced. To avoid this effect, undesirable in most cases, it is crucial to appropriately choose the $u$ function. Specifically, the function $v$ should be taken constant for all $x\leq \gamma$, or equivalently, $u(x)$ should be taken constant for $x\leq \phi^{-1}(1)$. A natural choice is the Huber estimator $u=u_\hub$ introduced in \eqref{eq:u_hub}.

The second remark is a slightly more surprising outcome. Indeed, despite $n$ being potentially extremely large, the presence of (already few) $K>1$ identical outliers drives $\Shat$ (and thus $\Chat$) to allocate large weights $v(\alpha_{i,n})$ (since $\alpha_{i,n}$ is small) to these outliers, therefore seemingly contradicting the very purpose of the robust estimator. This seems to indicate that $\Chat$ has the propensity to put forward both large quantities of data with similar distribution \emph{as well as} rather small quantities of vectors with strong pairwise alignment, while more naturally rejecting isolated outliers.

In terms of large dimensional spectral distribution and moments, the scenario of finitely many outliers is asymptotically equivalent to the outlier-free scenario. This can be observed from a rank-one perturbation argument along with $\eps\to 0$ applied to Corollaries~\ref{cor2}--\ref{cor:moments}.
A similar reasoning would hold for the normalized SCM. However, the matrices $\Chat$ and $\frac1n{\Y^{\rm n}}{\Y^{\rm n}}^\dagger$ themselves experience a (maximum) rank-$K$ perturbation which can severely compromise the estimation of $\C$, along the previous argumentation lines.

Figure~\ref{fig:oneoutlier_eigenvalues} displays an artificially generated scenario where a single outlier $\a_1$ of norm $\frac1N\Vert\a_1\Vert^2=1$ produces a large value for $\frac1N\a_1^\dagger\C^{-1}\a_1$ ($=14.50$), thus entailing a strong attenuation by $\Chat$. The terms $\a_1$ and $\C$ were made such that the SCM and normalized SCM have the same asymptotic eigenvalues and produce an isolated eigenvalue (around $.25$). The spectra of the latter are compared against those of $\Chat$ and the oracle estimator. It is seen that the isolated eigenvalue, which is naturally not present in the spectrum of the oracle estimator, is also not present in the spectrum of $\Chat$, indicating that $\Chat$ has significantly reduced its impact on the spectrum.

\begin{figure}[t!]
  \centering
  \begin{tikzpicture}[font=\footnotesize]
    \renewcommand{\axisdefaulttryminticks}{4} 
    \tikzstyle{every major grid}+=[style=densely dashed]       
    \tikzstyle{every axis y label}+=[yshift=-10pt, anchor=near ticklabel] 
    \tikzstyle{every axis x label}+=[yshift=5pt]
    \tikzstyle{every axis legend}+=[cells={anchor=west},fill=white,
        at={(0.98,0.98)}, anchor=north east, font=\scriptsize ]
    /xlabel near ticks
    /ylabel near ticks
    \begin{axis}[
      xmin=0,
      ymin=0,
      xmax=5,
      xtick={1,2,3,4},
      xticklabels={ $\frac1n\Y\Y^\dagger$ , $\frac1n\Y^{\rm n}{\Y^{\rm n}}^\dagger$ , $\Chat$ , $\frac1n\Y^{\rm o}{\Y^{\rm o}}^\dagger$ },
      bar width=1.5pt,
      grid=major,
      scaled ticks=true,
      ]
      \addplot[black,smooth,mark=x,only marks,line width=0.5pt] plot coordinates{
	      (1,2.180285)(1,0.042225)(1,0.046723)(1,0.048491)(1,0.051460)(1,0.054643)(1,0.057176)(1,0.061348)(1,0.064130)(1,0.067557)(1,0.250559)(1,0.374986)(1,0.406719)(1,0.421510)(1,0.438070)(1,0.466887)(1,0.471776)(1,0.487778)(1,0.501986)(1,0.516132)(1,0.531284)(1,0.542032)(1,0.554612)(1,0.562525)(1,0.570804)(1,0.594252)(1,0.606292)(1,0.616731)(1,0.640992)(1,0.651151)(1,0.662025)(1,0.667398)(1,0.685129)(1,0.708415)(1,0.717854)(1,0.733022)(1,0.757654)(1,0.761221)(1,0.771826)(1,0.787608)(1,0.796893)(1,0.817334)(1,0.835278)(1,0.848640)(1,0.854098)(1,0.870178)(1,0.882773)(1,0.901383)(1,0.911712)(1,0.923024)(1,0.951174)(1,0.962925)(1,0.972402)(1,0.984211)(1,1.009143)(1,1.023554)(1,1.046078)(1,1.056652)(1,1.060173)(1,1.066582)(1,1.092719)(1,1.116148)(1,1.125778)(1,1.159550)(1,1.166293)(1,1.180940)(1,1.207918)(1,1.223011)(1,1.250503)(1,1.255731)(1,1.263745)(1,1.297779)(1,1.315970)(1,1.338086)(1,1.367876)(1,1.387446)(1,1.395226)(1,1.421105)(1,1.433541)(1,1.463520)(1,1.492413)(1,1.503466)(1,1.519716)(1,1.549782)(1,1.576153)(1,1.590588)(1,1.640595)(1,1.673052)(1,1.689443)(1,1.735506)(1,1.748921)(1,1.795284)(1,1.820353)(1,1.831729)(1,1.881249)(1,1.908393)(1,1.947132)(1,1.991260)(1,2.013937)(1,2.074874)
	      (2,2.198030)(2,2.084320)(2,2.021921)(2,1.995515)(2,1.948117)(2,1.917816)(2,1.886626)(2,1.841657)(2,1.824946)(2,0.043048)(2,0.047388)(2,0.049193)(2,1.800200)(2,0.068889)(2,0.052488)(2,0.055629)(2,0.065505)(2,0.062492)(2,0.058463)(2,1.755659)(2,1.741509)(2,1.695630)(2,1.678111)(2,1.651538)(2,0.251781)(2,1.617975)(2,1.580772)(2,1.554991)(2,1.533247)(2,1.508143)(2,1.497382)(2,0.379884)(2,1.463909)(2,1.449049)(2,0.410294)(2,1.426022)(2,1.404309)(2,0.423910)(2,0.440316)(2,1.378070)(2,1.370635)(2,1.348638)(2,1.329711)(2,0.472304)(2,0.474809)(2,1.306790)(2,0.490734)(2,1.274748)(2,1.266559)(2,0.507145)(2,0.519451)(2,0.536774)(2,1.253677)(2,0.544852)(2,1.233493)(2,1.218399)(2,0.555698)(2,0.567175)(2,0.581028)(2,1.190438)(2,1.182246)(2,0.603428)(2,1.167917)(2,0.609453)(2,0.624896)(2,0.641580)(2,1.137783)(2,0.655319)(2,1.124496)(2,1.095074)(2,0.667759)(2,0.679358)(2,0.690717)(2,0.712579)(2,0.722728)(2,1.029649)(2,1.080189)(2,1.052355)(2,1.014665)(2,1.062713)(2,1.072650)(2,0.744165)(2,0.994926)(2,0.765760)(2,0.760244)(2,0.980875)(2,0.781254)(2,0.800998)(2,0.825304)(2,0.960526)(2,0.967925)(2,0.930306)(2,0.919930)(2,0.878573)(2,0.837183)(2,0.791220)(2,0.897654)(2,0.857228)(2,0.906679)(2,0.863896)
	      (3,2.184745)(3,2.067379)(3,2.010921)(3,1.979206)(3,1.935033)(3,1.894008)(3,1.874599)(3,1.823958)(3,1.808896)(3,1.787147)(3,1.727516)(3,1.734941)(3,0.074654)(3,0.042066)(3,0.045997)(3,0.047913)(3,0.053622)(3,0.050284)(3,0.066828)(3,0.060770)(3,1.680035)(3,0.057089)(3,0.062431)(3,1.659581)(3,1.632379)(3,1.603013)(3,1.563172)(3,1.538292)(3,1.514208)(3,1.488375)(3,1.477403)(3,1.434892)(3,1.443965)(3,1.408164)(3,0.371412)(3,1.389168)(3,0.401848)(3,1.364626)(3,0.416901)(3,1.354495)(3,0.431641)(3,1.334278)(3,1.313048)(3,1.293179)(3,0.463109)(3,0.465499)(3,0.480576)(3,1.260154)(3,0.496460)(3,1.248568)(3,1.240426)(3,0.508641)(3,1.205608)(3,1.217919)(3,0.526467)(3,0.535588)(3,1.175112)(3,0.545544)(3,0.555109)(3,1.159289)(3,0.567765)(3,1.149171)(3,0.590436)(3,0.597886)(3,1.118365)(3,1.106453)(3,0.610755)(3,0.630810)(3,0.642895)(3,1.079766)(3,0.655567)(3,1.066832)(3,1.055777)(3,0.679701)(3,0.666993)(3,0.699750)(3,1.036790)(3,1.046686)(3,0.710067)(3,1.012103)(3,0.994521)(3,0.729817)(3,0.983219)(3,0.966333)(3,0.943768)(3,0.766093)(3,0.775138)(3,0.917004)(3,0.810343)(3,0.842565)(3,0.850915)(3,0.752419)(3,0.749721)(3,0.791294)(3,0.956172)(3,0.905470)(3,0.883093)(3,0.891642)(3,0.864968)(3,0.818447)
	      (4,2.184624)(4,2.078995)(4,2.017953)(4,1.995243)(4,1.951012)(4,1.912185)(4,1.885011)(4,1.835383)(4,1.823978)(4,1.798875)(4,0.042246)(4,0.045637)(4,1.752413)(4,1.738931)(4,0.068841)(4,0.048133)(4,0.049470)(4,0.053671)(4,0.066058)(4,0.061705)(4,0.058457)(4,0.056817)(4,1.692773)(4,1.676387)(4,1.643855)(4,1.593714)(4,1.579295)(4,1.552882)(4,1.522725)(4,1.506468)(4,1.495399)(4,0.375692)(4,1.466447)(4,0.407455)(4,1.436360)(4,0.422145)(4,0.438900)(4,1.423941)(4,1.398022)(4,1.390223)(4,1.370600)(4,0.467716)(4,0.472305)(4,1.340755)(4,1.318580)(4,1.300358)(4,0.488564)(4,0.502511)(4,0.517049)(4,1.266232)(4,0.532284)(4,0.542450)(4,1.258221)(4,1.252977)(4,0.555680)(4,0.563610)(4,1.225461)(4,1.210285)(4,0.571938)(4,0.595432)(4,1.183288)(4,0.607355)(4,0.617882)(4,1.168605)(4,1.161831)(4,0.642249)(4,1.128028)(4,1.118280)(4,0.652428)(4,0.686399)(4,0.663181)(4,0.668654)(4,1.094908)(4,0.709832)(4,0.719280)(4,0.734442)(4,1.025591)(4,1.068663)(4,1.048142)(4,1.011087)(4,0.986118)(4,0.798477)(4,0.789144)(4,0.974268)(4,0.818945)(4,0.762645)(4,1.061985)(4,0.924713)(4,0.913529)(4,0.964838)(4,0.952993)(4,0.773366)(4,0.759136)(4,0.836936)(4,0.884535)(4,0.903184)(4,0.871836)(4,1.058765)(4,0.850271)(4,0.855781)
      };
      \draw[color=blue] (axis cs:1.5,0.25) ellipse [x radius=100,y radius=5];
    \end{axis}
  \end{tikzpicture}
  \caption{
	  Eigenvalues of the SCM ($\frac1n\Y\Y^\dagger$), normalized SCM ($\frac1n\Y^{\rm n}{\Y^{\rm n}}^\dagger$), $\Chat$ for $u=u_\stud$ with $t=.1$, and the oracle estimator ($\frac1n\Y^{\rm o}{\Y^{\rm o}}^\dagger$); $N=100$, $c=.2$, $\varepsilon_nn=1$, $\a_1= (\a^1_1,\a^2_1)^\dagger$, $\a^1_1\in\mathbb{R}^{10}$, $\a^2_1\in\mathbb{R}^{90}$, with $\a^1_{1,i}=\sqrt{10}$, $\a^2_{1,i}=0$, such that $\Vert \a_1\Vert^2=N$; $\y_i = \Chalf \x_i$ with $\x_{i,j}$ standard Gaussian and $\C = (16/14.50) \, {\rm diag}( {\bf c}_1, {\bf c}_2)$, ${\bf c}_1 \in \mathbb{R}^{10}$, ${\bf c}_2 \in \mathbb{R}^{90}$, with ${\bf c}_{1i}=1/16$, ${\bf c}_{2i}=1$, such that $\tr\C=N$. Ellipse around the outlier artifact.
  }
  \label{fig:oneoutlier_eigenvalues}
\end{figure}

\bigskip

Another interesting case study that shall provide further insight on $\Chat$ is that where the $\a_i$'s (possibly numerous) are independently extracted from a different distribution to that of the $\y_i$'s. This is pursued in the subsequent section.

\section{Random Outliers Scenario}
\label{sec:random}

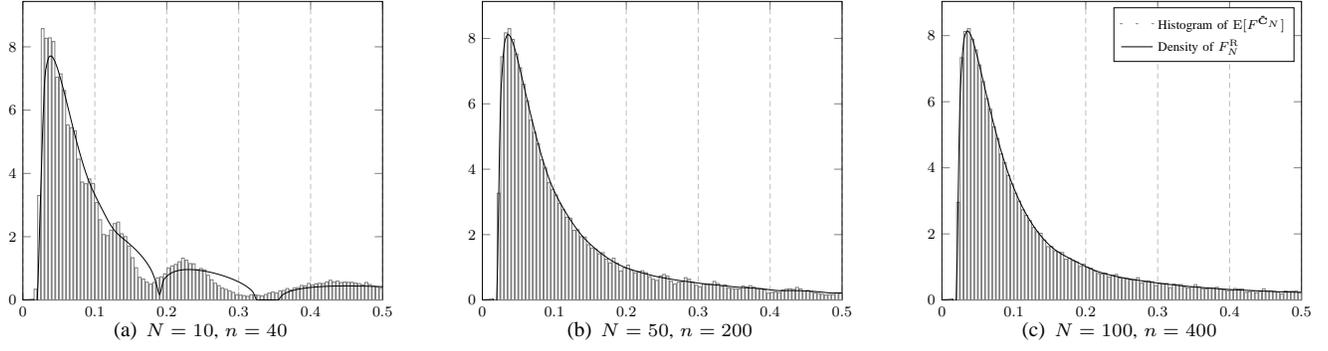
\begin{figure*}[bt]
  \centering
\begin{tabular}{ccc}

   \subfigure [\scriptsize $N=10$, $n=40$]{  
   \adjustbox{width=.3\textwidth}{     
  \begin{tikzpicture}[font=\footnotesize]
    \renewcommand{\axisdefaulttryminticks}{4} 
    \tikzstyle{every major grid}+=[style=densely dashed]       
    \tikzstyle{every axis y label}+=[yshift=-10pt, anchor=near ticklabel] 
    \tikzstyle{every axis x label}+=[yshift=5pt]
    \tikzstyle{every axis legend}+=[cells={anchor=west},fill=white,
        at={(0.98,0.98)}, anchor=north east, font=\scriptsize ]
    /xlabel near ticks
    /ylabel near ticks
    \begin{axis}[
      xmin=0,
      ymin=0,
      xmax=0.5,
      bar width=1.5pt,
      grid=major,
      ymajorgrids=false,
      scaled ticks=true,
      mark repeat=3,
      ]
      \addplot[ybar,mark=none,color=gray,fill=white] coordinates{
(0.002500,0.000000)(0.007500,0.000000)(0.012500,0.000000)(0.017500,0.342000)(0.022500,3.302000)(0.027500,8.576000)(0.032500,8.270000)(0.037500,8.284000)(0.042500,8.168000)(0.047500,7.040000)(0.052500,7.146000)(0.057500,6.620000)(0.062500,5.528000)(0.067500,5.444000)(0.072500,5.344000)(0.077500,4.452000)(0.082500,3.728000)(0.087500,3.674000)(0.092500,3.826000)(0.097500,3.684000)(0.102500,3.078000)(0.107500,2.538000)(0.112500,2.070000)(0.117500,2.036000)(0.122500,2.208000)(0.127500,2.416000)(0.132500,2.462000)(0.137500,2.094000)(0.142500,2.006000)(0.147500,1.702000)(0.152500,1.338000)(0.157500,1.020000)(0.162500,0.716000)(0.167500,0.662000)(0.172500,0.564000)(0.177500,0.500000)(0.182500,0.554000)(0.187500,0.704000)(0.192500,0.734000)(0.197500,0.828000)(0.202500,1.014000)(0.207500,1.078000)(0.212500,1.126000)(0.217500,1.208000)(0.222500,1.322000)(0.227500,1.258000)(0.232500,1.152000)(0.237500,1.144000)(0.242500,0.958000)(0.247500,0.998000)(0.252500,0.956000)(0.257500,0.782000)(0.262500,0.640000)(0.267500,0.530000)(0.272500,0.456000)(0.277500,0.378000)(0.282500,0.336000)(0.287500,0.292000)(0.292500,0.240000)(0.297500,0.168000)(0.302500,0.154000)(0.307500,0.140000)(0.312500,0.118000)(0.317500,0.170000)(0.322500,0.162000)(0.327500,0.150000)(0.332500,0.130000)(0.337500,0.176000)(0.342500,0.220000)(0.347500,0.252000)(0.352500,0.224000)(0.357500,0.288000)(0.362500,0.304000)(0.367500,0.364000)(0.372500,0.322000)(0.377500,0.382000)(0.382500,0.392000)(0.387500,0.462000)(0.392500,0.444000)(0.397500,0.526000)(0.402500,0.472000)(0.407500,0.514000)(0.412500,0.528000)(0.417500,0.520000)(0.422500,0.574000)(0.427500,0.626000)(0.432500,0.574000)(0.437500,0.596000)(0.442500,0.552000)(0.447500,0.568000)(0.452500,0.558000)(0.457500,0.562000)(0.462500,0.534000)(0.467500,0.518000)(0.472500,0.482000)(0.477500,0.558000)(0.482500,0.492000)(0.487500,0.440000)(0.492500,0.418000)(0.497500,0.372000)(0.502500,0.000000)
};      
      \addplot[black,smooth,line width=0.5pt] plot coordinates{
(0.000000,0.006665)(0.005000,0.008734)(0.010000,0.012238)(0.015000,0.019337)(0.020000,0.041263)(0.025000,3.551522)(0.030000,6.804512)(0.035000,7.600604)(0.040000,7.702471)(0.045000,7.509671)(0.050000,7.176730)(0.055000,6.772331)(0.060000,6.330288)(0.065000,5.872332)(0.070000,5.416658)(0.075000,4.978353)(0.080000,4.567598)(0.085000,4.191569)(0.090000,3.858576)(0.095000,3.572921)(0.100000,3.324395)(0.105000,3.095118)(0.110000,2.872545)(0.115000,2.653248)(0.120000,2.444639)(0.125000,2.265590)(0.130000,2.128714)(0.135000,2.021932)(0.140000,1.926884)(0.145000,1.832782)(0.150000,1.734502)(0.155000,1.629249)(0.160000,1.514790)(0.165000,1.388412)(0.170000,1.245803)(0.175000,1.078865)(0.180000,0.868977)(0.185000,0.549135)(0.190000,0.172084)(0.195000,0.599517)(0.200000,0.748511)(0.205000,0.835156)(0.210000,0.890364)(0.215000,0.925949)(0.220000,0.947870)(0.225000,0.959623)(0.230000,0.963446)(0.235000,0.960838)(0.240000,0.952870)(0.245000,0.940316)(0.250000,0.923741)(0.255000,0.903544)(0.260000,0.880011)(0.265000,0.853323)(0.270000,0.823566)(0.275000,0.790728)(0.280000,0.754717)(0.285000,0.715296)(0.290000,0.672110)(0.295000,0.624554)(0.300000,0.571710)(0.305000,0.512059)(0.310000,0.442940)(0.315000,0.358962)(0.320000,0.245174)(0.325000,0.006908)(0.330000,0.002432)(0.335000,0.001838)(0.340000,0.001627)(0.345000,0.001602)(0.350000,0.001814)(0.355000,0.003272)(0.360000,0.142960)(0.365000,0.216071)(0.370000,0.263835)(0.375000,0.299448)(0.380000,0.327469)(0.385000,0.350123)(0.390000,0.368764)(0.395000,0.384235)(0.400000,0.397132)(0.405000,0.407888)(0.410000,0.416822)(0.415000,0.424195)(0.420000,0.430208)(0.425000,0.435023)(0.430000,0.438771)(0.435000,0.441570)(0.440000,0.443510)(0.445000,0.444670)(0.450000,0.445116)(0.455000,0.444903)(0.460000,0.444091)(0.465000,0.442726)(0.470000,0.440833)(0.475000,0.438455)(0.480000,0.435616)(0.485000,0.432351)(0.490000,0.428665)(0.495000,0.424595)(0.500000,0.420147)
};

      
    \end{axis}
  \end{tikzpicture}
  }
}
&

  \subfigure [\scriptsize $N=50$, $n=200$]{  
   \adjustbox{width=.3\textwidth}{     
    \begin{tikzpicture}[font=\footnotesize]
    \renewcommand{\axisdefaulttryminticks}{4} 
    \tikzstyle{every major grid}+=[style=densely dashed]       
    \tikzstyle{every axis y label}+=[yshift=-10pt, anchor=near ticklabel] 
    \tikzstyle{every axis x label}+=[yshift=5pt]
    \tikzstyle{every axis legend}+=[cells={anchor=west},fill=white,
        at={(0.98,0.98)}, anchor=north east, font=\scriptsize ]
    /xlabel near ticks
    /ylabel near ticks
    \begin{axis}[
      xmin=0,
      ymin=0,
      xmax=0.5,
      bar width=1.5pt,
      grid=major,
      ymajorgrids=false,
      scaled ticks=true,
      mark repeat=3,
      ]
      \addplot[ybar,mark=none,color=gray,fill=white] coordinates{
(0.002500,0.000000)(0.007500,0.000000)(0.012500,0.000000)(0.017500,0.006000)(0.022500,3.266000)(0.027500,7.440000)(0.032500,8.170000)(0.037500,8.300000)(0.042500,7.972000)(0.047500,7.524000)(0.052500,7.102000)(0.057500,6.596000)(0.062500,6.090000)(0.067500,5.500000)(0.072500,5.126000)(0.077500,4.776000)(0.082500,4.284000)(0.087500,4.040000)(0.092500,3.588000)(0.097500,3.374000)(0.102500,3.212000)(0.107500,2.948000)(0.112500,2.762000)(0.117500,2.528000)(0.122500,2.504000)(0.127500,2.130000)(0.132500,2.154000)(0.137500,1.938000)(0.142500,1.926000)(0.147500,1.692000)(0.152500,1.588000)(0.157500,1.558000)(0.162500,1.414000)(0.167500,1.450000)(0.172500,1.250000)(0.177500,1.136000)(0.182500,1.278000)(0.187500,1.122000)(0.192500,0.890000)(0.197500,0.992000)(0.202500,1.066000)(0.207500,0.908000)(0.212500,0.834000)(0.217500,0.874000)(0.222500,0.894000)(0.227500,0.836000)(0.232500,0.716000)(0.237500,0.634000)(0.242500,0.600000)(0.247500,0.740000)(0.252500,0.782000)(0.257500,0.730000)(0.262500,0.574000)(0.267500,0.490000)(0.272500,0.546000)(0.277500,0.558000)(0.282500,0.684000)(0.287500,0.636000)(0.292500,0.540000)(0.297500,0.452000)(0.302500,0.406000)(0.307500,0.488000)(0.312500,0.478000)(0.317500,0.480000)(0.322500,0.570000)(0.327500,0.504000)(0.332500,0.420000)(0.337500,0.466000)(0.342500,0.368000)(0.347500,0.352000)(0.352500,0.338000)(0.357500,0.336000)(0.362500,0.428000)(0.367500,0.400000)(0.372500,0.438000)(0.377500,0.394000)(0.382500,0.350000)(0.387500,0.350000)(0.392500,0.310000)(0.397500,0.208000)(0.402500,0.236000)(0.407500,0.250000)(0.412500,0.228000)(0.417500,0.304000)(0.422500,0.330000)(0.427500,0.346000)(0.432500,0.324000)(0.437500,0.386000)(0.442500,0.340000)(0.447500,0.274000)(0.452500,0.296000)(0.457500,0.254000)(0.462500,0.222000)(0.467500,0.202000)(0.472500,0.176000)(0.477500,0.152000)(0.482500,0.164000)(0.487500,0.166000)(0.492500,0.218000)(0.497500,0.200000)(0.502500,0.000000)
};      
      \addplot[black,smooth,line width=0.5pt] plot coordinates{
(0.000000,0.007703)(0.005000,0.010325)(0.010000,0.015019)(0.015000,0.025643)(0.020000,0.073884)(0.025000,5.812084)(0.030000,7.711974)(0.035000,8.112286)(0.040000,8.004870)(0.045000,7.682186)(0.050000,7.262971)(0.055000,6.803031)(0.060000,6.331365)(0.065000,5.864381)(0.070000,5.412301)(0.075000,4.982212)(0.080000,4.579245)(0.085000,4.207993)(0.090000,3.873644)(0.095000,3.580189)(0.100000,3.325724)(0.105000,3.101912)(0.110000,2.899174)(0.115000,2.710762)(0.120000,2.533076)(0.125000,2.364732)(0.130000,2.206298)(0.135000,2.060109)(0.140000,1.928604)(0.145000,1.811642)(0.150000,1.705937)(0.155000,1.607669)(0.160000,1.514833)(0.165000,1.427324)(0.170000,1.345709)(0.175000,1.270253)(0.180000,1.200807)(0.185000,1.137086)(0.190000,1.079153)(0.195000,1.027821)(0.200000,0.983514)(0.205000,0.944747)(0.210000,0.909266)(0.215000,0.875839)(0.220000,0.844133)(0.225000,0.813747)(0.230000,0.784132)(0.235000,0.755377)(0.240000,0.728616)(0.245000,0.705110)(0.250000,0.684629)(0.255000,0.665589)(0.260000,0.646669)(0.265000,0.627666)(0.270000,0.609352)(0.275000,0.592667)(0.280000,0.577675)(0.285000,0.563465)(0.290000,0.549023)(0.295000,0.533884)(0.300000,0.518326)(0.305000,0.503334)(0.310000,0.490137)(0.315000,0.479180)(0.320000,0.469659)(0.325000,0.460321)(0.330000,0.450253)(0.335000,0.439060)(0.340000,0.426812)(0.345000,0.414079)(0.350000,0.401977)(0.355000,0.391821)(0.360000,0.384092)(0.365000,0.377954)(0.370000,0.372133)(0.375000,0.365683)(0.380000,0.358076)(0.385000,0.349044)(0.390000,0.338485)(0.395000,0.326453)(0.400000,0.313317)(0.405000,0.300280)(0.410000,0.290158)(0.415000,0.285780)(0.420000,0.285789)(0.425000,0.286994)(0.430000,0.287652)(0.435000,0.287143)(0.440000,0.285289)(0.445000,0.282055)(0.450000,0.277444)(0.455000,0.271466)(0.460000,0.264114)(0.465000,0.255379)(0.470000,0.245301)(0.475000,0.234123)(0.480000,0.222904)(0.485000,0.214867)(0.490000,0.213560)(0.495000,0.216620)(0.500000,0.220404)
};

    \end{axis}
  \end{tikzpicture}
  }
} &

\subfigure[\scriptsize $N=100$, $n=400$] {
  \adjustbox{width=.3\textwidth}{  
  \begin{tikzpicture}[font=\footnotesize]
    \renewcommand{\axisdefaulttryminticks}{4} 
    \tikzstyle{every major grid}+=[style=densely dashed]       
    \tikzstyle{every axis y label}+=[yshift=-10pt, anchor=near ticklabel] 
    \tikzstyle{every axis x label}+=[yshift=5pt]
    \tikzstyle{every axis legend}+=[cells={anchor=west},fill=white,
        at={(0.98,0.98)}, anchor=north east, font=\scriptsize ]
    /xlabel near ticks
    /ylabel near ticks
    \begin{axis}[
      xmin=0,
      ymin=0,
      xmax=0.5,
      bar width=1.5pt,
      grid=major,
      ymajorgrids=false,
      scaled ticks=true,
      mark repeat=3,
      ]
      \addplot[ybar,mark=none,color=gray,fill=white] coordinates{
(0.002500,0.000000)(0.007500,0.000000)(0.012500,0.000000)(0.017500,0.000000)(0.022500,2.958000)(0.027500,7.334000)(0.032500,8.126000)(0.037500,8.212000)(0.042500,7.892000)(0.047500,7.566000)(0.052500,7.112000)(0.057500,6.612000)(0.062500,6.094000)(0.067500,5.770000)(0.072500,5.242000)(0.077500,4.888000)(0.082500,4.420000)(0.087500,4.154000)(0.092500,3.770000)(0.097500,3.514000)(0.102500,3.230000)(0.107500,2.990000)(0.112500,2.728000)(0.117500,2.560000)(0.122500,2.404000)(0.127500,2.200000)(0.132500,2.066000)(0.137500,2.020000)(0.142500,1.816000)(0.147500,1.614000)(0.152500,1.622000)(0.157500,1.488000)(0.162500,1.452000)(0.167500,1.438000)(0.172500,1.232000)(0.177500,1.266000)(0.182500,1.194000)(0.187500,1.158000)(0.192500,1.020000)(0.197500,1.086000)(0.202500,0.992000)(0.207500,0.964000)(0.212500,0.874000)(0.217500,0.806000)(0.222500,0.786000)(0.227500,0.814000)(0.232500,0.696000)(0.237500,0.750000)(0.242500,0.776000)(0.247500,0.668000)(0.252500,0.666000)(0.257500,0.654000)(0.262500,0.612000)(0.267500,0.616000)(0.272500,0.676000)(0.277500,0.518000)(0.282500,0.554000)(0.287500,0.582000)(0.292500,0.554000)(0.297500,0.438000)(0.302500,0.506000)(0.307500,0.496000)(0.312500,0.502000)(0.317500,0.374000)(0.322500,0.488000)(0.327500,0.466000)(0.332500,0.468000)(0.337500,0.388000)(0.342500,0.366000)(0.347500,0.392000)(0.352500,0.464000)(0.357500,0.400000)(0.362500,0.344000)(0.367500,0.336000)(0.372500,0.358000)(0.377500,0.364000)(0.382500,0.386000)(0.387500,0.328000)(0.392500,0.308000)(0.397500,0.284000)(0.402500,0.294000)(0.407500,0.348000)(0.412500,0.338000)(0.417500,0.338000)(0.422500,0.234000)(0.427500,0.252000)(0.432500,0.272000)(0.437500,0.224000)(0.442500,0.258000)(0.447500,0.342000)(0.452500,0.268000)(0.457500,0.282000)(0.462500,0.260000)(0.467500,0.238000)(0.472500,0.186000)(0.477500,0.274000)(0.482500,0.260000)(0.487500,0.266000)(0.492500,0.272000)(0.497500,0.232000)(0.502500,0.000000)
};      
      \addplot[black,smooth,line width=0.5pt] plot coordinates{
(0.000000,0.007787)(0.005000,0.010448)(0.010000,0.015227)(0.015000,0.026121)(0.020000,0.077266)(0.025000,5.904353)(0.030000,7.746526)(0.035000,8.132547)(0.040000,8.023658)(0.045000,7.705679)(0.050000,7.295002)(0.055000,6.846320)(0.060000,6.388010)(0.065000,5.935590)(0.070000,5.497769)(0.075000,5.079617)(0.080000,4.684783)(0.085000,4.316679)(0.090000,3.978016)(0.095000,3.669369)(0.100000,3.388527)(0.105000,3.132232)(0.110000,2.898209)(0.115000,2.685435)(0.120000,2.492799)(0.125000,2.318000)(0.130000,2.158253)(0.135000,2.011871)(0.140000,1.878626)(0.145000,1.758782)(0.150000,1.652351)(0.155000,1.559222)(0.160000,1.478931)(0.165000,1.409560)(0.170000,1.347614)(0.175000,1.289586)(0.180000,1.233130)(0.185000,1.177146)(0.190000,1.121462)(0.195000,1.066604)(0.200000,1.013560)(0.205000,0.963491)(0.210000,0.917411)(0.215000,0.875925)(0.220000,0.839121)(0.225000,0.806573)(0.230000,0.777436)(0.235000,0.750709)(0.240000,0.725631)(0.245000,0.701888)(0.250000,0.679537)(0.255000,0.658763)(0.260000,0.639656)(0.265000,0.622131)(0.270000,0.605865)(0.275000,0.590319)(0.280000,0.574950)(0.285000,0.559446)(0.290000,0.543811)(0.295000,0.528324)(0.300000,0.513398)(0.305000,0.499334)(0.310000,0.486160)(0.315000,0.473694)(0.320000,0.461739)(0.325000,0.450175)(0.330000,0.438942)(0.335000,0.428049)(0.340000,0.417598)(0.345000,0.407726)(0.350000,0.398463)(0.355000,0.389664)(0.360000,0.381112)(0.365000,0.372661)(0.370000,0.364265)(0.375000,0.355917)(0.380000,0.347599)(0.385000,0.339325)(0.390000,0.331237)(0.395000,0.323660)(0.400000,0.316971)(0.405000,0.311294)(0.410000,0.306361)(0.415000,0.301742)(0.420000,0.297118)(0.425000,0.292378)(0.430000,0.287562)(0.435000,0.282748)(0.440000,0.277949)(0.445000,0.273101)(0.450000,0.268121)(0.455000,0.263003)(0.460000,0.257877)(0.465000,0.253017)(0.470000,0.248737)(0.475000,0.245181)(0.480000,0.242201)(0.485000,0.239476)(0.490000,0.236717)(0.495000,0.233767)(0.500000,0.230608)
};      
\legend{ Histogram of ${\rm E}[F^{\Chat}]$, Density of $F^\rnd_N$ }
    \end{axis}
  \end{tikzpicture}
  }
}

\end{tabular}
\caption{Density of $F^\rnd_N$ versus histogram of ${\rm E}[F^{\Chat}]$ for $\C$ with $[\C]_{ij}=.9^{|i-j|}$, $\D=\I$, $\varepsilon=.05$, and $u(x) = (1+t)/(t+x)$ where $t=.1$.}
\label{fig:validation_density}
\end{figure*}
Assuming $\a_1,\ldots,\a_{\varepsilon_nn}$ to be independent with zero mean and covariance $\D\neq \C$ provides a rather immediate corollary of Theorem~\ref{thm1}, given below. In the results to come, to differentiate between the conditions of Theorem~\ref{thm1} and those of Corollary~\ref{cor1}, we shall use the subscript ``$\rnd$'' standing for ``random outliers scenario''.

\begin{corollary}[Random Outliers] Let Assumption~\ref{as:cN} hold with $\varepsilon >0$ and let $\a_1,\ldots,\a_{\varepsilon_n n}$ be random independent of the $\y_i$'s with $\a_i= \Dhalf \x'_i$, where  $\D \in  \mathbb{C}^{N \times N}$ is deterministic Hermitian positive definite and $\x'_1,\ldots, \x'_{\varepsilon_n n}$ are independent random vectors with i.i.d.\@ zero mean, unit variance, and finite $(8+\eta)$-th order moment entries, for some $\eta>0$. Let us further assume that
$\limsup_N \| \D \C^{-1} \| < \infty$. Then, as $n \to \infty$,
\begin{align*}
\left\| \Chat -  \ShatRnd \right\| \toas 0 
\end{align*}
where
\begin{align*}
 \ShatRnd & \triangleq v\left( \gammaEqRnd \right) \frac{1}{n} \sum_{i=1}^{(1-\varepsilon_n)n}  \y_i \y_i^\dagger + v\left( \alphaEqRnd \right) \frac{1}{n} \sum_{i=1}^{\varepsilon_n n}  \a_i \a_i^\dagger
\end{align*}
with $\gammaEqRnd$ and $\alphaEqRnd$ the unique positive solutions to
\begin{align*}
	\gammaEqRnd &= \frac{1}{N} \tr \C \left(  \frac{(1-\varepsilon) v(\gammaEqRnd)\C}{1+c v(\gammaEqRnd) \gammaEqRnd} + \frac{\varepsilon v(\alphaEqRnd)\D}{1+c v(\alphaEqRnd) \alphaEqRnd} \right)^{-1} \\
\alphaEqRnd &= \frac{1}{N} \tr \D \left( \frac{(1-\varepsilon) v(\gammaEqRnd)\C}{1+c v(\gammaEqRnd) \gammaEqRnd} + \frac{\varepsilon v(\alphaEqRnd)\D}{1+c v(\alphaEqRnd) \alphaEqRnd} \right)^{-1}.
\end{align*}

In particular, for $F_N^{\Chat}(x)$ as defined in Corollary~\ref{cor2},
\begin{align}
F_N^{\Chat} (x) - F_N^\rnd (x) \Rightarrow 0
\notag
\end{align}
almost surely as $n \to \infty$, where $F_N^\rnd (x)$ is a real distribution function with density, defined via its Stieltjes transform
\begin{align}
	m_N^\rnd(z) &= \frac1N\tr\E^{-1} \notag \\
	\E &= \frac{(1-\varepsilon) v(\gammaEqRnd)}{1  +  e_{N,1}(z)}\C  + \frac{\varepsilon v(\alphaEqRnd)}{1  +  e_{N,2}(z)}\D \nonumber
\end{align}
for $z \in \mathbb{C}^+$ and $(e_{N,1}(z),e_{N,2}(z))$ the unique solution in $(\mathbb{C}^+)^2$ to
\begin{align}
	e_{N,1}(z) &= \frac{v(\gammaEqRnd)}n\tr \C  \left( \E  -  z \I  \right)^{ -1} \notag \\
	e_{N,2}(z) &= \frac{v(\alphaEqRnd)}n \tr  \D  \left( \E  -  z \I  \right)^{ -1}.
	\notag
\end{align}
\label{cor1}
\end{corollary}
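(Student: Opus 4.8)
The plan is to obtain Corollary~\ref{cor1} as a specialization of Theorem~\ref{thm1}. First I would check that the random-outlier hypotheses meet the standing assumptions of the theorem: since $\a_i=\Dhalf\x'_i$ with i.i.d.\ normalized entries, the empirical matrix $\frac1n\sum_{i=1}^{\eps n}\Chalfm\a_i\a_i^\dagger\Chalfm$ converges a.s.\ (in operator norm) to $\eps\,\Chalfm\D\Chalfm$, whose norm is controlled by $\limsup_N\|\D\C^{-1}\|<\infty$; hence Assumption~\ref{as:C} holds almost surely and Theorem~\ref{thm1} applies, giving $\|\Chat-\Shat\|\toas0$. It then suffices to prove $\|\Shat-\ShatRnd\|\toas0$. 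Writing the difference as $[v(\gammaEq)-v(\gammaEqRnd)]\frac1n\sum_i\y_i\y_i^\dagger+\frac1n\sum_i[v(\alphaEqi)-v(\alphaEqRnd)]\a_i\a_i^\dagger$ and using $\|\sum_i\lambda_i\a_i\a_i^\dagger\|\le(\max_i|\lambda_i|)\,\|\sum_i\a_i\a_i^\dagger\|$ together with the a.s.\ boundedness of both empirical covariances, it is enough to establish $\gammaEq-\gammaEqRnd\toas0$ and $\max_{1\le i\le\eps n}|\alphaEqi-\alphaEqRnd|\toas0$, where $(\gammaEqRnd,\alphaEqRnd)$ solve the stated $2\times2$ system and $v$ is continuous.

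The core step collapses the per-outlier weights. Fixing $i$ and letting $\mathbf{M}_i$ denote the matrix inside the inverse in the $\alphaEqi$-equation of \eqref{gammaThm}, $\mathbf{M}_i$ is independent of $\a_i$; since $\a_i=\Dhalf\x'_i$ has independent normalized entries with finite $(8+\eta)$-th moment, the standard quadratic-form (trace) concentration lemma yields $\alphaEqi-\frac1N\tr\D\mathbf{M}_i^{-1}\toas0$, the relevant operator $\Dhalf\mathbf{M}_i^{-1}\Dhalf$ being bounded because $\mathbf{M}_i\succeq\frac{(1-\eps)v(\gammaEq)}{1+cv(\gammaEq)\gammaEq}\C$ and $\limsup_N\|\D\C^{-1}\|<\infty$. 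A rank-one perturbation argument (cf.\ \cite[Lemma~2.6]{Silverstein1995a}) then replaces $\mathbf{M}_i$ by the common matrix $\bar{\mathbf{M}}=\frac{(1-\eps)v(\gammaEq)}{1+cv(\gammaEq)\gammaEq}\C+\frac1n\sum_jv(\alphaEqj)\a_j\a_j^\dagger$ up to $\O(1/N)$, so that the right-hand side $\frac1N\tr\D\bar{\mathbf{M}}^{-1}$ no longer depends on $i$; uniformity over the $\O(n)$ indices is secured by a union bound exploiting the moment assumption. With the weights thus asymptotically common and equal to $v(\alphaEqRnd)$, the outlier block $\frac1n\sum_jv(\alphaEqj)\a_j\a_j^\dagger$ becomes a scaled sample covariance of the $\a_j$, whose Bai--Silverstein deterministic equivalent inside traces against $\C$ and $\D$ is $\frac{\eps v(\alphaEqRnd)\D}{1+cv(\alphaEqRnd)\alphaEqRnd}$, the self-consistent denominator being the companion correction of the Marchenko--Pastur law. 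Substituting this equivalent into the $\gammaEq$- and $\alphaEqi$-equations of \eqref{gammaThm} produces exactly the announced coupled system for $(\gammaEqRnd,\alphaEqRnd)$.

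Because the weights and $\bar{\mathbf{M}}$ are mutually coupled, the concentration above and the closure of the fixed-point system must be carried out \emph{together}; this is where I expect the main technical obstacle to lie. I would resolve it by a contraction/standard-function argument in the spirit of \cite{Couillet2013}: the map defining the reduced system is shown to be monotone and to admit a unique positive solution $(\gammaEqRnd,\alphaEqRnd)$, after which the random solution of \eqref{gammaThm} is identified as a vanishing perturbation of it, yielding $\gammaEq-\gammaEqRnd\toas0$ and $\max_i|\alphaEqi-\alphaEqRnd|\toas0$ by continuity of $v$ and the established concentration. The boundedness needed throughout follows from $\bar{\mathbf{M}}\succeq\frac{(1-\eps)v(\gammaEq)}{1+cv(\gammaEq)\gammaEq}\C$ and $\limsup_N\|\D\C^{-1}\|<\infty$. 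This completes the proof of $\|\Chat-\ShatRnd\|\toas0$.

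Finally, for the spectral statement, once $\|\Chat-\ShatRnd\|\toas0$ is established the two empirical spectral distributions satisfy $F_N^{\Chat}-F_N^{\ShatRnd}\Rightarrow0$ almost surely (via \cite[Thm.~4.3.7]{Horn1985}, as in Theorem~\ref{thm1}), so it remains to characterize the limiting ESD of $\ShatRnd=v(\gammaEqRnd)\frac1n\sum_i\y_i\y_i^\dagger+v(\alphaEqRnd)\frac1n\sum_i\a_i\a_i^\dagger$. This is a sum of two independent sample covariance matrices with population covariances $\C$ and $\D$ and deterministic weights, so the deterministic equivalent of its resolvent $(\ShatRnd-z\I)^{-1}$ is the standard two-population Marchenko--Pastur result, giving $m_N^\rnd(z)=\frac1N\tr\E^{-1}$ with $\E$ and the coupled pair $(e_{N,1}(z),e_{N,2}(z))$ exactly as stated; I would quote this from the separable/multiple-population deterministic-equivalent literature (e.g.\ \cite{Silverstein1995a}) rather than reprove it.
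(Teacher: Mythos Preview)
Your proposal is correct and follows essentially the same route as the paper, only far more explicitly. The paper's own proof (a single paragraph at the end of Appendix~\ref{app:det}) merely notes that Assumption~\ref{as:C} holds a.s.\ by \cite{Bai1998} under $\limsup_N\|\D\C^{-1}\|<\infty$, after which Theorem~\ref{thm1} applies and ``the result may be straightforwardly obtained from, e.g., \cite[Thm.~1]{Wagner2012}''; your plan is precisely a fleshed-out version of this, correctly identifying that the substantive work is the collapse $\max_i|\alphaEqi-\alphaEqRnd|\toas0$ via trace concentration plus rank-one perturbation, and rightly flagging the coupling between the weights and $\bar{\mathbf M}$ as the place where care is needed. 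One small correction: for the final spectral statement you should cite \cite[Thm.~1]{Wagner2012} (reproduced in the paper as Theorem~\ref{moments:thm1}) rather than \cite{Silverstein1995a}, since it is the former that directly handles the sum-of-two-populations model $\ShatRnd$ and yields the stated coupled system for $(e_{N,1},e_{N,2})$.
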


Figure~\ref{fig:validation_density} shows the density of the distribution ${\rm E}[F_N^{\Chat}]$, obtained from Monte-Carlo averaging, versus $F_N^\rnd$ for different values of $N,n$. It is observed that, as soon as $N$ is of the order of several tens, the asymptotic approximation holds tightly. The (normalized) distance in spectral norm between $\Chat$ and $\Shat^\rnd$ is numerically evaluated in Figure~\ref{fig:validation_spectralNorm} for various values of $N$. As suggested in the second order analysis of \cite{COU14d}, $\|\Chat-\Shat\|$ (or $\|\Chat-\Shat^\rnd\|$ here) is likely to decay at the rate $1/\sqrt{N}$
, which is somewhat confirmed by observing that between $N=20$ and $N=80$, the approximation error decays by a factor of two (precisely, $0.042$ versus $0.019$).

\begin{figure}[t!]
  \centering
  \begin{tikzpicture}[font=\footnotesize]
    \renewcommand{\axisdefaulttryminticks}{4} 
    \tikzstyle{every major grid}+=[style=densely dashed]       
    \tikzstyle{every axis y label}+=[yshift=-10pt, anchor=near ticklabel] 
    \tikzstyle{every axis x label}+=[yshift=5pt]
    \tikzstyle{every axis legend}+=[cells={anchor=west},fill=white,
        at={(0.98,0.98)}, anchor=north east, font=\scriptsize ]
    /xlabel near ticks
    /ylabel near ticks 
   \begin{axis}[
      grid=major,
      xlabel={$N$},
      ytick={0,0.05,0.1,0.15,0.2},
      yticklabels = {$0$,$0.05$,$0.10$,$0.15$,$0.20$},
      ymin=0,
      ymax=0.2
      ]
      \addplot[color=black,mark=*,error bars/.cd, y dir=both, y explicit] coordinates {


(10.000000,0.113377) +- (0,0.06000833275471)
(20.000000,0.058173) +- (0,0.01913112646971)
(40.000000,0.037772) +- (0,0.01024695076596)
(60.000000,0.027348) +- (0,0.0059160797831)
(80.000000,0.023611) +- (0,0.004242640687119)
(100.000000,0.020496) +- (0,0.003316624790355)

    
    };
\end{axis}    
  \end{tikzpicture}
\caption{Mean and standard deviation (error bars) of $\| \Chat -  \ShatRnd \| / \| \Chat \|$ for $\c = 0.25$, $[\C]_{ij}=.9^{|i-j|}$, $[\D]_{ij}=.2^{|i-j|}$, $\eps=.05$, and $u(x) = (1+t)/(t+x)$, with $t=.1$. }
  \label{fig:validation_spectralNorm}
\end{figure}

In the random outliers scenario, $\Chat$ is asymptotically equivalent to the weighted sum of two partial sample covariance matrices, one corresponding to the legitimate data and the other to the outlying data. In the defining equations for $\gamma^\rnd_n$ and $\alpha^\rnd_n$ an interesting symmetrical interplay arises between the weights applied to the legitimate and the outlying data, which are only differentiated by $\varepsilon$. In particular, if $\varepsilon>1/2$, the $\a_i$'s will be considered legitimate (being in majority) and the $\y_i$'s become outliers. 

Despite the symmetrical form of the equations defining $\gamma^\rnd_n$ and $\alpha^\rnd_n$, it remains difficult to extract general insight on these quantities. Thus, again, it is interesting to study the regime where $\varepsilon \to 0$.
In this case, $\gammaEqRnd \to \gamma =\phi^{-1}(1)/(1-c)$, and
\begin{align*}
	\alphaEqRnd & \to \gamma \frac1N \tr \D \C^{-1}.
\end{align*}
As such, the factor dictating the outlier mitigation strength of $\Chat$ is now $\frac1N \tr \D \C^{-1}$. Similar to before, when larger than one, the impact of the outliers will be reduced but these might be enhanced when smaller than one. Interestingly, if $\frac1N\tr\D=\frac1N\tr\C=1$ (say),
both legitimate and outlier samples have similar norm for all large $n$. As such, under this scenario, the SCM $\frac1n\Y\Y^\dagger$ or its normalized version $\frac1n\Y^{\rm n}{\Y^{\rm n}}^\dagger$ behave asymptotically equivalently, neither of which being capable of differentiating between legitimate and outlier data. On the contrary, $\Chat$ is capable of reducing the impact of the outliers as long as $\frac1N \tr \D \C^{-1}>1$. Note here again that $\C$ must be sufficiently distinct from $\I$, which would otherwise entail $\frac1N \tr \D \C^{-1}\simeq 1$ and thus $\Chat$ would be indifferent to outliers. Also, similar to previously, $u$ must be well chosen to avoid enhancing the outlier effect if $\frac1N \tr \D \C^{-1}<1$ (so in particular it is advised that $u$ be similar to $u_\hub$).

Figure~\ref{fig:limit_laws} depicts the previous observations in terms of the deterministic equivalent spectral distributions: $F^\rnd_N$ of $\Chat$, $F^{\rm SCM}_N$ of $\frac1n\Y\Y^\dagger$ (or $F^{\rm nSCM}_N$ of $\frac1n{\Y^{\rm n}}{\Y^{\rm n}}^\dagger$ which satisfies $F^{\rm SCM}_N=F^{\rm nSCM}_N$ here), and $F_N^{\rm oracle}$ of the outlier-free oracle estimator $\frac1n{\Y^{\rm o}}{\Y^{\rm o}}^\dagger$; we take here $\C$ and $\D$ to ensure $\frac1N \tr \D \C^{-1}$ large and $\varepsilon$ is taken small. The sought-for distribution that would optimally discard all outliers is the oracle distribution and, thus, highly robust estimators are expected to have a similar distribution. Figure~\ref{fig:limit_laws} confirms that this is indeed the case of $\Chat$ which shows a close tail behavior but is slightly mismatched in the main distribution lobe. On the contrary, the SCM (normalized or not) shows a strong decay in the main lobe and a non matching tail. The associated theoretical values of $\gamma_n^\rnd$ and $\alpha_n^\rnd$ for $\eps=.05$ are here $v_\hub (\gammaEqRnd) \simeq 1.00$, $v_\hub (\alphaEqRnd) \simeq .1219$, while in the limit $\eps\to 0$, these values become $v_\hub (\gammaEqRnd)\to 1$ and $v_\hub (\alphaEqRnd)\to .1179$.

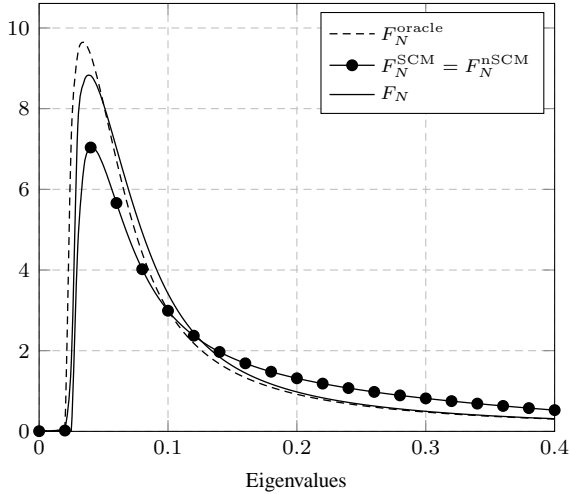
\begin{figure}[t!]
  \centering
  \begin{tikzpicture}[font=\footnotesize]
    \renewcommand{\axisdefaulttryminticks}{4} 
    \tikzstyle{every major grid}+=[style=densely dashed]       
    \tikzstyle{every axis y label}+=[yshift=-10pt, anchor=near ticklabel] 
    \tikzstyle{every axis x label}+=[yshift=5pt]
    \tikzstyle{every axis legend}+=[cells={anchor=west},fill=white,
        at={(0.98,0.98)}, anchor=north east, font=\scriptsize ]
    /xlabel near ticks
    /ylabel near ticks
    \begin{axis}[
      xmin=0,
      ymin=0,
      xmax=0.4,
      bar width=1.5pt,
      grid=major,
      scaled ticks=true,
      mark repeat=4,
      xlabel={Eigenvalues},
      ]
      \addplot[black,smooth,densely dashed,line width=0.5pt] plot coordinates{
(0.000000,0.008923)(0.005000,0.012100)(0.010000,0.017884)(0.015000,0.031358)(0.020000,0.098660)(0.025000,7.333332)(0.030000,9.356286)(0.035000,9.643682)(0.040000,9.334713)(0.045000,8.775691)(0.050000,8.111112)(0.055000,7.412788)(0.060000,6.721767)(0.065000,6.063956)(0.070000,5.455822)(0.075000,4.906373)(0.080000,4.418371)(0.085000,3.989870)(0.090000,3.616002)(0.095000,3.290562)(0.100000,3.007117)(0.105000,2.759633)(0.110000,2.542760)(0.115000,2.351898)(0.120000,2.183159)(0.125000,2.033291)(0.130000,1.899578)(0.135000,1.779756)(0.140000,1.671933)(0.145000,1.574523)(0.150000,1.486191)(0.155000,1.405809)(0.160000,1.332420)(0.165000,1.265207)(0.170000,1.203470)(0.175000,1.146607)(0.180000,1.094098)(0.185000,1.045490)(0.190000,1.000392)(0.195000,0.958457)(0.200000,0.919388)(0.205000,0.882910)(0.210000,0.848800)(0.215000,0.816835)(0.220000,0.786840)(0.225000,0.758655)(0.230000,0.732099)(0.235000,0.707094)(0.240000,0.683481)(0.245000,0.661125)(0.250000,0.640029)(0.255000,0.620026)(0.260000,0.600971)(0.265000,0.582963)(0.270000,0.565894)(0.275000,0.549501)(0.280000,0.533885)(0.285000,0.519195)(0.290000,0.505133)(0.295000,0.491456)(0.300000,0.478438)(0.305000,0.466296)(0.310000,0.454648)(0.315000,0.443114)(0.320000,0.431953)(0.325000,0.421652)(0.330000,0.412061)(0.335000,0.402539)(0.340000,0.392866)(0.345000,0.383554)(0.350000,0.375197)(0.355000,0.367585)(0.360000,0.359941)(0.365000,0.351854)(0.370000,0.343691)(0.375000,0.336296)(0.380000,0.330033)(0.385000,0.324276)(0.390000,0.318161)(0.395000,0.311348)(0.400000,0.304230)(0.405000,0.297750)(0.410000,0.292598)(0.415000,0.288373)(0.420000,0.284079)(0.425000,0.279023)(0.430000,0.273088)(0.435000,0.266758)(0.440000,0.261059)(0.445000,0.256875)(0.450000,0.253954)(0.455000,0.251243)(0.460000,0.247888)(0.465000,0.243517)(0.470000,0.238182)(0.475000,0.232387)(0.480000,0.227178)(0.485000,0.223648)(0.490000,0.221763)(0.495000,0.220435)(0.500000,0.218684)(0.505000,0.216010)(0.510000,0.212249)(0.515000,0.207475)(0.520000,0.202055)(0.525000,0.196863)(0.530000,0.193224)(0.535000,0.191740)(0.540000,0.191480)(0.545000,0.191238)(0.550000,0.190332)(0.555000,0.188478)(0.560000,0.185584)(0.565000,0.181663)(0.570000,0.176850)(0.575000,0.171537)(0.580000,0.166699)(0.585000,0.163846)(0.590000,0.163471)(0.595000,0.164312)(0.600000,0.165118)(0.605000,0.165300)(0.610000,0.164642)(0.615000,0.163075)(0.620000,0.160581)(0.625000,0.157165)(0.630000,0.152865)(0.635000,0.147828)(0.640000,0.142528)(0.645000,0.138264)(0.650000,0.136810)(0.655000,0.137923)(0.660000,0.139810)(0.665000,0.141410)(0.670000,0.142355)(0.675000,0.142546)(0.680000,0.141965)(0.685000,0.140613)(0.690000,0.138495)(0.695000,0.135604)(0.700000,0.131925)(0.705000,0.127463)(0.710000,0.122297)(0.715000,0.116855)(0.720000,0.112692)(0.725000,0.112161)(0.730000,0.114429)(0.735000,0.117196)(0.740000,0.119530)(0.745000,0.121214)(0.750000,0.122222)(0.755000,0.122578)(0.760000,0.122308)(0.765000,0.121433)(0.770000,0.119962)(0.775000,0.117890)(0.780000,0.115201)(0.785000,0.111860)(0.790000,0.107814)(0.795000,0.103016)(0.800000,0.097481)(0.805000,0.091718)(0.810000,0.088407)(0.815000,0.090263)(0.820000,0.093958)(0.825000,0.097364)(0.830000,0.100102)(0.835000,0.102176)(0.840000,0.103642)(0.845000,0.104557)(0.850000,0.104967)(0.855000,0.104905)(0.860000,0.104391)(0.865000,0.103437)(0.870000,0.102046)(0.875000,0.100208)(0.880000,0.097901)(0.885000,0.095094)(0.890000,0.091730)(0.895000,0.087722)(0.900000,0.082948)(0.905000,0.077252)(0.910000,0.070614)(0.915000,0.065786)(0.920000,0.068965)(0.925000,0.073921)(0.930000,0.078055)(0.935000,0.081340)(0.940000,0.083921)(0.945000,0.085919)(0.950000,0.087422)(0.955000,0.088493)(0.960000,0.089175)(0.965000,0.089501)(0.970000,0.089493)(0.975000,0.089168)(0.980000,0.088535)(0.985000,0.087596)(0.990000,0.086348)(0.995000,0.084784)(1.000000,0.082886)};
      \addplot[black,smooth,mark=*,line width=0.5pt] plot coordinates{
(0.000000,0.005069)(0.005000,0.006428)(0.010000,0.008554)(0.015000,0.012300)(0.020000,0.020484)(0.025000,0.052514)(0.030000,4.600278)(0.035000,6.609333)(0.040000,7.037151)(0.045000,6.923871)(0.050000,6.577735)(0.055000,6.133501)(0.060000,5.660644)(0.065000,5.197263)(0.070000,4.764193)(0.075000,4.370867)(0.080000,4.019751)(0.085000,3.709663)(0.090000,3.437848)(0.095000,3.200235)(0.100000,2.990668)(0.105000,2.806113)(0.110000,2.646563)(0.115000,2.501501)(0.120000,2.371852)(0.125000,2.255449)(0.130000,2.150418)(0.135000,2.055196)(0.140000,1.968476)(0.145000,1.889160)(0.150000,1.816324)(0.155000,1.749182)(0.160000,1.687066)(0.165000,1.629401)(0.170000,1.575693)(0.175000,1.525514)(0.180000,1.478492)(0.185000,1.434302)(0.190000,1.392661)(0.195000,1.353320)(0.200000,1.316061)(0.205000,1.280689)(0.210000,1.247035)(0.215000,1.214946)(0.220000,1.184287)(0.225000,1.154937)(0.230000,1.126789)(0.235000,1.099744)(0.240000,1.073717)(0.245000,1.048628)(0.250000,1.024407)(0.255000,1.000991)(0.260000,0.978322)(0.265000,0.954067)(0.270000,0.932918)(0.275000,0.912304)(0.280000,0.892300)(0.285000,0.872864)(0.290000,0.853961)(0.295000,0.835558)(0.300000,0.817625)(0.305000,0.800133)(0.310000,0.783057)(0.315000,0.766374)(0.320000,0.750060)(0.325000,0.734096)(0.330000,0.718463)(0.335000,0.703142)(0.340000,0.688118)(0.345000,0.673375)(0.350000,0.658902)(0.355000,0.644685)(0.360000,0.630714)(0.365000,0.616980)(0.370000,0.603475)(0.375000,0.590192)(0.380000,0.577126)(0.385000,0.564273)(0.390000,0.551631)(0.395000,0.539196)(0.400000,0.526970)(0.405000,0.514950)(0.410000,0.503140)(0.415000,0.491539)(0.420000,0.480150)(0.425000,0.468976)(0.430000,0.458018)(0.435000,0.447281)(0.440000,0.436768)(0.445000,0.426481)(0.450000,0.416455)(0.455000,0.406602)(0.460000,0.397013)(0.465000,0.387663)(0.470000,0.378553)(0.475000,0.369685)(0.480000,0.361058)(0.485000,0.352742)(0.490000,0.344602)(0.495000,0.336705)(0.500000,0.329046)(0.505000,0.321618)(0.510000,0.314414)(0.515000,0.307436)(0.520000,0.300692)(0.525000,0.294188)(0.530000,0.287921)(0.535000,0.281868)(0.540000,0.275994)(0.545000,0.270273)(0.550000,0.264698)(0.555000,0.259298)(0.560000,0.254122)(0.565000,0.249207)(0.570000,0.244543)(0.575000,0.240067)(0.580000,0.235686)(0.585000,0.231319)(0.590000,0.226934)(0.595000,0.222576)(0.600000,0.218354)(0.605000,0.214410)(0.610000,0.210840)(0.615000,0.207623)(0.620000,0.204618)(0.625000,0.201640)(0.630000,0.198533)(0.635000,0.195208)(0.640000,0.191658)(0.645000,0.187974)(0.650000,0.184352)(0.655000,0.181065)(0.660000,0.178350)(0.665000,0.176245)(0.670000,0.174548)(0.675000,0.172962)(0.680000,0.171232)(0.685000,0.169194)(0.690000,0.166761)(0.695000,0.163910)(0.700000,0.160684)(0.705000,0.157213)(0.710000,0.153763)(0.715000,0.150759)(0.720000,0.148638)(0.725000,0.147503)(0.730000,0.147017)(0.735000,0.146722)(0.740000,0.146284)(0.745000,0.145512)(0.750000,0.144310)(0.755000,0.142630)(0.760000,0.140454)(0.765000,0.137785)(0.770000,0.134652)(0.775000,0.131145)(0.780000,0.127487)(0.785000,0.124162)(0.790000,0.121935)(0.795000,0.121273)(0.800000,0.121750)(0.805000,0.122619)(0.810000,0.123393)(0.815000,0.123847)(0.820000,0.123890)(0.825000,0.123485)(0.830000,0.122622)(0.835000,0.121295)(0.840000,0.119499)(0.845000,0.117226)(0.850000,0.114468)(0.855000,0.111223)(0.860000,0.107519)(0.865000,0.103490)(0.870000,0.099597)(0.875000,0.097003)(0.880000,0.096848)(0.885000,0.098405)(0.890000,0.100378)(0.895000,0.102161)(0.900000,0.103564)(0.905000,0.104543)(0.910000,0.105100)(0.915000,0.105246)(0.920000,0.104997)(0.925000,0.104361)(0.930000,0.103344)(0.935000,0.101944)(0.940000,0.100153)(0.945000,0.097954)(0.950000,0.095321)(0.955000,0.092219)(0.960000,0.088600)(0.965000,0.084425)(0.970000,0.079743)(0.975000,0.075161)(0.980000,0.073198)(0.985000,0.075261)(0.990000,0.078414)(0.995000,0.081317)(1.000000,0.083719)};
      \addplot[black,smooth,line width=0.5pt] plot coordinates{
(0.000000,0.007156)(0.005000,0.009325)(0.010000,0.012927)(0.015000,0.019950)(0.020000,0.039316)(0.025000,1.733698)(0.030000,7.566586)(0.035000,8.685195)(0.040000,8.818828)(0.045000,8.554259)(0.050000,8.108567)(0.055000,7.581408)(0.060000,7.024520)(0.065000,6.467021)(0.070000,5.926500)(0.075000,5.413815)(0.080000,4.935702)(0.085000,4.495999)(0.090000,4.096215)(0.095000,3.736125)(0.100000,3.414058)(0.105000,3.127403)(0.110000,2.872959)(0.115000,2.647334)(0.120000,2.447141)(0.125000,2.269220)(0.130000,2.110709)(0.135000,1.969073)(0.140000,1.842108)(0.145000,1.727907)(0.150000,1.624837)(0.155000,1.531498)(0.160000,1.446692)(0.165000,1.369396)(0.170000,1.298728)(0.175000,1.233934)(0.180000,1.174359)(0.185000,1.119441)(0.190000,1.068688)(0.195000,1.021675)(0.200000,0.978027)(0.205000,0.937420)(0.210000,0.899563)(0.215000,0.864204)(0.220000,0.831120)(0.225000,0.800103)(0.230000,0.770996)(0.235000,0.743608)(0.240000,0.717824)(0.245000,0.693526)(0.250000,0.670539)(0.255000,0.648831)(0.260000,0.628310)(0.265000,0.608783)(0.270000,0.590277)(0.275000,0.572790)(0.280000,0.556067)(0.285000,0.540066)(0.290000,0.524978)(0.295000,0.510654)(0.300000,0.496767)(0.305000,0.483426)(0.310000,0.470936)(0.315000,0.459107)(0.320000,0.447484)(0.325000,0.436106)(0.330000,0.425459)(0.335000,0.415637)(0.340000,0.406091)(0.345000,0.396410)(0.350000,0.386889)(0.355000,0.378178)(0.360000,0.370352)(0.365000,0.362740)(0.370000,0.354761)(0.375000,0.346538)(0.380000,0.338822)(0.385000,0.332206)(0.390000,0.326343)(0.395000,0.320361)(0.400000,0.313736)(0.405000,0.306635)(0.410000,0.299868)(0.415000,0.294292)(0.420000,0.289845)(0.425000,0.285623)(0.430000,0.280804)(0.435000,0.275101)(0.440000,0.268817)(0.445000,0.262832)(0.450000,0.258166)(0.455000,0.254938)(0.460000,0.252232)(0.465000,0.249090)(0.470000,0.245011)(0.475000,0.239929)(0.480000,0.234203)(0.485000,0.228720)(0.490000,0.224651)(0.495000,0.222359)(0.500000,0.220954)(0.505000,0.219361)(0.510000,0.216956)(0.515000,0.213497)(0.520000,0.208993)(0.525000,0.203712)(0.530000,0.198359)(0.535000,0.194180)(0.540000,0.192121)(0.545000,0.191625)(0.550000,0.191438)(0.555000,0.190733)(0.560000,0.189141)(0.565000,0.186532)(0.570000,0.182890)(0.575000,0.178310)(0.580000,0.173090)(0.585000,0.168007)(0.590000,0.164494)(0.595000,0.163506)(0.600000,0.164138)(0.605000,0.165009)(0.610000,0.165372)(0.615000,0.164940)(0.620000,0.163615)(0.625000,0.161369)(0.630000,0.158202)(0.635000,0.154140)(0.640000,0.149289)(0.645000,0.144000)(0.650000,0.139296)(0.655000,0.136996)(0.660000,0.137574)(0.665000,0.139373)(0.670000,0.141084)(0.675000,0.142204)(0.680000,0.142589)(0.685000,0.142207)(0.690000,0.141055)(0.695000,0.139139)(0.700000,0.136456)(0.705000,0.132990)(0.710000,0.128740)(0.715000,0.123754)(0.720000,0.118320)(0.725000,0.113599)(0.730000,0.112016)(0.735000,0.113792)(0.740000,0.116545)(0.745000,0.119013)(0.750000,0.120861)(0.755000,0.122036)(0.760000,0.122554)(0.765000,0.122440)(0.770000,0.121719)(0.775000,0.120403)(0.780000,0.118488)(0.785000,0.115963)(0.790000,0.112799)(0.795000,0.108947)(0.800000,0.104354)(0.805000,0.099007)(0.810000,0.093200)(0.815000,0.088851)(0.820000,0.089523)(0.825000,0.093050)(0.830000,0.096584)(0.835000,0.099487)(0.840000,0.101719)(0.845000,0.103329)(0.850000,0.104377)(0.855000,0.104909)(0.860000,0.104963)(0.865000,0.104562)(0.870000,0.103719)(0.875000,0.102440)(0.880000,0.100718)(0.885000,0.098534)(0.890000,0.095860)(0.895000,0.092646)(0.900000,0.088816)(0.905000,0.084257)(0.910000,0.078816)(0.915000,0.072396)(0.920000,0.066418)(0.925000,0.067789)(0.930000,0.072772)(0.935000,0.077126)(0.940000,0.080600)(0.945000,0.083341)(0.950000,0.085473)(0.955000,0.087091)(0.960000,0.088262)(0.965000,0.089037)(0.970000,0.089449)(0.975000,0.089522)(0.980000,0.089275)(0.985000,0.088719)(0.990000,0.087856)(0.995000,0.086686)(1.000000,0.085203)
      };
      \legend{ { $F_N^{\rm oracle}$},{$F_N^{\rm SCM}=F_N^{\rm nSCM}$},{$F_N$} } 
    \end{axis}
  \end{tikzpicture}
  \caption{Density of the approximate (deterministic) spectral distributions for the outlier-free oracle ($F_N^{\rm oracle}$), the SCM or normalized SCM ($F_N^{\rm SCM}=F_N^{\rm nSCM}$), and $\Chat$ ($F_N$), with $u=u_\hub$ with parameter $t=.1$, $[\C]_{ij}=.9^{|i-j|}$, $\D=\I$, $N=100$, $c=.2$, and $\varepsilon=.05$. }
  \label{fig:limit_laws}
\end{figure}

As it appears from Figure~\ref{fig:limit_laws} that the tail of the various estimator distributions may be strongly affected by a weak outlier control, it is interesting to investigate the impact on their moments. For this, we introduce the following application to Corollary~\ref{cor:moments} for the random outlier setting. 
\begin{corollary}[Moments in Random Case]
	Under the setting of Corollary~\ref{cor2}, letting $M_{N,p}^\rnd=\int t^p dF_N^\rnd(t)$, we have
\begin{align}
M_{N,p}^\rnd = \frac{(-1)^p}{p!} \frac1N \tr \T_p^\rnd
\notag
\end{align}
where $\T_p^\rnd$ is obtained recursively as
\begin{align*}
\T_{p+1}^\rnd &=  \sum_{i=0}^p \sum_{j=0}^i \binom{p}{i} \binom{i}{j} \T_{p-i}^\rnd \Q_{i-j+1}^\rnd \T_j^\rnd \\
\Q_{p+1}^\rnd &= (p+1) \left[ (1-\varepsilon) f_{1,p} \R_1 + \varepsilon f_{2,p} \R_2  \right] \\
f_{k, p+1} &= \sum_{i=0}^p \sum_{j=0}^i \binom{p}{i} \binom{i}{j} (p-i+1) f_{k,j} f_{k,i-j} \deltaf_{k,p-i} \\
\deltaf_{k,p+1} &= \frac1n \tr \R_k \T_{p+1}^\rnd ,
\end{align*}
with initial values $\T_0^\rnd = \I$, $f_{k,0} = -1$, $\deltaf_{k,0} = \frac1n \tr \R_k$, and with $\R_1 = v(\gammaEqRnd) \C$, $\R_2 = v(\alphaEqRnd) \D$. 
In particular,
\begin{align*}
&M_{N,1}^\rnd = \frac1N \tr \left[ \varepsilon v(\alphaEqRnd) \D + (1-\varepsilon) v(\gammaEqRnd) \C \right]\\
&M_{N,2}^\rnd = \frac1N \tr \Big[ \left( \varepsilon v(\alphaEqRnd) \D + (1-\varepsilon) v(\gammaEqRnd) \C \right)^2 \nonumber \\
&+\varepsilon v^2(\alphaEqRnd) \D \left[\frac1n \tr \D\right] + (1-\varepsilon) v^2(\gammaEqRnd) \C \left[\frac1n \tr \C\right] \Big].
\end{align*}
	\label{cor:moments_rnd}
\end{corollary}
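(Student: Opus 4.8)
The plan is to obtain the $M_{N,p}^\rnd$ as the moment-generating coefficients of the Stieltjes transform $m_N^\rnd$ of Corollary~\ref{cor1}, which is legitimate because $\limsup_N\|\Chat\|<\infty$ almost surely forces $F_N^\rnd$ to be compactly supported and hence moment-determinate. Writing $\R_1=v(\gammaEqRnd)\C$, $\R_2=v(\alphaEqRnd)\D$, I would introduce the population matrix $\mathbf{P}(z)=\frac{(1-\varepsilon)\R_1}{1+e_{N,1}(z)}+\frac{\varepsilon\R_2}{1+e_{N,2}(z)}$ and resolvent $\mathbf{G}(z)=(\mathbf{P}(z)-z\I)^{-1}$, so that Corollary~\ref{cor1} reads $m_N^\rnd(z)=\frac1N\tr\mathbf{G}(z)$ with the self-consistency $e_{N,k}(z)=\frac1n\tr\R_k\mathbf{G}(z)$. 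Inverting $z\mapsto 1/z$ in the moment-generating identity used for Corollary~\ref{cor2} gives $m_N^\rnd(z)=-\sum_{p\geq0}M_{N,p}^\rnd z^{-(p+1)}$ for $|z|$ large, so the whole task is to read off the matrix Laurent coefficients of $\mathbf{G}(z)$ at infinity; by construction these are the $\T_p^\rnd$ through $\mathbf{G}(z)=\sum_{p\geq0}\frac{(-1)^{p+1}}{p!}\T_p^\rnd z^{-(p+1)}$, which upon applying $\frac1N\tr$ yields $M_{N,p}^\rnd=\frac{(-1)^p}{p!}\frac1N\tr\T_p^\rnd$.

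The cleanest way to generate these coefficients is to pass to the variable $s=1/z$ and write $\mathbf{G}(1/s)=s\,\mathbf{K}(s)$ with $\mathbf{K}(s)=(\mathbf{L}(s)-\I)^{-1}$ and $\mathbf{L}(s)=s\,\mathbf{P}(1/s)$ the rescaled population matrix. The advantage of this form is that $\mathbf{K}'=-\mathbf{K}\,\mathbf{L}'\,\mathbf{K}$ is a \emph{pure} triple product, with no spurious $\mathbf{K}^2$ term; differentiating $p$ times and applying the Leibniz rule to the three factors produces exactly the trinomial weights $\binom{p}{i}\binom{i}{j}$ of the stated recursion $\T_{p+1}^\rnd=\sum_{i=0}^p\sum_{j=0}^i\binom{p}{i}\binom{i}{j}\T_{p-i}^\rnd\Q_{i-j+1}^\rnd\T_j^\rnd$, the $\Q_{m}^\rnd$ being, up to factorial normalization, the successive derivatives of $\mathbf{L}$. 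This is precisely the structural simplification that distinguishes the random case from Corollary~\ref{cor:moments}: because \emph{all} of the $z$-dependence now sits inside the two self-consistent terms of $\mathbf{P}$ and there is no constant additive matrix $\A$, the separate term $-\sum_i\T_{p-i}\A\T_i$ present in Corollary~\ref{cor:moments} disappears and the former role of $\A$ is taken over by the second ($k=2$) summand of $\mathbf{L}$.

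The scalar side is handled in parallel. Each factor $w_k=(1+e_{N,k})^{-1}$ satisfies $w_k'=-e_{N,k}'\,w_k^2$, again a triple product, so Leibniz differentiation gives the stated recursion $f_{k,p+1}=\sum_{i=0}^p\sum_{j=0}^i\binom{p}{i}\binom{i}{j}(p-i+1)f_{k,j}f_{k,i-j}\deltaf_{k,p-i}$ for the coefficients $f_{k,p}$ of $w_k$, whence $\Q_{p+1}^\rnd=(p+1)[(1-\varepsilon)f_{1,p}\R_1+\varepsilon f_{2,p}\R_2]$, the factor $p+1$ being produced by the extra power of $s$ in $\mathbf{L}=s\,\mathbf{P}(1/s)$. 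The system is then closed by the self-consistency relation, which in coefficient form is $\deltaf_{k,p}=\frac1n\tr\R_k\T_p^\rnd$ with $\deltaf_{k,0}=\frac1n\tr\R_k$. One checks the recursion is well-ordered: $\T_{p+1}^\rnd$ calls $\Q_m^\rnd$ and $\T_m^\rnd$ of index at most $p+1$ and $p$, $\Q_{p+1}^\rnd$ calls $f_{k,p}$, and $f_{k,p}$ and the $\deltaf_{k,\cdot}$ only invoke $\T^\rnd$ of strictly smaller index, so nothing is circular. Reading off $p=1,2$ recovers the displayed $M_{N,1}^\rnd$ and $M_{N,2}^\rnd$ (for instance $\T_1^\rnd=\Q_1^\rnd=-[(1-\varepsilon)\R_1+\varepsilon\R_2]$, using $f_{k,0}=-1$ and $\T_0^\rnd=\I$).

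The routine part is the formal-power-series algebra; the genuinely delicate point is the combinatorial and sign bookkeeping --- checking that the three coupled expansions (of $\mathbf{K}$ and of the two scalars $(1+e_{N,k})^{-1}$) recombine into the single recursion with exactly the displayed binomial weights, the $(p+1)$ prefactor, and the correct initializations $f_{k,0}=-1$, $\deltaf_{k,0}=\frac1n\tr\R_k$. Since the computation is structurally identical to the single-scalar one behind Corollary~\ref{cor:moments} --- with $\A$ deleted and a second, symmetric $(f,\deltaf)$ machinery indexed by $k=2$ added --- this reduces to tracking indices and normalizations rather than to any new analytic input, and I would in practice simply transcribe the proof of Corollary~\ref{cor:moments} with these two modifications.
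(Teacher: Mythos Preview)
Your proposal is correct and follows essentially the same route as the paper. The paper does not give a separate proof of this corollary; it is meant to be read off from the general moment recursion of Theorem~\ref{moments:thm2} (itself proved, as in \cite{Hoydis2011}, by Laurent-expanding the deterministic-equivalent Stieltjes transform at infinity) in the special case $\A=\mathbf{0}$ and with only two distinct population classes $\R_1,\R_2$ of respective weights $1-\varepsilon$ and $\varepsilon$ --- precisely the two structural modifications you identify.
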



As expected, $\Chat$ induces a bias in the mean. For fair comparison with the normalized SCM, which estimates $\C$ up to a scale constant, let us define the normalized moments
\begin{align*}
	\bar{M}_{N,p} \triangleq \frac{M_{N,p}}{M_{N,1}}
\end{align*}
and define similarly $\bar{M}_{N,p}^\rnd$ as well as $\bar{M}_{N,p}^{\rm SCM}$ for the SCM, $\bar{M}_{N,p}^{\rm nSCM}$ for the normalized SCM, and $\bar{M}_{N,p}^{\rm oracle}$ for the oracle estimator. Under the same setting as in Figure~\ref{fig:limit_laws}, we provide in the table of Figure~\ref{table:moments} the successive normalized moments and relative error compared to $\bar{M}_{N,p}^{\rm oracle}$. In this case, $\bar{M}_{N,p}^{\rm SCM}=\bar{M}_{N,p}^{\rm nSCM}$. For the scenario at end, given the large support of $F^\rnd_N$, even low order moments tend to take large values so that the asymptotic moment approximation only theoretically holds for $p$ rather small when $N=100$ and we thus only provide these first order moments. The results demonstrate an important advantage brought by $\Chat$ versus the SCM in that the first few order moments are better preserved.

\begin{figure}
	\begin{tabular}{l|ccc}
 & $p=2$ & $p=3$ & $p=4$ \\
\noalign{\smallskip}
\hline
\noalign{\smallskip}
$\bar{M}_{N,p}^{\rm oracle}$ & $9.28$  & $129$ & $1993$ \\  
$\bar{M}_{N,p}^{\rnd}$ (error) & $9.18$ ($1.1\%$) & $126$ ($1.8\%$) & $1945$ ($2.4\%$) \\  
$\bar{M}_{N,p}^{\rm SCM}$ (error) & $8.53$ ($8.2\%$) & $112$ ($13\%$) & $1660$ ($17\%$) \\  
\noalign{\smallskip}
\end{tabular}
\caption{Normalized moments $\bar{M}_{N,p}^\rnd$, $\bar{M}_{N,p}^{\rm SCM}$, versus $\bar{M}_{N,p}^{\rm oracle}$, and relative error $|\, \cdot \, - \bar{M}_{N,p}^{\rm oracle}|/\bar{M}_{N,p}^{\rm oracle}$. Random outliers, $N=100$, $c=.2$, $[\C]_{ij}=.9^{|i-j|}$, $\D = \I$, $\varepsilon=.05$, $u=u_\hub$, $t=.1$. }
\label{table:moments}
\end{figure}

\section{Discussion and Concluding Remarks}
\label{sec:conclusion}

Our study of the robust estimator $\Chat$ in the large random matrix regime has already led to several interesting conclusions, which we shall more thoroughly address in this section. 

Most investigations of robust estimators of scatter focus on the more tractable case where the samples (i.e., the columns of $\Y$) are independent with identical elliptical distribution. The recent results of \cite{Couillet2013,COU14} have revealed that, as $u(x)$ gets close to the Tyler $1/x$ function, in the large random matrix regime, $\Chat$ tends to behave similar to the normalized SCM defined in \eqref{eq:normalized_SCM}. This conclusion was quite pessimistic as it suggested no real improvement of $\Chat$ over simplistic alternative robust methods. In the concluding remarks of \cite[Section~4]{COU14}, the authors anticipated a change of behavior of $\Chat$ versus the normalized SCM for deterministic outlier data. This was revealed here both in Section~\ref{sec:few_outliers} and in Section~\ref{sec:random} where it is made clear that, unlike the normalized SCM, the robust estimators of scatter smartly detect the outliers, essentially by evaluating and comparing the quadratic forms $\y^\dagger \C^{-1}\y$ for each column vector $\y$ of $\Y$. Larger $\y^\dagger \C^{-1}\y$ imply more attenuation of $\y$ within the observed samples. However, an incidental consequence of this behavior of $\Chat$ is that small values of $\y^\dagger \C^{-1}\y$ enhance the effect of $\y$ even though it might not comply with the legitimate sample distribution, thus increasing the probability of inducing false alarms. This has led us to conclude that the function $u$ should be adequately tuned to avoid such a phenomenon. Another consequence is that matrices $\Chat$ with legitimate data of covariance $\C$ close to the identity will have very poor outlier rejection properties.

When the outliers are few, the empirical spectral measure $F^{\Chat}$ of $\Chat$ is asymptotically the same as that of the SCM, normalized SCM, and oracle estimators. As such, if one's interest is on functionals of the eigenvalues of $\C$, such as moments, and only few outliers are expected, sophisticated robust estimators come to no avail. 
This being said, the outliers may naturally engender extra isolated eigenvalues (only finitely many) in the spectrum of $\frac1n\Y\Y^\dagger$ which $\Chat$ might suitably remove while the normalized SCM may not (recall Figure~\ref{fig:oneoutlier_eigenvalues}). For subspace detection and estimation applications, where the information often lies in the eigenvectors of isolated eigenvalues, discarding such outlying information is critical and thus robust estimators may bring important performance gains. For instance, applications in finance and biostatistics (where data are often assumed to contain outliers) heavily rely on isolated eigenvalue-eigenvector pairs, see e.g., \cite{POT00,QUA13}. The experimenter must however keep in mind that, according to our analysis, $\Chat$ is most effective at automatically suppressing \emph{isolated} outliers (the less of these relative to the legitimate samples the better) and loses discriminatory power as the outliers approach one another.

The observation made in Section~\ref{sec:random} that the distribution (in particular through its first order moments) $F_N^\rnd$ is much closer to the oracle estimator than would the (normalized or not) SCM be leads to some interesting applications when it comes to designing improved estimators for $\C$ that both account for the fact that $n$ is not large compared to $N$ and for the fact that the observed data are prone to outliers. Such investigations were successively made in \cite{CHE11} for the finite $N,n$ regime and later in \cite{COU14} for the large $N,n$ regime where hybrid Ledoit--Wolf \cite{LED04} and Tyler \cite{Tyler1987} estimators were proposed that improve the estimation of $\C$ by providing an extra degree of freedom (a regularization parameter) which is selected so to minimize the expected Frobenius norm between $\C$ and the estimator under study. Since the Frobenius norm is nothing but a functional of second order moments, the observation made in the table of Figure~\ref{table:moments} strongly suggests that the Ledoit--Wolf estimator alone (being based on the SCM) would be quite sensitive to deterministic outliers while the estimators studied in \cite{CHE11,COU14}, which are essentially of a similar class as $\Chat$, would be much more resilient to such outliers.

When the number of outliers is much larger, even in the random outlier scenario studied in Section~\ref{sec:random}, very little can be said. However, we noticed an interesting symmetry in the equations defining the weights $\gamma_n^\rnd$ and $\alpha_n^\rnd$ of Corollary~\ref{cor1}, which reveals that the asymptotic proportion $\varepsilon$ of outliers versus $1-\varepsilon$ of legitimate data could tip for $\varepsilon>.5$ towards letting the outliers be considered as the truly legitimate data.

\bigskip

In summary, the present study provides a first step towards a better understanding of the behavior of (classical) robust estimators of scatter against arbitrary outliers. Our findings underline several key aspects of such estimators of profound practical relevance, such as the importance of the population covariance matrix $\C$ of the legitimate data in the rejection power of the estimator, as well as the risks inherent to using weight functions $u$ of the Tyler type. Nonetheless, this study remains at the theoretical level of the estimator itself and does not consider the implications when used as a plug-in estimator in detection or estimation methods. Whether these methods are based on local information (isolated eigenvalue, specific eigenvectors, etc.) or global information (functional of the eigenvalues, projections on large subspaces, etc.) about $\C$ will entail significant differences in the way $\Chat$, through the weight function $u$, must be tailored. Such considerations are left to future investigations.


\appendices




\section{Proof of Theorem \ref{thm1}}
\label{app:det}

The main technical difficulty of the article lies in the proof of Theorem~\ref{thm1} which extends the methods developed in \cite{Couillet2013} to multiple sample types. The present section is dedicated to this proof. Some auxiliary random matrix results will be then listed in Appendix~\ref{app:RandomMatrixResults}, while Appendix~\ref{app:moments} will deal with the (rather immediate) proof of Corollary~\ref{cor:moments}.

\bigskip

The proof of Theorem~\ref{thm1} is divided in two parts. First, we show that the system of fixed-point equations \eqref{gammaThm} admits a unique vector solution and that such solution is bounded as $n\to\infty$. This then defines unequivocally the matrix $\Shat$. We then show in a second part that $\| \Chat - \Shat \| \toas 0$.

\subsection{Existence, uniqueness, boundedness of the solution to \eqref{gammaThm}}
\label{app:boundedness}
To prove existence and uniqueness, we use the framework of standard interference functions \cite{Yates1995}.

\begin{definition}
A function $\h = (h_0,\ldots,h_s): \mathbb{R}_+^{1+s} \to \mathbb{R}_+^{1+s}$ is a standard interference function if it satisfies the conditions:
\begin{enumerate}
	\item {\bf Positivity}: if $q_0,\ldots,q_s \geq 0$, then $h_i(q_0,\ldots,q_s)>0$ for all $i$.
	\item {\bf Monotonicity}: if $q_0 \geq q_0^\prime,\ldots,q_s \geq q_s^\prime$ then, for all $i$, $h_i(q_0,\ldots,q_s) \geq h_i(q_0^\prime,\ldots,q_s^\prime)$.
	\item {\bf Scalability}: for all $\delta > 1$ and all $i$, $\delta h_i(q_0,\ldots,q_s) > h_i(\delta q_0,\ldots,\delta q_s)$.
\end{enumerate}
\label{def:intFunction}
\end{definition}
By \cite[Thm.~2]{Yates1995}, if $\h$ is a standard interference function for which there exists $(q_0,\ldots,q_s)$ such that $q_i \geq h_i(q_0,\ldots,q_s)$ for all $i$, then the system of equations $q_i = h_i(q_0,\ldots,q_s)$, $i=0,\ldots,s$, has a unique solution. 

Define $\h \triangleq (h_0,\ldots,h_{\eps n}): \mathbb{R}_+^{1+\eps n} \to \mathbb{R}_+^{1+\eps n}$ with
\begin{align*}
& h_0(q_0,\ldots,q_{\eps n}) = \nonumber \\
& \frac{1}{N} \tr \C  \left(  \frac{(1-\varepsilon) v(q_0)}{1+c v(q_0) q_0} \C + \frac{1}{n} \sum_{j=1}^{\varepsilon_n n} v\left( q_j \right)  \a_j \a_j^\dagger  \right)^{ -1} \\
& h_i(q_0,\ldots,q_{\eps n}) = \nonumber \\
& \frac{1}{N}  \a_i^\dagger \left( \frac{(1-\varepsilon) v(q_0)}{1+c v(q_0) q_0} \C + \frac{1}{n} \sum_{j\neq i}
 v\left( q_j \right)  \a_j  \a_j^\dagger \right)^{ -1}  \a_i
\end{align*}
for $i=1,\ldots,\eps n$. 
Let us prove that $\h$ meets the conditions of Definition~\ref{def:intFunction} and that, for $i=0,\ldots,\eps n$, $h_i(q_0,\ldots,q_{\eps n}) \leq q_i$ for some $(q_0,\ldots,q_{\eps n})$, which will then prove existence and uniqueness.

From Assumption \ref{as:C} and the fact that $v$ is bounded, we clearly have $h_i > 0$ for all $i$. To show monotonicity, let us first define 
\begin{align*}
\B(q_0,\ldots,q_{\eps n}) = \frac{(1-\varepsilon) v(q_0)}{1+c v(q_0) q_0} \C + \frac{1}{n} \sum_{j=1}^{\varepsilon_n n} v\left( q_j \right)  \a_j \a_j^\dagger
\end{align*}
and take $q_0,\ldots,q_{\eps n}$ and $q_0^\prime,\ldots,q_{\eps n}^\prime$ such that $q_i \geq q_i^\prime$ for all $i$. Then, since $v$ is non-increasing and $\psi(x)=x v(x)$ is increasing,
\begin{align*}
\B(q_0,\ldots,q_{\eps n}) \preceq \B(q_0^\prime,\ldots,q_{\eps n}^\prime) .
\end{align*}
From \cite[Cor.~7.7.4]{Horn1985}, this implies
\begin{align*}
\left( \B(q_0,\ldots,q_{\eps n}) \right)^{-1} \succeq \left( \B(q_0^\prime,\ldots,q_{\eps n}^\prime) \right)^{-1}
\end{align*}
from which $h_0 (q_0,\ldots,q_{\eps n}) \geq h_0 (q_0^\prime,\ldots,q_{\eps n}^\prime) $. By the same arguments, $h_i (q_0,\ldots,q_{\eps n}) \geq h_i (q_0^\prime,\ldots,q_{\eps n}^\prime)$ for $i=1,\ldots,\eps n$, thus proving the monotonicity of $\h$. Finally, to show scalability, let us rewrite $h_0$ as
\begin{align*}
& h_0(q_0,\ldots,q_{\eps n}) = \nonumber \\
&  \frac{1}{N} \tr \C  \left( (1-\varepsilon)  \frac{\Theta(q_0)}{q_0} \C + \frac{1}{n} \sum_{j=1}^{\varepsilon_n n} \frac{\psi\left( q_j \right)}{q_j}  \a_j \a_j^\dagger  \right)^{ -1} 
\end{align*}
where $\Theta(x)=\frac{\psi(x)}{1+c \psi(x)}$. Since $\psi(x)$ is increasing, so is $\Theta(x)$ and, for any $\delta > 1$,
\begin{align*}
& h_0(\delta q_0,\ldots,\delta q_{\eps n}) \nonumber \\
&   = \frac{\delta}{N} \tr \C  \left(  \frac{(1-\varepsilon) \Theta(\delta q_0)}{q_0} \C + \frac{1}{n} \sum_{j=1}^{\varepsilon_n n} \frac{\psi\left( \delta q_j \right)}{q_j}  \a_j \a_j^\dagger  \right)^{ -1} \\
 &  < \delta h_0(q_0,\ldots,q_{\eps n}) .
\end{align*}
We show similarly $h_i(\delta q_0,\ldots,\delta q_{\eps n}) < \delta h_i(q_0,\ldots,q_{\eps n})$ for $i=1,\ldots,\eps n$, thus proving the scalability of $\h$.

Thus, $\h$ is a standard interference function and it remains to show that $h_i(q_0,\ldots,q_{\eps n}) \leq q_i$ for some $(q_0,\ldots,q_{\eps n})$ and for all $i$. For $i=0$,
\begin{align*}
h_0(q_0,\ldots,q_{\eps n}) = \frac1N \tr  \C \left( \B(q_0,\ldots,q_{\eps n}) \right)^{-1}
\end{align*}
where
\begin{align*}
 \B(q_0,\ldots,q_{\eps n}) \succeq  \frac{(1-\varepsilon) v(q_0)}{1+c v(q_0) q_0} \C
\end{align*}  
and thus, by definition of $\psi$,
\begin{align}
h_0(q_0,\ldots,q_{\eps n}) \leq \frac{1+c \psi(q_0)}{(1-\varepsilon)\psi(q_0)} q_0 .
\label{bound_gamma}
\end{align}
As a consequence, we need to find some $q_0$ for which $\frac{1+c \psi(q_0)}{(1-\varepsilon)\psi(q_0)} \leq 1$ or, equivalently, $\psi(q_0) \geq \frac{1}{1-\varepsilon -c}$. Such a choice of $q_0$ is always possible since $\psi$ is increasing on $[0,\infty)$ with image $[0,\psi_{\infty})$ where $\frac{1}{1-\varepsilon-c} < \psi_{\infty}$ (this unfolds from $\phi_{\infty} > \frac{1}{1-\varepsilon}$). Therefore, for any $q_0$ such that $\frac{1}{1-\varepsilon-c} \leq \psi(q_0) < \psi_{\infty}$, we have $h_0(q_0,\ldots,q_{\eps n}) \leq q_0$. Take for instance $q_0 = \psi^{-1}(\frac{1}{1-\varepsilon-c})$ and consider now the functions $h_i$, $i=1,\ldots,\eps n$ for which, using \cite[Lemma 10]{Wagner2012} and similar arguments as above,
\begin{align}
h_i(q_0,\ldots,q_{\eps n}) & \leq  q_0   \frac{1+c \psi(q_0)}{(1-\varepsilon)\psi(q_0)} \frac1N \a_i^\dagger \C^{-1} \a_i    \notag \\
&= q_0 \frac1N \a_i^\dagger \C^{-1} \a_i \triangleq w_i .
\label{bound_alphas}
\end{align}
Therefore, taking $q_i = w_i$ for $i=1,\ldots,\eps n$, we also have $h_i(q_0,\ldots,q_{\eps n}) \leq q_i$. Altogether, we have shown that the function $\h$ satisfies the conditions of \cite[Thm.~2]{Yates1995} implying that there exists a unique solution to \eqref{gammaThm}. As such, $\Shat$ as introduced in the statement of Theorem~\ref{thm1} is well-defined.

\bigskip

We now turn our focus to the boundedness of the solution to \eqref{gammaThm}. From (\ref{bound_gamma}) and (\ref{bound_alphas}), along with Assumption \ref{as:C}, we immediately have that $(\gammaEq,\alpha_{1,n},\ldots,\alpha_{\varepsilon_n n,n})$ is uniformly bounded in $n$, i.e., $\limsup_n \gammaEq < \infty$ and $\limsup_n \max_{1\leq i \leq \eps n} \alphaEqi < \infty$. Furthermore, $\gammaEq$ can be shown to be uniformly away from zero as follows. By monotonicity of the $\h$ function, $h_0(q_0,\ldots,q_{\eps n}) \geq h_0(0,\ldots,0)$, i.e.,
\begin{align*}
h_0(q_0,\ldots,q_{\eps n}) & \geq \frac{1}{v(0)} \frac1N \tr \H_N^{-1} 
 \geq \frac{1}{v(0)} \frac{1}{\| \H_N \|} , \notag
\end{align*}
where the matrix $\H_N$ is defined as
\begin{align*}
\H_N \triangleq (1-\varepsilon) \I + \frac{1}{n} \sum_{j=1}^{\varepsilon_n n}  \Chalfm \a_j \a_j^\dagger \Chalfm.
\end{align*}
By Assumption \ref{as:C} we have $\limsup_n \| \H_N \| < \infty$ and, consequently, $\liminf_n \gammaEq > 0$.



\subsection{Convergence of $\Chat-\Shat$}

Having proved that $\Shat$ is well defined, we now turn to the core of the proof of Theorem~\ref{thm1}. The outline of the proof follows tightly that of \cite[Thm.~2]{Couillet2013} but for a model that is (i) simpler in its assuming the legitimate data to be essentially Gaussian instead of elliptical, but (ii) made more complex due to the deterministic addition of the vectors $\a_1,\ldots,\a_{\eps n}$. Our way to deal with (ii) is by controlling in parallel the quantities asymptotically approximated by $\gammaEq$ and those asymptotically approximated by $\alphaEqi$. Since some parts of the proof mirror closely those in \cite[Thm.~2]{Couillet2013}, we shall mainly focus on the significantly differing aspects.

First note that we can assume $\C = \I$ by studying $\Chalfm \Chat \Chalfm$ instead of $\Chat$, in which case we have $\Chalfm \a_i$ in place of the original $\a_i$. This can be seen from (\ref{Maronna}), the implicit equation solved by $\Chat$. Hence, from now on we assume $\C = \I$ without loss of generality. Using the definition $v(x) \triangleq u \left( g_n^{-1}(x) \right)$, with $g_n(x) = x/(1-\c \phi (x))$, and following the same steps as
in \cite{Couillet2013}, let us write
\begin{align*}
\Chat &= \frac{1}{n} \sum_{i=1}^{(1-\eps)n} v \left( d_i  \right) \x_i \x_i^\dagger + \frac{1}{n} \sum_{i=1}^{\eps n} v \left( b_i  \right) \a_i \a_i^\dagger
\end{align*}
with $d_i \triangleq \frac{1}{N} \x_i^\dagger \Chatrx^{-1} \x_i$ and $b_i \triangleq \frac{1}{N} \a_i^\dagger \Chatra^{-1} \a_i$, where $\Chatrx \triangleq \Chat - v \left( d_i  \right) \x_i \x_i^\dagger$ and $\Chatra \triangleq \Chat - v \left( b_i \right) \a_i \a_i^\dagger$.
Further define
\begin{align*}
e_i &\triangleq \frac{v(d_i)}{v(\gammaEq)} , \qquad f_i  \triangleq \frac{v(b_i)}{v(\alphaEqi)} ,
\end{align*}
with $\gammaEq$ and $\alphaEqi$ as in the statement of Theorem~\ref{thm1} but for $\C=\I$, i.e., $\gammaEq$ and $\alphaEqi$ are the positive solutions to
\begin{align*}
 \gammaEq & = \frac{1}{N} \tr \left(  \frac{(1-\varepsilon) v(\gammaEq)}{1+c v(\gammaEq) \gammaEq} \I + \frac{1}{n} \sum_{j=1}^{\eps n} v\left( \alphaEqj \right)  \a_j  \a_j^\dagger  \right)^{\hspace{-1mm}  -1} \\ 
 \alphaEqi & = \frac{1}{N}  \a_i^\dagger \left( \frac{(1-\varepsilon) v(\gammaEq)}{1+c v(\gammaEq) \gammaEq} \I + \frac{1}{n}  \sum_{j\neq i}  v\left( \alphaEqj \right)  \a_j  \a_j^\dagger \right)^{\hspace{-1mm} -1}  \hspace{-2mm} \a_i .
\end{align*}

The core of the proof is to show that
\begin{align} \label{max_ei}
 \max_{1\leq i \leq (1-\eps)n} | e_i -1 | & \toas 0 \\ \label{max_fi}
 \max_{1\leq i \leq \eps n} | f_i -1 | & \toas 0  .
\end{align}
Let us first relabel $e_i$ and $f_i$ such that $e_1 \leq \ldots \leq e_{(1-\eps)n}$ and $f_1 \leq \ldots \leq f_{\eps n}$ and denote $\deltap_n = \max (e_{(1-\eps) n}, f_{\eps n})$. For any $i = 1,\ldots, (1-\eps)n$, we have
\begin{align*}
e_i  & = \frac{v \left(   \frac{1}{N} \x_i^\dagger \left( \frac{1}{n} \hspace{-0.5mm} \sum\limits_{j\neq i}  v( d_j ) \x_j \x_j^\dagger
 + \frac{1}{n} \hspace{-0.5mm} \sum\limits_{j=1}^{\eps n} v( b_j ) \a_j \a_j^\dagger  \right)^{\hspace{-1mm} -1} \hspace{-2mm} \x_i  \right)}{v(\gammaEq)} \\
& \hspace{-1mm} \leq \frac{v  \left(  \hspace{-0.5mm}  \frac{1}{\deltap_n N} \x_i^\dagger   \left(  \frac{1}{n} \hspace{-0.5mm}  \sum\limits_{j\neq i}  v( \gammaEq ) \x_j \x_j^\dagger
 + \frac{1}{n} \hspace{-0.5mm} \sum\limits_{j=1}^{\eps n} v ( \alphaEqj ) \a_j \a_j^\dagger  \right)^{ \hspace{-1mm} -1}  \hspace{-2mm} \x_i  \hspace{-0.5mm}   \right)}{v(\gammaEq)}
\end{align*}
where we used $v(d_j)=v(\gamma_n)e_j$, $v(b_j)=v(\alpha_{j,n})f_j$ and the inequality arises from $e_j,f_j\leq \delta_n$, from $v$ being non-increasing, and from \cite[Cor.~7.7.4]{Horn1985}. For readability, let
\begin{align*}
\Ani \triangleq \frac{1}{n}   \sum\limits_{j \neq i}  v( \gammaEq ) \x_j \x_j^\dagger + \frac{1}{n} \sum\limits_{j=1}^{\eps n} v ( \alphaEqj ) \a_j \a_j^\dagger .
\end{align*}
From the random matrix result, Lemma~\ref{lemma1} of Appendix~\ref{app:RandomMatrixResults},
\begin{align*}
\max_{1 \leq i \leq (1-\eps) n}  \left|  \frac{1}{N} \x_i^\dagger \AniI \x_i    - \gammaEq \right| \toas 0 .
\end{align*}
Thus, for $\zeta >0$, with probability one, we have for all large $n$
\begin{align} \label{ei_bound}
e_{(1-\eps)n} \leq \frac{v\left( \frac{1}{\deltap_n} (\gammaEq - \zeta)\right)}{v(\gammaEq)} .
\end{align} 
We can proceed similarly to bound $f_i$ from above as
\begin{align*}
f_i  & \leq \frac{v\left(  \frac{1}{\deltap_n N} \a_i^\dagger \BniI \a_i   \right)}{v(\alphaEqi)} 
\end{align*}
for any $i=1,\ldots,\eps n$, with
\begin{align*}
\Bni \triangleq \frac{1}{n} \sum_{j=1}^{(1-\eps)n} v\left( \gammaEq  \right) \x_j \x_j^\dagger + \frac{1}{n} \sum_{j\neq i} v\left( \alphaEqj  \right) \a_j \a_j^\dagger
\end{align*}
and we now use Lemma~\ref{lemma2} in Appendix~\ref{app:RandomMatrixResults} which states
\begin{align*}
\max_{1 \leq i \leq \eps n}  \left|  \frac{1}{N} \a_i^\dagger \BniI \a_i      - \alphaEqi \right| \toas 0 .
\end{align*}

Therefore, for the same $\zeta >0$ and for all large $n$ a.s.,
\begin{align} \label{fi_bound}
f_{\eps n} \leq \frac{v\left( \frac{1}{\deltap_n} (\alphaEqi - \zeta)\right)}{v(\alphaEqi)} .
\end{align}

We now consider separately the subsequence of $n$ over which $e_{(1-\eps)n} \geq f_{\eps n}$ and that over which $e_{(1-\eps)n} < f_{\eps n}$ (these subsequences may be empty or finite).

\subsubsection*{Subsequence $e_{(1-\eps)n} \geq f_{\eps n}$}
On this subsequence, \eqref{ei_bound} becomes
\begin{align*}
e_{(1-\eps) n} \leq \frac{v\left( \frac{1}{e_{(1-\eps) n}} (\gammaEq - \zeta)\right)}{v(\gammaEq)}
\end{align*} 
or alternatively, since $e_{(1-\eps) n}$ is positive, 
\begin{align*}
1 \leq \frac{\psi \left( \frac{\gamma_n}{e_{(1-\epsilon)n}} \left( 1 - \frac{\zeta}{\gamma_n} \right)  \right)}{\psi(\gamma_n) \left( 1 - \frac{\zeta}{\gamma_n} \right)}.
\end{align*}
We want to prove that, for any $\ell > 0$, $e_{(1-\epsilon)n} \leq 1 + \ell$ for all large $n$ a.s. Let us assume the opposite, i.e., $e_{(1-\epsilon)n} > 1 + \ell$ infinitely often, and let us restrict ourselves to a (further) subsequence where this always holds. Then,
\begin{align}
1 \leq \frac{\psi \left( \frac{\gammaEq}{1 + \ell} \left( 1 - \frac{\zeta}{\gammaEq} \right)  \right)}{\psi(\gammaEq) \left( 1 - \frac{\zeta}{\gammaEq} \right)} \leq \frac{\psi \left( \frac{\gammaEq}{1 + \ell}  \right)}{\psi(\gammaEq) \left( 1 - \frac{\zeta}{\gammaEq} \right)} .
\notag
\end{align}
From the uniform boundedness of $\gammaEq$ away from zero and infinity (see Appendix~\ref{app:boundedness}), considering yet a further subsequence over which $\gammaEq \to \gamma_0 > 0$, we obtain in the limit
\begin{align*}
\psi (\gamma_0) \left( 1 - \frac{\zeta}{\gamma_0} \right) \leq  \psi \left( \frac{\gamma_0}{1 + \ell}  \right).
\end{align*}
This being valid for each $\zeta > 0$, a contradiction is raised in the limit $\zeta \to 0$. Therefore, either the subsequence over which $e_{(1-\eps)n} \geq f_{\eps n}$ is finite or $e_{(1-\epsilon)n} \leq 1 + \ell$ for all large $n$ a.s. Assuming the former, then $e_{(1-\eps)n} < f_{\eps n}$ for all large $n$, which is considered next.


\subsubsection*{Subsequence $e_{(1-\eps)n} < f_{\eps n}$}
On this subsequence, \eqref{fi_bound} becomes
\begin{align} 
f_{\eps n} \leq \frac{v\left( \frac{1}{f_{\eps n}} (\alpha_{\eps n, n} - \zeta)\right)}{v(\alpha_{\eps n, n})} 
\label{palawan}
\end{align}
for all large $n$ a.s. Again, we wish to prove that with, say, the same $\ell > 0$ as above, $f_{\eps n} \leq 1 + \ell$ for all large $n$ a.s. Consider first the case $\liminf_n \alpha_{\eps n,n} = 0$ and restrict ourselves to those converging subsequences over which $\alpha_{\eps n, n} \to 0$. In the limit, $v(\alpha_{\eps n,n}) \to v(0)$ so that, for any $\theta>0$ and for $n$ large enough, $v(\alpha_{\eps n,n}) > v(0)-\theta$. This, along with $v(1/f_{\eps n} (\alpha_{\eps n, n} - \zeta) ) \leq v(0)$ gives $f_n \leq v(0)/(v(0)-\theta)$ for all large $n$ implying that, for any $\ell >0$, $f_n \leq 1+ \ell$ for all large $n$ a.s.
Consider now the rest of subsequences for which $\liminf_n \alpha_{\eps n,n} > 0$ and rewrite (\ref{palawan}) as
\begin{align*}
1 \leq \frac{\psi \left( \frac{\alpha_{\eps n, n}}{f_{\eps n}} \left( 1 - \frac{\zeta}{ \alpha_{\eps n, n} } \right)  \right)}{\psi(\alpha_{\eps n, n}) \left( 1 - \frac{\zeta}{\alpha_{\eps n, n}} \right)} .
\end{align*}
As above for $e_{(1-\eps)n}$, we assume $f_{\eps n} > 1 + \ell$ infinitely often, and restrict ourselves to a further subsequence where this holds for all $n$. Then,
\begin{align*} 
1 \leq \frac{\psi \left( \frac{\alpha_{\eps n, n}}{1 + \ell} \right)}{\psi(\alpha_{\eps n, n}) \left( 1 - \frac{\zeta}{\alpha_{\eps n, n}} \right)} .
\end{align*}
From the boundedness of $\alpha_{\eps n,n}$ (see Appendix~\ref{app:boundedness}), we can take a converging (further) subsequence over which $\alpha_{\eps n, n} \to \alpha_0 > 0$. In the limit,
\begin{align*} 
\psi (\alpha_0) \left( 1 - \frac{\zeta}{\alpha_0} \right) \leq  \psi \left( \frac{\alpha_0}{1 + \ell}  \right)
\end{align*}
which is contradictory for sufficiently small $\zeta$. Thus, necessarily $f_{\eps n} \leq 1+\ell$ for all large $n$ a.s., unless we have $e_{(1-\eps)n)} \geq f_{\eps n}$ in which case, as shown above, $f_{\eps n}\leq e_{(1-\eps)n)} \leq 1 + \ell$ for all large $n$ a.s.

Altogether, we necessarily have
\begin{align}
\max\{ e_{(1-\eps)n}, f_{\eps n} \} \leq 1+\ell
\notag
\end{align}
for all large $n$ a.s. All the same, by reverting the inequalities, we prove that, for all large $n$ a.s.
\begin{align}
\min\{ e_1, f_1 \} \geq 1 - \ell
\notag
\end{align}
and therefore, altogether,
\begin{align}
 \max_{1\leq i \leq (1-\epsilon)n} | e_i -1 | & \leq \ell \notag \\
 \max_{1\leq i \leq \epsilon n} | f_i -1 | & \leq \ell \notag 
\end{align}
for all large $n$ a.s., which eventually proves \eqref{max_ei} and \eqref{max_fi} by taking a countable sequence of $\ell$ going to zero. This establishes the main result, from which Theorem \ref{thm1} unfolds. Specifically, from \eqref{max_ei}-\eqref{max_fi} and by uniform boundedness of $\gammaEq$ and $\alphaEqi$,
\begin{align*}
 \max_{1\leq i \leq (1-\eps)n} | v(d_i) - v(\gammaEq) | & \toas 0 \notag \\
 \max_{1\leq i \leq \eps n} | v(b_i) - v(\alphaEqi) | & \toas 0 \notag  .
\end{align*}
Thus, for any $\ell >0$ and for all large $n$ a.s.
\begin{align*}
(1-\ell) \Shat \preceq \Chat \preceq (1+\ell) \Shat
\end{align*}
and, therefore $\| \Chat - \Shat \| \leq  2 \ell \| \Shat \|$. Using the triangle inequality and the fact that $v$ is non-increasing, we have
\begin{align*}
& \| \Chat - \Shat \|   \notag \\
& \leq 2 \ell \, v(0) \left( \left\| \frac{1}{n} \sum\nolimits_{i=1}^{(1-\varepsilon_n)n}  \x_i \x_i^\dagger  \right\|  + \left\| \frac{1}{n} \sum\nolimits_{i=1}^{\varepsilon_n n} \a_i \a_i^\dagger  \right\| \right) .
\end{align*}
From \cite{Bai1998} and Assumption \ref{as:cN}, $\| \frac{1}{n} \sum\nolimits_{i=1}^{(1-\varepsilon_n)n}  \x_i \x_i^\dagger  \| < 4 (1-\varepsilon)$ for all large $n$ a.s. and, from Assumption \ref{as:C}, $\limsup_n \| \frac{1}{n} \sum\nolimits_{i=1}^{\varepsilon_n n} \a_i \a_i^\dagger  \|  < \infty$. Then, since $\ell$ is arbitrarily small, $\| \Chat - \Shat \|$ tends to zero a.s. as $n \to \infty$, which concludes the proof of Theorem~\ref{thm1}. For $\C \neq \I$ we simply need to show $\| \Chalf (\Chat - \Shat) \Chalf \| \toas 0$, which follows from $ \| \Chalf (\Chat - \Shat) \Chalf \| \leq \| \C \| \| \Chat - \Shat \|$ since, by assumption, $\limsup_N \| \C \| < \infty$.


\bigskip

For the random outliers scenario, Assumption \ref{as:C} holds a.s. by virtue of \cite{Bai1998}, provided that $\limsup_N \| \D \C^{-1} \| < \infty$. Then, the proof of Corollary~\ref{cor1} follows from applying standard random matrix arguments to the model of $\Shat$ in Theorem~\ref{thm1}, considered now as a random matrix in both $\y_i$ and $\a_i$. The result may be straightforwardly obtained from, e.g., \cite[Thm.~1]{Wagner2012} (see Appendix~\ref{app:RandomMatrixResults} for similar applications).

\section{Random Matrix Results}
\label{app:RandomMatrixResults}

In this section we list several intermediary results needed in Appendix~\ref{app:det}.

\begin{lemma}
Let Assumptions \ref{as:C}-\ref{as:cN} hold. Define
\begin{align}
\An \triangleq  \frac{1}{n} \sum_{j=1}^{(1-\eps)n} v\left( \gammaEq  \right) \x_j \x_j^\dagger
 + \frac{1}{n} \sum_{j=1}^{\eps n} v\left( \alphaEqj  \right) \a_j \a_j^\dagger   
 \notag
\end{align}
and $\Ani = \An - \frac1n v(\gammaEq) \x_i \x_i^\dagger$, with $\gammaEq$ and $\alphaEqj$ given in Theorem \ref{thm1}. Then, as $n \to \infty$,
\begin{align}
\max_{1 \leq i \leq \eps n}  \left|  \frac{1}{N} \x_i^\dagger \AniI \x_i      - \gammaEq \right| \toas 0 .
\notag
\end{align}
\label{lemma1}
\end{lemma}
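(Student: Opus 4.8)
The plan is to run the standard resolvent argument: decouple $\x_i$ from $\AniI$ through independence, replace the quadratic form by a normalized trace, remove the rank-one gap between $\Ani$ and $\An$, and finally match the deterministic equivalent of $\frac1N\tr\AnI$ with $\gammaEq$. Since we work under the reduction $\C=\I$ of Appendix~\ref{app:det}, we have $\An = v(\gammaEq)\frac1n\sum_{j=1}^{(1-\eps)n}\x_j\x_j^\dagger + \A$ with $\A \triangleq \frac1n\sum_{j=1}^{\eps n} v(\alphaEqj)\a_j\a_j^\dagger$ a fixed positive semidefinite deterministic matrix, and $\Ani = \An - \frac1n v(\gammaEq)\x_i\x_i^\dagger$ (the index $i$ running over the legitimate samples $1,\ldots,(1-\eps)n$).

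First I would record two structural facts. Since $\Ani$ omits the term carrying $\x_i$, the vector $\x_i$ is independent of $\Ani$. Moreover, Assumption~\ref{as:cN} gives $c<1-\eps$, so for all large $n$ the legitimate sample-covariance block of $\Ani$ is built from more than $N$ vectors; by \cite{Bai1998} its smallest eigenvalue is bounded away from zero a.s., and adding the positive semidefinite $\A$ can only increase eigenvalues, whence $\limsup_n\max_i\|\AniI\|<\infty$ a.s. This simultaneously makes every resolvent well defined at $z=0$ and supplies the uniform operator-norm control needed below.

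Next I would apply a maximal trace lemma. Conditionally on $\Ani$, the quadratic form $\frac1N\x_i^\dagger\AniI\x_i$ concentrates around $\frac1N\tr\AniI$; invoking the finite $(8+\eta)$-th moment hypothesis, a high-order Markov bound on the deviation together with a union bound over the $\O(n)$ indices and Borel--Cantelli yields
\begin{align*}
\max_{1\le i\le (1-\eps)n}\left|\tfrac1N\x_i^\dagger\AniI\x_i-\tfrac1N\tr\AniI\right|\toas 0,
\end{align*}
the $8+\eta$ moment being precisely what upgrades single-index convergence to convergence of the maximum. A rank-one perturbation estimate, e.g.\ \cite[Lemma~2.6]{Silverstein1995a}, then gives $|\frac1N\tr\AniI-\frac1N\tr\AnI|\le C/N$ uniformly in $i$ (with $C$ controlled by $\max_i\|\AniI\|$), so this gap vanishes as well. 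It remains to identify $\frac1N\tr\AnI$. Viewing $\An$ as a sample covariance matrix with population covariance $v(\gammaEq)\I$ formed from $(1-\eps)n$ samples, shifted by the deterministic $\A$, the deterministic-equivalent theorem \cite[Thm.~1]{Wagner2012} gives $\frac1N\tr\AnI-\frac1N\tr\mathbf{T}\toas0$ with $\mathbf{T}=(\frac{(1-\eps)v(\gammaEq)}{1+e}\I+\A)^{-1}$ and $e=\frac{v(\gammaEq)}n\tr\mathbf{T}$. Substituting the candidate $e=\c\,v(\gammaEq)\gammaEq$ turns the equation for $\mathbf{T}$ into $\mathbf{T}=(\frac{(1-\eps)v(\gammaEq)}{1+\c v(\gammaEq)\gammaEq}\I+\A)^{-1}$, whose normalized trace is exactly the defining relation \eqref{gammaThm} for $\gammaEq$ (with $\C=\I$ and $\c\to c$); by uniqueness of the deterministic equivalent, $\frac1N\tr\mathbf{T}=\gammaEq+o(1)$. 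Chaining the three displays by the triangle inequality proves the claim.

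The genuinely delicate point is evaluating the resolvent at $z=0$: the deterministic-equivalent results are stated for $z\in\mathbb{C}^+$, so one must show that $0$ sits strictly to the left of the spectrum of $\An$---uniformly in $n$---and then extend the estimate by continuity up to the real edge. Establishing this uniform separation from the support, hand in hand with the uniformity in the maximal trace lemma, is where the real work lies; once those are in place, the algebraic matching with $\gammaEq$ is immediate from the shape of \eqref{gammaThm}.
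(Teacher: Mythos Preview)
Your proposal is correct and follows essentially the same route as the paper: bound the smallest eigenvalue of $\Ani$ away from zero uniformly in $i$, apply a trace lemma plus union bound and Borel--Cantelli (using the $(8+\eta)$-th moment) to control the maximum quadratic-form deviation, close the rank-one gap via \cite[Lemma~2.6]{Silverstein1995a}, and identify the deterministic equivalent of $\frac1N\tr\AnI$ with $\gammaEq$ through \cite[Thm.~1]{Wagner2012}. The only notable refinement in the paper is that it makes the ``delicate point'' you flag fully rigorous by introducing the truncation indicator $\kappa_i=\ind_{\{\lambda_1(\Ani)>\xi\}}$ before taking expectations in the moment bounds, rather than relying directly on the almost-sure operator-norm control; this is a standard device you would need to spell out to make the Markov/Borel--Cantelli step airtight.
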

\begin{proof}
We first need to establish a result on $\lambda_1 (\Ani)$, for which we know that $\lambda_1 (\Ani) \geq \lambda_1 (  v(\gammaEq) \frac{1}{n} \sum_{j \neq i}  \x_j \x_j^\dagger )$. Then, \cite[Lemma~1]{Couillet2014a} along with Assumption~\ref{as:cN} and the boundedness of $\gammaEq$ show that there exists $\xi > 0$ such that, for all large $n$ a.s.,
\begin{align}
\min_{1 \leq i \leq (1-\eps) n} \lambda_1 \left( \Ani \right) > \xi .
\label{eq:lambda1}
\end{align} 
With this acquired, the outline of the proof is divided into two main steps. We first prove that $\max_{1 \leq i \leq (1-\eps) n}  |  \frac1N \x_i^\dagger \AniI \x_i - \frac1N \tr \AnI | \toas 0$ using quadratic form-close-to-the trace and rank-one perturbation arguments. Then, using \cite[Thm~1]{Wagner2012}, we show that $ \left| \frac1N \tr \AnI - \gammaEq \right| \toas 0$.

The triangle inequality allows us to write
\begin{align} 
&\left|  \frac{1}{N} \x_i^\dagger \AniI \x_i - \frac1N \tr \, \AnI \right| \leq \nonumber \\
&\left|  \frac{1}{N} \x_i^\dagger \AniI \x_i - \frac{1}{N} \tr \, \AniI  \right| + \left| \frac{1}{N} \tr \AniI  -  \frac{1}{N} \tr \AnI  \right| .
\label{triangle}
\end{align}
Let us bound the two terms on the right hand side of \eqref{triangle}. Denote by $\Ex$ the expectation with respect to $\x_i$ (i.e., conditionally on $\Ani$) and $\kappa_i \triangleq \ind_{\{\lambda_1 (\Ani) > \xi \}}$ with $\xi$ defined in \eqref{eq:lambda1}. For the first term, we can apply \cite[Lemma~B.26]{Bai2009} (since $\x_i$ is independent of $\kappa_i^{1/p} \AniI$), so that for $p \geq 2$,
\begin{align}
& \Ex \left[ \kappa_i \left|  \frac{1}{N} \x_i^\dagger \AniI \x_i - \frac{1}{N} \tr \AniI  \right|^p \right] \nonumber \\
& \leq \frac{\kappa_i K_p}{N^{p/2}} \left[     \left( \frac{\nu_4}{N} \tr \left( \AniI \right)^2 \right)^{p/2} + \frac{\nu_{2p}}{N^{p/2}} \tr  \mathbf{F}_{N,(i)}^{-p}  \right]
\label{tracelemma}
\end{align}
for some constant $K_p$ depending only on $p$, with $\nu_{\ell}$ any value such that ${\rm{E}} \left[ |x_{ij} |^{\ell} \right] \leq \nu_{\ell}$. Using $\frac{1}{N^k} \tr \mathbf{B}^k \leq \left( \frac{1}{N} \tr \mathbf{B} \right)^k$ for $\mathbf{B} \in \mathbb{C}^{N \times N}$ nonnegative definite and $k \geq 1$ leads to
\begin{align} \label{bound1}
& \Ex \left[ \kappa_i \left|  \frac{1}{N} \x_i^\dagger \AniI \x_i - \frac{1}{N} \tr \AniI  \right|^p \right] \nonumber \\
& \leq \frac{\kappa_i K_p}{N^{p/2}}   \left( \nu_4^{p/2} + \nu_{2p} \right) \left( \frac{1}{N} \tr \mathbf{F}_{N,(i)}^{-2} \right)^{p/2} \nonumber \\
& \leq  \frac{K_p}{\xi^p N^{p/2}}  \left( \nu_4^{p/2} + \frac{\nu_{2p}}{N^{p/2 -1}} \right)
\end{align}
where for the second inequality we have used $\tr \mathbf{B} \leq \| \mathbf{B} \|$ for $\mathbf{B} \in \mathbb{C}^{N \times N}$ nonnegative definite and the fact that $\kappa_i \| \AniI \| < \xi^{-1}$, which holds from the definition of $\kappa_i$. The bound (\ref{bound1}) being irrespective of $\Ani$, we can now take the expectation over $\Ani$ to obtain
\begin{align} \label{bound1_final} 
 {\rm{E}} \left[ \kappa_i \left|  \frac{1}{N} \x_i^\dagger \AniI \x_i - \frac{1}{N} \tr \AniI  \right|^p \right] 
 &=  \O\left(\frac{1}{N^{p/2}} \right) .
\end{align}  

For the second term in (\ref{triangle}), we can write $\Ani = ( \Ani - \frac{\xi}{2} \I ) + \frac{\xi}{2} \I$ with $\Ani - \frac{\xi}{2} \I \succ \0$ and we have from \cite[Lemma~2.6]{Silverstein1995a} (rank-one perturbation lemma)
\begin{align} \label{bound2_final}
 {\rm{E}} \left[ \kappa_i \left| \frac{1}{N} \tr \AniI  -  \frac{1}{N} \tr \AnI  \right|^p \right] 
 & \leq \frac{1}{N^p} \left( \frac{2}{\xi} \right)^p .
\end{align} 

From \eqref{triangle}, we can now use H\"older's inequality and the bounds \eqref{bound1_final}--\eqref{bound2_final} to obtain
\begin{align} \label{bound_final}
 {\rm{E}} \left[ \kappa_i \left|  \frac{1}{N} \x_i^\dagger \AniI \x_i - \frac{1}{N} \tr \AnI  \right|^p \right] 
 &= \O\left(\frac{1}{N^{p/2}} \right) . 
\end{align}
Then, we have that
\begin{align}
& \Pr \left[ \max_{1 \leq i \leq (1-\eps)n} \kappa_i^{1/p} \left|  \frac{1}{N} \x_i^\dagger \AniI \x_i   - \frac{1}{N} \tr \AnI \right| > \zeta  \right]  \notag \\
& \leq \sum_{i=1}^{(1-\eps)n} \Pr \left[ \kappa_i^{1/p} \left|  \frac{1}{N} \x_i^\dagger \AniI \x_i      - \frac{1}{N} \tr \AnI \right| > \zeta \right] \nonumber \\
& \leq  \frac{(1-\eps) n}{\zeta^p}  {\rm{E}} \left[ \kappa_i \left|  \frac{1}{N} \x_i^\dagger \AniI \x_i - \frac{1}{N} \tr \AnI \right|^p \right] \nonumber \\
&=  \O\left(\frac{1}{N^{p/2 - 1}} \right)
\notag
\end{align}
where we have used (in order) Boole's inequality, Markov's inequality, and (\ref{bound_final}). Recall from (\ref{tracelemma}) that the entries of $\x_i$ are required to have finite $2p$-th order moment and that, by our initial assumption, ${\rm{E}} [ | x_{ij} |^{8+\eta} ] < \infty$ for some $\eta > 0$. Then, taking $p > 4$, the Borel Cantelli lemma along with the fact that $\min_{1 \leq i \leq (1-\eps)n} \kappa_i \toas 1$ ensure
\begin{align}
 \max_{1 \leq i \leq (1-\eps)n}  \left|  \frac{1}{N} \x_i^\dagger \AniI \x_i      - \frac{1}{N} \tr \AnI \right| \toas 0 .
 \label{conv1}
\end{align}

It remains to show that $\gammaEq$ is a deterministic equivalent for $\frac1N \tr \AnI$. From \eqref{eq:lambda1} and the fact that any subtraction of a nonnegative definite matrix cannot increase the smallest eigenvalue, we have that $\lambda_1 ( \An ) > \xi$ for all large $n$ a.s. Then, we can write $\An = ( \An - \frac{\xi}{2} \I ) + \frac{\xi}{2} \I$ with $\liminf_n \lambda_1 ( \An - \frac{\xi}{2} \I ) > 0$ a.s.\@ and we are in position to apply \cite[Thm.~1]{Wagner2012} which ensures
\begin{align}
 \left|  \frac{1}{N} \tr \AnI - \frac1N \tr \left( \frac{(1-\varepsilon) v(\gammaEq)}{1+e_N} \I + \A \right)^{-1} \right| \toas 0  \notag
\end{align}
where $\A = \frac{1}{n} \sum_{j=1}^{\eps n} v\left( \alphaEqj  \right) \a_j \a_j^\dagger$ and $e_N$ is the unique positive solution to
\begin{align*}
e_N = \c v(\gammaEq) \frac1N \tr \left( \frac{(1-\varepsilon) v(\gammaEq)}{1+e_N} \I + \A \right)^{-1} . 
\end{align*}
According to the definition of $\gammaEq$, $e_N = \c v(\gammaEq) \gammaEq$ with $\gammaEq$ the solution to
\begin{align*}
\gammaEq =  \frac1N \tr \left( \frac{(1-\varepsilon) v(\gammaEq)}{1+\c v(\gammaEq) \gammaEq} \I + \A \right)^{-1}
\end{align*}
which has been proven to be unique. Altogether,
\begin{align}
 \left|  \frac{1}{N} \tr \AnI - \gammaEq \right| \toas 0 .
 \label{conv2}
\end{align}
Combining \eqref{conv1} and \eqref{conv2} concludes the proof.
\end{proof}

\begin{lemma}
Let Assumptions~\ref{as:C}-\ref{as:cN} hold and define
\begin{align*}
\Bni \triangleq  \frac{1}{n} \sum_{j = 1}^{(1-\eps)n}  v\left( \gammaEq  \right) \x_j \x_j^\dagger
 + \frac{1}{n} \sum_{j \neq i} v\left( \alphaEqj  \right) \a_j \a_j^\dagger 
\end{align*}
with $\gammaEq$ and $\alphaEqj$ defined as in Theorem~\ref{thm1}. Then, as $n \to \infty$,
\begin{align}
\max_{1 \leq i \leq \eps n}  \left|  \frac{1}{N} \a_i^\dagger \BniI \a_i      - \alphaEqi \right| \toas 0 .
\notag
\end{align}
\label{lemma2}
\end{lemma}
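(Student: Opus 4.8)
The plan is to follow the same template as the proof of Lemma~\ref{lemma1}, but to replace its central ``quadratic-form-close-to-the-trace'' step by an \emph{anisotropic} deterministic equivalent, since the vectors $\a_i$ are deterministic and therefore $\frac1N\a_i^\dagger\BniI\a_i$ does \emph{not} concentrate around $\frac1N\tr\BniI$. First I would control the smallest eigenvalue of $\Bni$: dropping the positive semidefinite outlier part gives $\Bni \succeq \frac{v(\gammaEq)}{n}\sum_{j=1}^{(1-\eps)n}\x_j\x_j^\dagger$, so that \cite[Lemma~1]{Couillet2014a} together with Assumption~\ref{as:cN} (which guarantees $1-\varepsilon>c$, i.e.\ the Gram part is eventually ``tall'') and the uniform boundedness of $\gammaEq$ away from zero yield a constant $\xi>0$ with $\min_{1\leq i\leq\eps n}\lambda_1(\Bni)>\xi$ for all large $n$ a.s. This furnishes both the invertibility and the uniform resolvent bound $\|\BniI\|<\xi^{-1}$ needed below.

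With this in hand, I would view $\Bni = \frac{v(\gammaEq)}{n}\sum_{j=1}^{(1-\eps)n}\x_j\x_j^\dagger + \mathbf{S}_{(i)}$ as a sample-covariance matrix (the only random part, with underlying covariance $v(\gammaEq)\I$ since $\C=\I$ after the reduction of the main proof) perturbed by the \emph{deterministic} matrix $\mathbf{S}_{(i)}\triangleq\frac1n\sum_{j\neq i}v(\alphaEqj)\a_j\a_j^\dagger$; no conditioning is required as the $\a_j$ are deterministic. I would then invoke a deterministic-equivalent result for bilinear forms (the anisotropic companion of \cite[Thm.~1]{Wagner2012}), applied with the bounded-norm vector $\a_i/\sqrt N$, to obtain that $\frac1N\a_i^\dagger\BniI\a_i$ is asymptotically equal to $\frac1N\a_i^\dagger\bar{\mathbf{Q}}_{(i)}\a_i$, where $\bar{\mathbf{Q}}_{(i)}=\bigl(\frac{(1-\varepsilon)v(\gammaEq)}{1+e_{N,(i)}}\I+\mathbf{S}_{(i)}\bigr)^{-1}$ and $e_{N,(i)}$ is the unique positive solution of $e_{N,(i)}=\c v(\gammaEq)\frac1N\tr\bar{\mathbf{Q}}_{(i)}$. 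The boundedness $\frac1N\a_i^\dagger\a_i\leq\xi'$ uniformly in $i$ needed here follows from Assumption~\ref{as:C} exactly as in the remark following it.

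It then remains to identify this deterministic equivalent with $\alphaEqi$. Comparing with the definition of $\alphaEqi$ (in its $\C=\I$ form), I would note that the only discrepancy is that $\mathbf{S}_{(i)}$ excludes the $i$-th outlier whereas the factor $\frac{(1-\varepsilon)v(\gammaEq)}{1+\c v(\gammaEq)\gammaEq}$ appearing in $\alphaEqi$ is governed by $\gammaEq$, which is built from the \emph{full} outlier matrix $\A$. Since $\A$ and $\mathbf{S}_{(i)}$ differ by a single rank-one term, a rank-one perturbation argument (\cite[Lemma~2.6]{Silverstein1995a}) shows $e_{N,(i)}=\c v(\gammaEq)\gammaEq+\O(1/n)$ uniformly in $i$, whence $\frac1N\a_i^\dagger\bar{\mathbf{Q}}_{(i)}\a_i=\alphaEqi+\O(1/n)$, giving the pointwise convergence for each $i$.

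Finally, to upgrade to the uniform statement $\max_{1\leq i\leq\eps n}|\frac1N\a_i^\dagger\BniI\a_i-\alphaEqi|\toas0$, I would mimic Lemma~\ref{lemma1}: establish the bilinear-form estimate with a polynomial moment rate $\O(N^{-p/2})$ for $p>4$ (obtained by a martingale decomposition over the $(1-\eps)n$ independent vectors $\x_j$ together with rank-one perturbation bounds, exploiting the control $\|\BniI\|<\xi^{-1}$), then combine Boole's and Markov's inequalities over the $\eps n=\O(N)$ indices with the Borel--Cantelli lemma, which is legitimate under the $(8+\eta)$-th moment hypothesis on the entries of the $\x_j$'s. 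The main obstacle is precisely this step: unlike in Lemma~\ref{lemma1}, where \cite[Lemma~B.26]{Bai2009} delivered the quadratic-form concentration for the random isotropic $\x_i$, here the determinism of $\a_i$ forces a genuine anisotropic bilinear-form concentration, with an explicit rate strong enough to survive a union bound over a number of outliers growing linearly in $N$.
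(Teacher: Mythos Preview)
Your proposal is correct and follows essentially the same route as the paper: lower-bound $\lambda_1(\Bni)$ via the Gram part, establish an anisotropic bilinear-form concentration for $\frac1N\a_i^\dagger\BniI\a_i$ with polynomial moment rate $\O(N^{-p/2})$, and close with Boole--Markov--Borel--Cantelli over the $\eps n$ indices using the indicator $\kappa_i=\ind_{\{\lambda_1(\Bni)>\xi\}}$. The paper is simply terser, invoking \cite[Lemma~3]{Hachem2013} directly for the bound ${\rm E}\bigl[\kappa_i\bigl|\frac1N\a_i^\dagger\BniI\a_i-\alphaEqi\bigr|^p\bigr]=\O(N^{-p/2})$ without spelling out your intermediate deterministic-equivalent identification and the rank-one perturbation matching $e_{N,(i)}$ to $c v(\gammaEq)\gammaEq$; those details are precisely what that reference packages.
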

\begin{proof}
Since $\lambda_1 (\Bni) \geq \lambda_1 ( v( \gammaEq ) \frac{1}{n} \sum_{j=1}^{(1-\eps)n}  \x_j \x_j^\dagger )$, we can use \cite[Lemma~1]{Couillet2014a} along with Assumption~\ref{as:cN} and the uniform boundedness of $\gammaEq$ to show that there exists $\xi > 0$ such that, for all large $n$ a.s.
\begin{align}
\min_{1 \leq i \leq \eps n} \lambda_1 \left( \Bni \right) > \xi .
\notag
\end{align}
Denote $\kappa_i \triangleq \ind_{\{\lambda_1 (\Bni) > \xi \}}$.
Using similar derivations as for \cite[Lemma 3]{Hachem2013} adapted to the present model, we have
\begin{align} \label{bound_final_lemma2}
 {\rm{E}} \left[ \kappa_i \left|  \frac{1}{N} \a_i^\dagger \BniI \a_i - \alphaEqi  \right|^p \right] 
 & = \O\left(\frac{1}{N^{p/2}} \right) .
\end{align}
Then
\begin{align*}
&\Pr \left[ \max_{1 \leq i \leq \eps n} \kappa_i^{1/p} \left|  \frac{1}{N} \a_i^\dagger \BniI \a_i  - \alphaEqi \right| > \zeta  \right] \nonumber \\ 
&\leq \sum_{i=1}^{\eps n} \Pr \left[ \kappa_i^{1/p} \left|  \frac{1}{N} \a_i^\dagger \BniI \a_i      - \alphaEqi \right| > \zeta \right] \\
&\leq  \frac{\eps n}{\zeta^p}  {\rm{E}} \left[ \kappa_i \left|  \frac{1}{N} \a_i^\dagger \BniI \a_i - \alphaEqi \right|^p \right] \\
&=  \O\left(\frac{1}{N^{p/2-1}} \right)
\end{align*}
where we used (in order) Boole's inequality, Markov's inequality, and \eqref{bound_final_lemma2}. Taking $p > 4$, the Borel Cantelli lemma ensures
\begin{align*}
 \max_{1 \leq i \leq \eps n} \kappa_i^{1/p}  \left|  \frac{1}{N} \a_i^\dagger \BniI \a_i      - \alphaEqi \right| \toas 0
\end{align*}
which then proves Lemma~\ref{lemma2} using $\min_{1 \leq i \leq \eps n} \kappa_i \toas 1$.
\end{proof}



\section{Asymptotic moments}
\label{app:moments}

In this last appendix, we derive the moments of the deterministic equivalents studied in \cite{Wagner2012}. We provide in full the generic result, which may be used for independent purposes. We first recall \cite[Thm.~1]{Wagner2012}.

\begin{theorem}[Wagner et al., \cite{Wagner2012}]
Let $\Y \in \mathbb{C}^{N \times n}$ have independent
columns $\y_i = \H_i \x_i$, where $\x_i \in \mathbb{C}^{N_i}$ has i.i.d.\@ entries of zero mean, variance $1/n$, and $4 + \eta$ moment of order $\O(1/n^{2+\eta /2})$, and $\H_i \in \mathbb{C}^{N \times N_i}$ such that
$\R_i \triangleq \H_i \H_i^\dagger$ has uniformly bounded spectral norm over $n, N$. Let also $\A \in \mathbb{C}^{N \times N}$ be Hermitian non-negative and denote $\An = \Y \Y^\dagger + \A$. Then, as $N$, $N_1,\ldots, N_n$, and $n$ grow large with ratios $c_i = N_i / n$, and $c_0 = N/n$ satisfying $0 <
\liminf_n c_i \leq \limsup_n c_i < \infty$ for $0 \leq i \leq n$, we have
\begin{align}
\frac1n \tr \left( \An - z \I \right)^{-1} - m_N(z) \toas 0
\notag
\end{align}
with
\begin{align} \label{stieltjies_app}
 &  m_N(z) = \frac1n
\tr \left( \frac1n \sum_{i=1}^n \frac{1}{1+e_{N,i}(z)} \R_i + \A - z \I  \right)^{-1} 
\end{align}
where $e_{N,1}(z),\ldots, e_{N,n}(z)$ form the unique solution of
\begin{align}
e_{N,j}(z) = \frac1n \tr \R_j  \left( \frac1n \sum_{i=1}^n \frac{1}{1  +  e_{N,i}(z)} \R_i   +   \A - z \I   \right)^{-1} \notag
\end{align}
such that all $e_{N,j}(z)$ are Stieltjes transforms of a non-negative finite measure on $\mathbb{R}^+$.
\label{moments:thm1}
\end{theorem}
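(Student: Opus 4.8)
The plan is to follow the now-standard deterministic-equivalent machinery for the resolvent, treating the per-column covariances $\R_i$ through the coupled system of $n$ scalar fixed-point equations. Throughout, fix $z\in\mathbb{C}^+$ and write $\Q\triangleq(\An-z\I)^{-1}$ for the resolvent and $\bar\Q\triangleq\left(\frac1n\sum_{i=1}^n\frac{1}{1+e_{N,i}}\R_i+\A-z\I\right)^{-1}$ for its candidate deterministic equivalent; since $\Im[z]>0$ and $\A,\R_i\succeq\0$, both inverses exist with $\|\Q\|,\|\bar\Q\|\leq 1/\Im[z]$. As a preliminary, self-contained step I would establish existence and uniqueness of $(e_{N,1},\ldots,e_{N,n})$ within the class of Stieltjes transforms of non-negative finite measures: the map sending $(e_i)_i$ to $\left(\frac1n\tr\R_j\bar\Q\right)_j$ leaves the appropriate product of Stieltjes cones invariant and is a contraction for the metric $\max_i|e_i-e_i'|$, the contraction bound following from the resolvent identity applied to $\bar\Q-\bar\Q'$ together with the uniform bound on $\|\R_i\|$; analyticity in $z$ and the non-negative-measure constraint then single out the physical solution.

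The core probabilistic input is the concentration of the quadratic forms $\y_i^\dagger\Q_{(i)}\y_i$, where $\Q_{(i)}\triangleq(\An-\y_i\y_i^\dagger-z\I)^{-1}$ is the leave-one-out resolvent, independent of $\y_i$. Writing $\y_i=\H_i\x_i$ and applying the trace lemma \cite[Lemma~B.26]{Bai2009} to $\mathbf{M}=\H_i^\dagger\Q_{(i)}\H_i$, the unit-variance (scaled $1/n$) entries give $\y_i^\dagger\Q_{(i)}\y_i\approx\frac1n\tr\R_i\Q_{(i)}$, the $(4+\eta)$-moment hypothesis controlling the $p$-th moment of the error at the requisite rate. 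The rank-one perturbation lemma \cite[Lemma~2.6]{Silverstein1995a} then replaces $\Q_{(i)}$ by $\Q$ inside the trace at cost $\O(1/n)$, and the Sherman--Morrison identity $\Q\y_i=\Q_{(i)}\y_i/(1+\y_i^\dagger\Q_{(i)}\y_i)$ supplies the algebraic relation linking $\Q$ to the leave-one-out resolvents.

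To prove $\frac1n\tr\Q-\frac1n\tr\bar\Q\toas0$ I would expand the resolvent difference
\begin{align*}
\bar\Q-\Q=\bar\Q\left(\sum_{i=1}^n\y_i\y_i^\dagger-\frac1n\sum_{i=1}^n\frac{\R_i}{1+e_{N,i}}\right)\Q
\end{align*}
and, using $\bar\Q\y_i\y_i^\dagger\Q=\bar\Q\y_i\y_i^\dagger\Q_{(i)}/(1+\y_i^\dagger\Q_{(i)}\y_i)$ term by term, substitute the estimates above so that $1+\y_i^\dagger\Q_{(i)}\y_i\approx1+e_{N,i}$ and $\y_i\y_i^\dagger\approx\frac1n\R_i$ in trace against a deterministic matrix. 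Tracing against $\frac1n\I$, this shows $\frac1n\tr\mathbb{E}[\Q]-\frac1n\tr\bar\Q\to0$; a separate concentration of the scalar trace about its mean (a martingale-difference/Azuma-type bound over the columns of $\Y$, again controlled by the moment hypothesis) upgrades this to almost sure convergence via Borel--Cantelli. Reading off the stated $m_N(z)$ from the definition of $\bar\Q$ then completes the argument.

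The main obstacle I anticipate is the \emph{uniform} control over the $n$ coupled weights $e_{N,i}$: unlike the single-population SCM, here each column carries its own $\R_i$ and $e_{N,i}$, and one must prevent the individual approximation errors from accumulating when summed over $i$. This requires a uniform-in-$i$ lower bound on $|1+e_{N,i}|$ (equivalently, keeping $\frac1n\sum_i\R_i/(1+e_{N,i})+\A-z\I$ uniformly away from singularity) and a self-improving estimate on $\max_i|\y_i^\dagger\Q_{(i)}\y_i-e_{N,i}|$ that closes the fixed-point loop. The delicate point is precisely this self-reference: $e_{N,i}$ is defined through the very deterministic equivalent one is trying to validate, so the bootstrap must be set up so that the coupled system is contractive before the concentration estimates are invoked.
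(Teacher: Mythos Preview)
The paper does not prove this statement: Theorem~\ref{moments:thm1} is explicitly introduced with ``We first recall \cite[Thm.~1]{Wagner2012}'' and is simply quoted from \cite{Wagner2012} as an auxiliary result, with no proof supplied here. There is therefore nothing in the paper to compare your proposal against.

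That said, your sketch is a faithful outline of the standard deterministic-equivalent argument that underlies \cite{Wagner2012}: resolvent difference $\bar\Q-\Q$, Sherman--Morrison to extract the leave-one-out resolvent, the trace lemma \cite[Lemma~B.26]{Bai2009} for the quadratic forms, rank-one perturbation \cite[Lemma~2.6]{Silverstein1995a}, and a martingale concentration for the trace. The obstacle you flag (uniform control over the $n$ coupled $e_{N,i}$ and the self-referential bootstrap) is exactly the technical novelty of \cite{Wagner2012} relative to single-population results, and is handled there by working first with the intermediate quantities $\frac1n\tr\R_j\Q$ in place of the $e_{N,j}$, establishing their convergence to the fixed point via a stability argument on the implicit system. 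If you wanted to make your sketch self-contained, that intermediate step is what would need to be fleshed out; but for the purposes of this paper, citing \cite{Wagner2012} is all that is done and all that is required.
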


From Theorem \ref{moments:thm1}, the distribution function $F_N$ with Stieltjes transform $m_N(z)$ is a deterministic equivalent for the eigenvalue distribution of $\An$. We next describe the successive moments of the distribution function $F_N$. This generalizes the asymptotic moment results in \cite{Hoydis2011}, valid only for $\A = \0$.

\begin{theorem}
Let $F_N$ be the distribution function associated with the Stieltjes transform (\ref{stieltjies_app}),
and denote $M_{N,0}, M_{N,1}, \ldots$ the successive moments of $F_N$, i.e.,
$M_{N,p} \triangleq \int x^p dF_N $.
Then,
\begin{align*}
M_{N,p} = \frac{(-1)^p}{p!} \frac1N \tr \T_p
\end{align*}
with $\T_0, \T_1, \ldots$ defined recursively from
\begin{align*}
\hspace{-3mm} \T_{p+1} &= - \hspace{-0.5mm} \sum_{i=0}^p \T_{p-i} \A \T_i \hspace{-0.8mm} + \hspace{-0.8mm} \sum_{i=0}^p \sum_{j=0}^i \hspace{-0.5mm} \binom{p}{i} \hspace{-0.5mm} \binom{i}{j} \T_{p-i} \Q_{i \hspace{-0.1mm} - \hspace{-0.1mm} j \hspace{-0.1mm} + \hspace{-0.1mm} 1} \T_{\hspace{-0.2mm} j}  \\
\hspace{-3mm} \Q_{p+1} &= \frac{p+1}{n} \sum_{k=1}^n f_{k,p} \R_k  \\
\hspace{-3mm} f_{k,p+1} &= \sum_{i=0}^p \sum_{j=0}^i \binom{p}{i} \binom{i}{j} (p-i+1) f_{k,j} f_{k,i-j} \deltaf_{k,p-i}  \\
\deltaf_{k,p+1} &= \frac1n \tr \left[ \R_k \T_{p+1} \right]  
\end{align*}
and $\T_0 = \I$, $f_{k,0} = -1$, $\deltaf_{k,0} = \frac1n \tr \R_k$ for $k \in \{ 1,\ldots, n \}$.
\label{moments:thm2}
\end{theorem}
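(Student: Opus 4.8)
The plan is to read the moments off the moment-generating expansion of the implicit Stieltjes transform. Because $\|\R_k\|$ and $\|\A\|$ are uniformly bounded, $F_N$ has compact support and is therefore determined by its moments, with $m_N(1/z)=-\sum_{p\ge0} z^{p+1}M_{N,p}$ for $|z|$ small; it thus suffices to expand $m_N(z)=\frac1N\tr\,\T(z)$ as an analytic function near $z=\infty$, where $\T(z)=\bigl(\A+\frac1n\sum_k \R_k/(1+e_{N,k}(z))-z\I\bigr)^{-1}$ and $e_{N,k}(z)=\frac1n\tr\,\R_k\T(z)$. Both are analytic at infinity ($e_{N,k}(z)\to0$, $\T(z)\sim -z^{-1}\I$), so all quantities admit genuine power series there.

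First I would pass to the variable $s=-1/z$ and to exponential generating functions (EGFs). Setting $\mathcal{T}(s)=\sum_p \T_p s^p/p!$, $\mathcal{F}_k(s)=\sum_p f_{k,p}s^p/p!$ and $D_k(s)=\frac1n\tr\,\R_k\mathcal{T}(s)=\sum_p \deltaf_{k,p}s^p/p!$, the scaling $\T(-1/s)=s\,\mathcal{T}(s)$ turns the fixed-point equation into the matrix identity $\mathcal{T}(s)^{-1}=\I+s\A-\mathcal{Q}(s)$ with $\mathcal{Q}(s)=\frac sn\sum_k \R_k\mathcal{F}_k(s)$, while $e_{N,k}(-1/s)=s D_k(s)$ and $1/(1+e_{N,k})=-\mathcal{F}_k(s)$, i.e. $\mathcal{F}_k(s)=-(1+sD_k(s))^{-1}$. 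Comparing $\frac1N\tr\,\T(z)$ with the moment expansion yields precisely $M_{N,p}=\frac{(-1)^p}{p!}\frac1N\tr\,\T_p$ together with the initialization $\T_0=\I$, $f_{k,0}=-1$, $\deltaf_{k,0}=\frac1n\tr\,\R_k$.

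The recursions then follow by differentiating these identities in $s$ and matching EGF coefficients. Writing $\mathcal{T}'=-\mathcal{T}(\mathcal{T}^{-1})'\mathcal{T}=-\mathcal{T}\A\mathcal{T}+\mathcal{T}\mathcal{Q}'\mathcal{T}$ produces the sandwiched form of $\T_{p+1}$, the two contributions coming from $\A$ and from $\mathcal{Q}'$ in $(\mathcal{T}^{-1})'=\A-\mathcal{Q}'$; the coefficients $\binom pi\binom ij$ are exactly the multinomial weights of the corresponding triple EGF products, and $\mathcal{Q}'$ supplies the shift turning $\mathcal{Q}$ into the $\Q_{i-j+1}$ appearing there. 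Likewise, differentiating $\mathcal{F}_k(1+sD_k)=-1$ gives $\mathcal{F}_k'=\mathcal{F}_k^2\,(sD_k)'$, and since the EGF coefficient of $(sD_k)'$ at order $m$ is $(m+1)\deltaf_{k,m}$, reading off coefficients reproduces the stated recursion for $f_{k,p+1}$ (the factor $p-i+1$ being this shift); finally $\mathcal{Q}(s)=\frac sn\sum_k\R_k\mathcal{F}_k(s)$ gives $\Q_{p+1}=\frac{p+1}n\sum_k f_{k,p}\R_k$, and $\deltaf_{k,p+1}=\frac1n\tr\,\R_k\T_{p+1}$ is immediate.

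I expect the main obstacle to be the self-consistent coupling between $\T$ and the scalars $e_{N,k}$: the coefficients are only available recursively, so the argument must be an induction on $p$ in which the order-$p$ coefficient of $\mathcal{T}$ feeds those of $D_k$ and $\mathcal{F}_k$, which fix $\Q_{p+1}$ and hence $\T_{p+1}$. Two points then need care: justifying that the formal EGF manipulations are legitimate operations on convergent Taylor series (guaranteed by the analyticity of $\T$ and $e_{N,k}$ at $z=\infty$, itself a consequence of the uniform norm bounds and compact support), and bookkeeping the factorial normalizations so that the binomial weights emerge in exactly the stated form.
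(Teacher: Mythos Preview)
Your approach—expanding the Stieltjes transform at $z=\infty$ and packaging the Taylor coefficients as exponential generating functions, then differentiating the fixed-point identities—is exactly the standard route and is essentially what the paper invokes: its own proof is the single sentence ``Follows the same steps as the proof of \cite[Thm.~2]{Hoydis2011} with proper modifications to account for $\A\neq\0$.'' So you have, in effect, reconstructed the intended argument rather than found an alternative.

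One bookkeeping point you should not gloss over. Your identity $\mathcal T'=-\mathcal T\A\mathcal T+\mathcal T\mathcal Q'\mathcal T$, read off at EGF order $p$, yields
\[
\T_{p+1}=-\sum_{i=0}^p\binom{p}{i}\T_{p-i}\A\T_i+\sum_{i=0}^p\sum_{j=0}^i\binom{p}{i}\binom{i}{j}\T_{p-i}\Q_{i-j+1}\T_j,
\]
i.e.\ with binomial weights in \emph{both} sums, whereas the statement as printed omits $\binom{p}{i}$ from the $\A$-term. For $p\le 1$ the two coincide (so the explicit $M_{N,1},M_{N,2}$ in Corollary~\ref{cor:moments} are unaffected), but they differ from $p=2$ onward. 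Since the $\A$-term is precisely the ``modification to account for $\A\neq\0$'' that the paper adds to the $\A=\0$ result of \cite{Hoydis2011}, this is plausibly a transcription slip in the statement rather than a flaw in your method; still, you should confirm by a direct check of $M_{N,3}$ (or against \cite{Hoydis2011}) which version is correct before writing up.
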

\begin{proof}
Follows the same steps as the proof of \cite[Thm.~2]{Hoydis2011} with proper modifications to account for $\A \neq \0$. 
\end{proof}

\footnotesize

\bibliographystyle{IEEEtran}
\bibliography{IEEEabrv,refs}

\end{document}